\newcommand{\define}{\textbf}
\newcommand{\isom}{\cong}
\renewcommand{\setminus}{\smallsetminus}
\renewcommand{\phi}{\varphi}
\renewcommand{\tilde}{\widetilde}
\newcommand{\C}{\mathbb{C}}
\newcommand{\Q}{\mathbb{Q}}
\newcommand{\N}{\mathbb{N}}
\newcommand{\Z}{\mathbb{Z}}
\renewcommand{\P}{\mathbb{P}}
\newcommand{\A}{\mathbb{A}}
\newcommand{\E}{\mathbb{E}}
\newcommand{\GG}{\mathcal{G}}
\newcommand{\HH}{\mathcal{H}}
\newcommand{\liep}{\mathfrak{p}}
\newcommand{\lieb}{\mathfrak{b}}
\newcommand{\J}{J}
\newcommand{\mJ}{\overline{J}^\circ}
\newcommand{\Fl}{Fl}
\newcommand{\BH}{\overline{H}}
\newcommand{\orb}{\mathrm{orb}}
\newcommand{\sing}{\mathrm{Sing}}
\newcommand{\sm}{\mathrm{Sm}}
\newcommand{\pt}{\mathrm{pt}}
\newcommand{\an}{\mathrm{an}}
\newcommand{\mm}{\mathbf{m}}
\newcommand{\ab}{\mathbf{a}}
\newcommand{\rr}{\mathbf{r}}
\newcommand{\te}{\tilde{e}}
\DeclareMathOperator{\Sym}{Sym}
\DeclareMathOperator{\codim}{codim}
\DeclareMathOperator{\rk}{rk}
\DeclareMathOperator{\Hom}{Hom}
\DeclareMathOperator{\Spec}{Spec}
\DeclareMathOperator{\ord}{ord}
\DeclareMathOperator{\Cont}{Cont}
\DeclareMathOperator{\id}{id}
\DeclareMathOperator{\Bl}{Bl}
\DeclareMathOperator{\lct}{lct}
\DeclareMathOperator{\SR}{SR}
\newtheorem{theorem}{Theorem}[section]
\newtheorem{lemma}[theorem]{Lemma}
\newtheorem{proposition}[theorem]{Proposition}
\newtheorem{corollary}[theorem]{Corollary}
\newtheorem{conjecture}[theorem]{Conjecture}
\theoremstyle{definition}
\newtheorem{definition}[theorem]{Definition}
\newtheorem{remark}[theorem]{Remark}
\newtheorem{example}[theorem]{Example}
\newtheorem{question}[theorem]{Question}
\newcommand{\excise}[1]{}
\begin{document}%%%%%%%%%%%%%%%%%%%%%%
%%%%%%%%%%%%%%%%%%%%%%%%%%%%%%%%%%%%%%

%%%%%%%%%%%%%%%%%%%%%%%%%%%%%%%%%%%%%%%%%%
\title[Arc spaces and equivariant cohomology]{Arc spaces and equivariant cohomology}
\author{Dave Anderson}
\address{Department of Mathematics\\University of Washington\\Seattle, WA 98195}
\email{dandersn@math.washington.edu}
\author{Alan Stapledon}
\address{Department of Mathematics\\University of British Columbia\\ BC, Canada V6T 1Z2}
\email{astapldn@math.ubc.ca}

\keywords{}

\date{August 3, 2011}
\thanks{This work was initiated and partially completed 
while the authors were graduate students 
at the University of Michigan.  
A.S. was funded in part with assistance from the Australian 
Research Council project DP0559325 at Sydney University, Chief Investigator Professor G I Lehrer.  
Part of this work was completed during the period D.A. was employed by the Clay Mathematics Institute as a Liftoff Fellow.  D.A. was also partially supported by NSF Grants DMS-0502170 and DMS-0902967.}

\subjclass[2000]{55N91, 14E99.}
% 55N91 Equivariant homology and cohomology
% 14E99 Birational geometry

\begin{abstract}
We present a new geometric interpretation of equivariant cohomology in which one replaces a smooth, complex $G$-variety $X$ by its associated arc space $J_{\infty} X$, with its induced $G$-action.  This not only allows us to obtain geometric classes in equivariant cohomology of arbitrarily high degree, but also provides more flexibility for equivariantly deforming classes and geometrically interpreting multiplication in the equivariant cohomology ring.  Under appropriate hypotheses, we obtain explicit bijections between $\Z$-bases for the equivariant cohomology rings of smooth varieties related by an equivariant, proper birational map.  We also show that self-intersection classes can be represented as classes of contact loci, under certain restrictions on singularities of subvarieties.

We give several applications.  Motivated by the relation between self-intersection and contact loci, we define higher-order equivariant multiplicities, generalizing the equivariant multiplicities of Brion and Rossmann; these are shown to be local singularity invariants, and computed in some cases.  We also present geometric $\Z$-bases for the equivariant cohomology rings of a smooth toric variety (with respect to the dense torus) and a partial flag variety (with respect to the general linear group).
\end{abstract}

\maketitle
\tableofcontents
%%%%%%%%%%%%%%%%%%%%%%%%%%%%%%%%%%%%%%%%%%%%%%%%%%%

%%%%%%%%%%%%%%%%%%%%%%%%%%%%%%%%%%%%%%%%%%%%%%%%%%%
\section{Introduction}%%%%%%%%%%%%%%%%%%%%%%%%%%%%%
%%%%%%%%%%%%%%%%%%%%%%%%%%%%%%%%%%%%%%%%%%%%%%%%%%%

Let $X$ be a smooth complex algebraic variety equipped with an action of a linear algebraic group $G$.  In this article, we consider two constructions associated to this situation.  The \emph{equivariant cohomology ring} $H_G^*X$ is an interesting and useful object encoding information about the topology of $X$ as it interacts with the group action; for example, fixed points and orbits are relevant, as are representations of $G$ on tangent spaces.  The \emph{arc space} of $X$ is the scheme $J_\infty X$ parametrizing morphisms $\Spec\C[[t]] \to X$; this construction is functorial, so $J_\infty G$ is a group acting on $J_\infty X$.  Except when $X$ is zero-dimensional, $J_\infty X$ is not of finite type over $\C$, but it is a pro-variety, topologized as a certain inverse limit.  Due to their connections with singularity theory \cite{ELMContact, EMInversion, MusJet} and their central role in motivic integration \cite{DLGerms, KonMotivic}, arc spaces have recently proved increasingly useful in birational geometry.  

The present work stems from a simple observation: The projection $J_\infty X \to X$ is a homotopy equivalence, and is equivariant with respect to $J_\infty G \to G$, so there is a canonical isomorphism $H_G^*X = H_{J_\infty G}^*J_\infty X$ (Lemma~\ref{l:homotopy}).  Very broadly, our view is that interesting classes in $H_G^*X$ arise from the $J_\infty G$-equivariant geometry of $J_\infty X$.  The purpose of this article is to initiate an investigation of the interplay between information encoded in $H_G^*X$ and in $J_\infty X$.

The philosophy we wish to emphasize is motivated by analogy with two notions from ordinary cohomology of (smooth) algebraic varieties.  First, interesting classes in $H^{2k}X$ come from subvarieties of codimension $k$.  We seek invariant subvarieties of codimension $k$ to correspond to classes in $H_G^{2k}X$.  Since $H_G^*X$ typically has nonzero classes in arbitrarily large degrees, however, $X$ must be replaced with a larger---in fact, infinite-dimensional---space.  The traditional approach to equivariant cohomology, going back to Borel, replaces $X$ with the \emph{mixing space} $\mathbb{E}G \times^G X$, which does not have a $G$-action; we will instead study $J_\infty X$, which is intrinsic to $X$ and on which $G$ acts naturally.

The second general notion is that cup product in $H^*X$ should correspond to transverse intersection of subvarieties.  In the $C^\infty$ category, of course, this is precise: any two subvarieties can be deformed to intersect transversely, and the cup product is represented by the intersection.  On the other hand, $X$ often has only finitely many $G$-invariant subvarieties, so in the equivariant setting, no such moving is possible within $X$ itself.  Replacing $X$ with $J_\infty X$, one gains much greater flexibility to move invariant cycles.

A new and remarkable feature of this approach is that in the case when $G$ is the trivial group, one obtains interesting results for both the ordinary cohomology ring of $X$ and the geometry of the arc spaces of subvarieties of $X$.

Our first main theorem addresses the first notion, and says that under appropriate hypotheses, $J_\infty G$-orbits in $J_\infty X$ determine a basis over the integers:

%\begin{theorem}%\label{t:basis}
\bigskip \noindent {\bf Theorem~\ref{t:basis}.} {\em
Let $G$ be a connected linear algebraic group acting on a smooth complex algebraic variety $X$, with $D\subseteq X$ a $G$-invariant closed subset such that $G$ acts on $X\setminus D$ with unipotent stabilizers.  Suppose $J_\infty X \setminus J_\infty D = \bigcup_j U_j$ is an \emph{equivariant affine paving}, in the sense of Definition~\ref{d:paving}.  Then 
\[
  H_G^*X = \bigoplus_j \Z\cdot [\overline{U}_j].
\]
}
\bigskip
%\end{theorem}

\noindent
When $G$ is a torus, the condition that $G$ act on $X\setminus D$ with unipotent stabilizers is a ``generic freeness'' hypothesis, and is automatic in many cases of interest.  (When $G$ is trivial, this reduces to a well-known fact about affine pavings; see, e.g., \cite[Appendix B, Lemma 6]{FulYT}.)

Since arc spaces are well-suited to the study of the birational geometry of $X$, one should also consider proper equivariant birational maps $f\colon Y \to X$.  When $X$ satisfies the conditions of Theorem~\ref{t:basis} and the paving is \emph{compatible} with $f$, we establish a geometric bijection between $\Z$-bases of $H_G^*X$ and $H_G^*Y$ (Corollary~\ref{c:birational}).  This result is new even in the case when $G$ is trivial.

%An intriguing example comes from the action of $GL_n$ on $n\times n$ matrices by left multiplication. 
%  In Section~\ref{s:gln}, we show that Theorem~\ref{t:basis} applies to this situation, using a paving defined in terms of contact loci with certain determinantal varieties (Corollary~\ref{c:glnbasis}).  The hypotheses for Theorems~\ref{t:multiplication}~and~\ref{t:smooth} on intersections of classes fail for these subvarieties,  %determinantal varieties are generally not l.c.i., and they have large singular sets---
%  but the conclusions appear to hold (Conjecture~\ref{c:gln}); it would be interesting to have a more general framework which explains this.  Finally, we apply our result concerning the behavior  of equivariant cohomology under birational maps (Corollary~\ref{c:birational}) to deduce a geometric basis for the $GL_n$-equivariant cohomology of a partial flag variety (Corollary~\ref{c:flag}). 

%%%%%%

In Sections~\ref{s:mult1}~and~\ref{s:mult2}, we address the second notion, and relate the cup product in $H_G^*X$ to intersections in $J_\infty X$.  The main results of these sections 
(Theorem~\ref{t:multiplication} and Theorem~\ref{t:smooth}) 
say that under certain restrictions on the singularities of $G$-invariant subvarieties, products of their equivariant cohomology classes are represented by \emph{multi-contact loci} in the arc space of $X$.  (Basic facts about contact loci are reviewed in \S\ref{arcjet}.)  %Our first theorem about multiplication requires the subvarieties to be \emph{equivariant local complete intersections} (see \S\ref{s:mult1}):

Throughout this article, we make frequent use of the \emph{jet schemes}
\[
  J_m V = \Hom(\Spec \C[t]/(t^{m + 1}), V),
\]
which may be considered finite-dimensional approximations to the arc space $J_\infty V$.
A key special case of our theorems about multiplication says that given a $G$-invariant subvariety $V\subseteq X$, provided its singularities are sufficiently mild, we have an equality
\begin{equation}\label{eq:magic}
  [V]^{m+1} = [J_mV]
\end{equation}
as classes in $H_G^*X = H_{J_mG}^*(J_mX)$.  This special case may be summarized a little more precisely as follows:

%\begin{theorem}
\bigskip \noindent {\bf Corollary~(of Theorems~\ref{t:multiplication} and \ref{t:smooth}).} {\em
Let $G$ be a connected reductive group, and fix an integer $m>0$.  %Assume either
\renewcommand{\theenumi}{\alph{enumi}}
\begin{enumerate}
\item Assume $X^G$ is finite, the natural map $\iota^*\colon H_G^*X \to H_G^*X^G$ is injective, and $V\subseteq X$ 
is an \emph{equivariant local complete intersection} (see \S\ref{s:mult1}), with $\codim(J_mV,J_mX) = (m+1)c$.  Then $[V]^{m+1} = [J_mV]$. \smallskip \label{mult-part1}
%\end{itemize}
%or
%\begin{itemize}
\item Assume $V \subseteq X$ is a connected $G$-invariant subvariety of codimension $c$, with $\codim(\sing(V),X)>(m+1)c$.  
Then $[V]^{m+1} = [\overline{J_m\sm(V)}]$. \label{mult-part2}
\end{enumerate}
%Then
%%\begin{equation}\label{eq:magic}
%  $[V]^{m+1} = [J_mV]$ 
%%\end{equation}
%%as classes in $H_G^*X = H_{J_mG}^*(J_mX)$, 
%whenever $\codim(J_mV,J_mX) = (m+1)c$. 
}
\bigskip
%\end{theorem}

\noindent
When $G$ is a torus, the assumptions on the fixed locus in Part~\eqref{mult-part1} are part of a standard package of hypotheses for localization theorems in equivariant cohomology; for more general groups, see Remark~\ref{r:q-coeff}.  Note that the statement in Part~\eqref{mult-part2} applies in particular to any smooth subvariety $V\subseteq X$.

Information about the singularities of $V$ is encoded in the geometry of its jet schemes, but these spaces are notoriously difficult to compute.  In fact, almost nothing is known about them, except when $V$ is a local complete intersection \cite{EMInversion, MusJet, MusSingularities}---in which case the sequence of dimensions $\{ \dim J_m V \}_{m \ge 0}$ determines the log canonical threshold of $V$ \cite[Corollary 0.2]{MusSingularities}---or when $V$ is a determinantal variety \cite{KSDeterminantal, YueJet}.  In particular, the corresponding class $[J_m V] \in H^*_G X$ is an important invariant.  When $X = \A^d$ and $G = (\C^*)^r$, this class is the \emph{multi-degree} of $J_m V$ (see Remark~\ref{r:multi}, \cite[Chapter~8]{MSCombinatorial}).

%%%%%%

%We now explain our approach and results in more detail.  Given an $G$-invariant subvariety
%$V \subseteq X$, the $m^{\textrm{th}}$ \emph{jet scheme} $J_m V$ is a scheme whose $\C$-valued points parametrize morphisms $\Spec \C[t]/(t^{m + 1}) \rightarrow V$.  The geometry of the jet schemes $\{ J_m V \}_{m \ge 0}$ encodes information about the singularities of $V$, but these spaces are notoriously difficult to compute.  In fact, almost nothing is known about them, except when $V$ is a local complete intersection  \cite{EMInversion, MusJet, MusSingularities}---in which case the dimensions $\{ \dim J_m V \}_{m \ge 0}$ determine the log canonical threshold of $V$ \cite[Corollary 0.2]{MusSingularities}---or when $V$ is a determinantal variety \cite{KSDeterminantal, YueJet}.  In particular, the corresponding class $[J_m V] \in H^*_G X$ (see Lemma~\ref{l:cohomology}) is an important invariant.  When $X = \A^d$ and $G = (\C^*)^r$, this class equals the \emph{multi-degree} of $J_m V$ (see Remark~\ref{r:multi}, \cite[Chapter~8]{MSCombinatorial}). 

Self-intersection is perhaps the most difficult part of intersection theory to interpret geometrically, because it requires some version of a moving lemma.  From an intersection-theoretic point of view, Equation~\eqref{eq:magic} gives a new geometric interpretation of the self-intersection $[V]^{m + 1}$, even in the case when $G$ is trivial and $V$ is smooth.  From the perspective of singularity theory, this gives a highly non-trivial calculation of the class $[J_m V]$ under suitable conditions.

%The full version of the theorem above considers $G$-invariant subvarieties $V_1,\ldots,V_s$ of $X$ and says that,  under certain restrictions on the action of $G$ on $X$ (which are standard in equivariant cohomology) and on the singularities of the $V_i$'s, products  $[V_1]^{m_1}\cdots [V_s]^{m_s}$ of their equivariant cohomology classes are represented by \emph{multi-contact loci} in the arc space of $X$.  (Basic facts about contact loci are reviewed in \S\ref{arcjet}.) We refer the reader to Theorem~\ref{t:multiplication} and Theorem~\ref{t:smooth} for precise statements, and for two different sets of hypotheses under which our result holds. %the first being a l.c.i. condition and the 

%%%%%

The above Corollary implies a relationship between the failure of Equation~\eqref{eq:magic} and the singularities of $V$.  To measure this discrepancy, in \S\ref{s:multiplicities} we introduce \emph{higher-order equivariant multiplicities}.  These generalize the equivariant multiplicities considered by Rossmann \cite{rossmann} and Brion \cite{BriChow}, among others; the latter have been used to study singularities of Schubert varieties \cite[\S6.5]{BriChow}, and are related to Minkowski weights on fans \cite{katz-payne}.  We prove the higher-order multiplicities are intrinsic to $V$ (Theorem~\ref{t:mult-invt}), and apply our main results to relate them with the ($0^\mathrm{th}$-order) multiplicities of Brion.

Our initial motivation for this work came from the theory of toric varieties.  By a theorem of Ishii, orbits of generic arcs in a toric variety $X$ are parametrized by the same set which naturally indexes a $\Z$-basis for $H_T^*X$, namely, points in the lattice $N$ of one-parameter subgroups of $T$.  In \S\ref{s:toric}, we give a geometric interpretation of this bijection by applying our results to extend it to an isomorphism of rings (Corollary~\ref{tv}), reproving the well-known fact that the equivariant cohomology of a smooth toric variety is isomorphic to the Stanley-Reisner ring of the corresponding fan.  We expect this intriguing picture to extend to a relation between \emph{equivariant orbifold cohomology} of toric stacks and spaces of \emph{twisted arcs} in the sense of Yasuda \cite{YasMotivic} (see \S\ref{rmks}).

As another application, we consider the action of $GL_n$ on $n\times n$ matrices by left multiplication.  In \S\ref{s:gln}, we show that Theorem~\ref{t:basis} applies to this situation, using a paving defined in terms of contact loci with certain determinantal varieties (Corollary~\ref{c:glnbasis}).  Using our results concerning the behavior of equivariant cohomology under birational maps (Corollary~\ref{c:birational}), we then deduce an arc-theoretic basis for the $GL_n$-equivariant cohomology of a partial flag variety (Corollary~\ref{c:flag}).

In the case when $V$ is a determinantal variety cut out by maximal minors, Ko{\v{s}}ir and Sethuraman proved that the jet schemes $J_m V$ are irreducible \cite[Theorem~3.1]{KSDeterminantal}.  The hypotheses for Theorems~\ref{t:multiplication}~and~\ref{t:smooth} fail for these subvarieties---determinantal varieties are generally not l.c.i., and they have large singular sets---but the conclusions appear to hold (Conjecture~\ref{c:gln}); it would be interesting to have a more general framework which explains this.  An intriguing consequence of Equation~\eqref{eq:magic} and the more general Theorems~\ref{t:multiplication}~and~\ref{t:smooth} is that they allow us to conjecture, and prove in some cases, formulas for the multi-degrees of $J_m V$ (Conjecture~\ref{c:gln}, Remark~\ref{r:known})---a calculation which Macaulay~2 can perform in very few examples.

In the theory of equivariant cohomology, one often chooses finite-di{\-}men{\-}sional algebraic approximations to the mixing space; see, e.g., \cite[\S2]{FulEq}.  
(This approach was used by Totaro, and further developed by Edidin and Graham, to define an algebraic theory of {\it equivariant Chow groups}.)  
In this context, one may attempt to find representatives for classes in $H_G^*X$ via subvarieties of the approximation space (cf. \cite[\S2.2]{BriChow}) or deform to transverse position to compute products (cf. \cite{AndPos}).  As mentioned above, our approach uses the jet schemes $J_m X$ as finite-dimensional approximations to $J_\infty X$.  These seem to be unrelated to the mixing space approximations; as with the arc space, they have the advantages of being intrinsic to $X$ and carrying large group actions.

Equivariant classes in jet schemes have also been studied by B\'{e}rczi and Szenes \cite{BS}, from a somewhat different point of view.  Our results overlap in a simple special case.  They consider the space
\[
  J_d(n,k) = \Hom(\Spec \C[t_1,\ldots,t_n]/(t_1,\ldots,t_n)^{d+1}, \A^k) ,
\]
and compute the classes of contact loci $\Cont^d(\{0\})$.  In general, this is quite complicated, but in our case, when $n=1$, the class in question is $c_k^d \in H_{GL_k}^*J_d(1,k) = \Z[c_1,\ldots,c_k]$.  This is also an easy case of Conjecture~\ref{c:gln} (see Remark~\ref{r:known}(\ref{r:Vn})).

Arc spaces have also been used by Arkhipov and Kapranov to study the \emph{quantum cohomology} of 
toric varieties \cite{AKQuantum}.  There may be an interesting relation between their point of view 
and ours, but we do not know a direct connection.  %In \S\ref{rmks}, we suggest a related correspondence between \emph{equivariant orbifold cohomology} and \emph{twisted arcs} in the sense of Yasuda \cite{YasMotivic}. 

For the convenience of the reader, we include brief summaries of basic facts about equivariant cohomology (\S\ref{s:eq-coh}) and jet schemes (\S\ref{arcjet}), together with references.  In \S\ref{equivgeom}, we prove a technical fact about stabilizers (Proposition~\ref{p:boundary}) which is used in the proof of Theorem~\ref{t:basis}.  The main results and applications described above are contained in \S\S\ref{jet&eq}--\ref{s:gln}.  We conclude the paper with a short discussion of questions and projects suggested by the ideas presented here.

\medskip
\noindent
{\it Notation and conventions.}  
All schemes are over the complex numbers.  For us, a \define{variety} is a separated reduced scheme of finite type over $\C$, assumed to be pure-dimensional but not necessarily irreducible.  Throughout, $G$ will be a connected linear algebraic group over $\C$, and $X$ will be a $G$-variety.

Unless otherwise indicated, cohomology will be taken with $\Z$ coefficients, with respect to the usual (complex) topology.

\medskip
\noindent
{\it Acknowledgements.}  The authors would like to thank Mircea Musta\c t\v a for several enlightening discussions, and Mark Haiman for suggesting that the calculation at the end of \S\ref{s:gln} should generalize from projective space to all partial flag varieties.  We also thank Sara Billey, Bill Fulton, and Rich\'ard Rim\'anyi for helpful comments.

%%%%%%%%%%%%%%%%%%%%%%%%%%%%%%%%%%%%%%%%%%%%%%%%
%%%%%%%%%%%%%%%%%%%%%%%%%%%%%%%%%%%%%%%%%%%%%%%%
%%%%%%%%%%%%%%%%%%%%%%%%%%%%%%%%%%%%%%%%%%%%%%%%
%%%%%%%%%%%%%%%%%%%%%%%%%%%%%%%%%%%%%%%%%%%%%%%%
\section{Equivariant cohomology}\label{s:eq-coh}
%%%%%%%%%%%%%%%%%%%%%%%%%%%%%%%%%%%%%%%%%%%%%%%%
%%%%%%%%%%%%%%%%%%%%%%%%%%%%%%%%%%%%%%%%%%%%%%%%
%%%%%%%%%%%%%%%%%%%%%%%%%%%%%%%%%%%%%%%%%%%%%%%%
%%%%%%%%%%%%%%%%%%%%%%%%%%%%%%%%%%%%%%%%%%%%%%%%

We refer the reader to \cite{FulEq} or \cite{BriEq} for an introduction to equivariant cohomology, as well as proofs and details.  Here we collect the basic properties we will need, and give a few illustrative examples.  As always, $G$ is a connected linear algebraic group acting on the left on $X$.\footnote{
By definition, $H_G^*X$ is the singular cohomology of the Borel mixing space $\E G\times^G X$; equivalently, it is the cohomology of the quotient stack $[G\backslash X]$.  The reader may consult one of the above references for a discussion of this construction.}

A map $f\colon X\to X'$ is \define{equivariant} with respect to a homomorphism $\phi\colon G \to G'$ if $f(g\cdot x) = \phi(g)\cdot f(x)$ for all $g\in G$, $x\in X$.  Equivariant cohomology is contravariant for equivariant maps: one has $f^*\colon H_{G'}^*X' \to H_G^*X$.

The following two facts play a key role in our arguments:

\begin{lemma}\label{l:homotopy}
Suppose $X\to X'$ is equivariant with respect to $G \to G'$, and suppose both maps induce (weak) homotopy equivalences.  Then the induced map $H_{G'}^*X' \to H_G^*X$ is an isomorphism.
\end{lemma}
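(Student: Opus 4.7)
The plan is to pass to the Borel mixing space model of equivariant cohomology and reduce the statement to a comparison of Serre fibrations. Writing $H_G^*X = H^*(\E G \times^G X)$ and similarly $H_{G'}^*X' = H^*(\E G' \times^{G'} X')$, the equivariant map $f\colon X \to X'$ together with the homomorphism $\phi\colon G \to G'$ and a functorial choice of universal bundle (for instance Milnor's join construction, or the simplicial bar construction) induces a canonical map of total spaces $\E G \times^G X \to \E G' \times^{G'} X'$ covering the induced map of base spaces $B\phi\colon BG \to BG'$. It is therefore enough to show this map of total spaces is a weak homotopy equivalence.

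Both projections are fiber bundles with fibers $X$ and $X'$ respectively, and our map is a morphism of these fibrations whose restriction to the fibers is (homotopic to) $f$. Applying the comparison theorem for the Leray--Serre spectral sequences
\[
  E_2^{p,q} = H^p(BG;\, H^q(X)) \Rightarrow H^{p+q}(\E G \times^G X)
\]
and the analogous one for the primed side, it suffices to verify that (i) the fiber map $f^*\colon H^q(X') \to H^q(X)$ is an isomorphism for every $q$, and (ii) the base map $(B\phi)^*\colon H^*(BG') \to H^*(BG)$ is an isomorphism. Point (i) is immediate from the hypothesis that $f$ is a weak homotopy equivalence.

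The one step that requires genuine input is (ii). For this, note that $B\phi$ fits into a morphism of principal bundles $\E G \to \E G'$ equivariant for $\phi$. Since $\E G$ and $\E G'$ are both contractible, the long exact sequences of homotopy groups for the two fibrations $G \to \E G \to BG$ and $G' \to \E G' \to BG'$, together with the five-lemma applied to the induced ladder, reduce the weak equivalence of $BG$ and $BG'$ to that of $G$ and $G'$, which is given. Combining (i) and (ii) with the spectral sequence comparison yields the desired isomorphism. The main obstacle is essentially bookkeeping: one must ensure the chosen construction of $\E G$ is functorial enough in the topological group to produce the comparison map of fibrations, after which everything reduces to standard five-lemma and spectral sequence arguments.
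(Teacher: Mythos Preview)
Your argument is correct. The paper itself does not supply a proof of this lemma---it is listed among the ``basic properties'' collected in \S\ref{s:eq-coh}, with only a parenthetical remark that in the applications both maps are locally trivial bundles with contractible fibers, where the statement is easy. So there is nothing to compare against; you have filled in what the authors left implicit.

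Two minor comments on presentation. First, you write ``it is therefore enough to show this map of total spaces is a weak homotopy equivalence,'' but the spectral sequence argument you then give establishes the cohomology isomorphism directly rather than the weak equivalence. This is harmless for the conclusion, but the sentence is slightly misleading. If you want to actually prove the weak equivalence of the total spaces, there is a shorter route: once you have shown $B\phi$ is a weak equivalence, apply the five-lemma to the long exact sequences of homotopy groups for the fibrations $X \to \E G \times^G X \to BG$ and $X' \to \E G' \times^{G'} X' \to BG'$, exactly as you did for $B\phi$ itself. This avoids the spectral sequence entirely.

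Second, your $E_2$ term should in general carry local coefficients $\mathcal{H}^q(X)$ rather than constant coefficients $H^q(X)$. In the paper's setting this is not an issue, since $G$ is assumed connected (hence $BG$ is simply connected, and likewise $BG'$ by the weak equivalence), so the local system is automatically trivial. But if you want the argument to stand on its own you should either note this, or observe that since $B\phi$ is a weak equivalence it induces an equivalence of fundamental groupoids, and $f^*$ gives an isomorphism of the local systems compatible with this equivalence.
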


\noindent
(In most of our applications of Lemma~\ref{l:homotopy}, both maps will be locally trivial fiber bundles with contractible fibers---here both the hypothesis and conclusion are easily verified.)

\begin{lemma}\label{l:orbit}
The equivariant cohomology of an orbit is described as follows: for a closed subgroup $G'\subseteq G$, one has
\[
  H_G^*(G/G') = H_{G'}^*(\pt).
\]
\end{lemma}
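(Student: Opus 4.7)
The plan is to compute $H_G^*(G/G')$ directly from the Borel construction and identify the result with the classifying space $BG'$. The argument has two essentially independent steps.

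First, I would establish a natural homeomorphism
\[
EG\times^G(G/G') \;\xrightarrow{\sim}\; EG/G',
\]
where $G' \subseteq G$ acts on $EG$ by restriction of the $G$-action. Explicitly (with $G$ acting on $EG$ from the right and on $G/G'$ from the left), this map sends $[e,hG']$ to $[eh]$, and its inverse sends $[e]$ to $[e, G']$. Well-definedness, continuity, and bijectivity are straightforward verifications.

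Second, I would observe that because $G$ acts freely on $EG$, the closed subgroup $G'$ does too; since $EG$ is also contractible, it serves as a model for $EG'$, and hence $EG/G'$ is a model for $BG'$. Combining the two steps gives
\[
H_G^*(G/G') \;=\; H^*(EG\times^G(G/G')) \;=\; H^*(EG/G') \;=\; H^*(BG') \;=\; H_{G'}^*(\pt).
\]

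There is no substantial obstacle; the lemma amounts to the observation that $EG$ doubles as a model for $EG'$. Alternatively, one could invoke Lemma~\ref{l:homotopy} applied to the $G'$-equivariant projection $EG \to \pt$ (a weak homotopy equivalence between a free contractible $G'$-space and a point with trivial $G'$-action) to obtain $H_{G'}^*(\pt) \cong H_{G'}^*(EG)$; combined with the free-quotient identification $H_{G'}^*(EG) = H^*(EG/G')$ and the homeomorphism above, this yields the same conclusion with slightly different packaging.
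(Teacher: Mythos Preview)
Your argument is correct and is the standard proof of this fact. Note that the paper does not actually give a proof of this lemma: it is listed in \S\ref{s:eq-coh} among the ``basic properties we will need,'' with proofs deferred to the references \cite{FulEq} and \cite{BriEq}. Your write-up supplies exactly the expected argument via the Borel construction, so there is nothing to compare beyond observing that the paper treats the statement as background.
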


\begin{example}\label{e:representation}
For a representation $V$ of $G$, one has $H_G^*V = H_G^*(\pt)$. % by Lemma \ref{l:homotopy}.  
\end{example}

%%%
\excise{
Similarly, if $B\subseteq GL_n$ is the Borel subgroup of upper-triangular matrices, one has $H_T^*X = H_B^*X$ for any $B$-space $X$.}
%%%

\begin{example}\label{ex:contractible}
If $G$ is contractible, then $H_G^*(\pt) = \Z$.  
\end{example}

When $X$ is smooth, a closed $G$-invariant subvariety $Z\subseteq X$ of codimension $c$ defines a class $[Z]$ in $H_G^{2c}X$.  If $Z_1,\ldots,Z_k$ denote the irreducible components of $Z$, then $[Z] = [Z_1] + \cdots + [Z_k]$.

An equivariant vector bundle $V \to X$ has \define{equivariant Chern classes} $c^G_i(V)$ in $H_G^{2i}X$, with the usual functorial properties of Chern classes.

\begin{example}\label{ex:gln}
An equivariant vector bundle on a point is simply a representation of $G$, so one has corresponding Chern classes $c^G_i(V) \in H_G^*(\pt)$.  For $V=\C^n$, with $GL_n$ acting by the standard representation, the Chern classes $c_i = c^G_i(V)$ freely generate $H_{GL_n}^*(\pt)$.
\end{example}

A key feature of equivariant cohomology is that $H_G^*X$ is canonically an algebra over $H_G^*(\pt)$, via the constant map $X \to \pt$.  In contrast to the non-equivariant situation, $H_G^*(\pt)$ is typically not trivial.  

\begin{example}
If $T\isom(\C^*)^n$ is a torus with character group $M\isom\Z^n$, then $H_T^*(\pt) = \Sym^* M \isom \Z[t_1,\ldots,t_n]$.  The inclusion $(\C^*)^n \hookrightarrow GL_n$ induces an inclusion
\[
  H_{GL_n}^*(\pt) = \Z[c_1,\ldots,c_n] \hookrightarrow \Z[t_1,\ldots,t_n],
\]
sending $c_i$ to the $i$th elementary symmetric function in $t$.
\end{example}

We will use \define{equivariant Borel-Moore homology} $\BH^G_*X$ as a technical tool; see \cite[p.605]{EGEIT} or \cite[Section 1]{BriPoincare} for some details.  The main facts are analogous to the non-equivariant case, for which a good reference is \cite[Appendix B]{FulYT}; we summarize them here.

If $X$ has (pure) dimension $d$, then $\BH^G_i X = 0$ for $i>2d$ and $\BH^G_{2d}X = \bigoplus \Z$, with one summand for each irreducible component of $X$.  In contrast to the non-equivariant case, $\BH^G_i X$ may be nonzero for arbitrarily negative $i$. If $X$ is smooth of dimension $d$, then $\BH^G_i X = H_G^{2d-i}X$.

Borel-Moore homology is covariant for equivariant proper maps and contravariant for equivariant open inclusions.  For $Z\subseteq X$ a $G$-invariant closed subvariety of codimension $c$, there is a fundamental class $[Z]$ in $\BH^G_{2d-2c}X$.  More generally, if $Z\subseteq X$ is any $G$-invariant closed subset, with $U=X\setminus Z$ the open complement, there is a long exact sequence
\[
  \cdots \to \BH^G_i Z \to \BH^G_i X \to \BH^G_i U \to \BH^G_{i-1} Z \to \cdots.
\]

\begin{definition}\label{def:trivialBM}
A $d$-dimensional variety $X$ has \define{trivial equivariant Borel-Moore homology} if
\[
  \BH^G_i X = \left\{\begin{array}{cl} \Z & \text{if } i=2d ; \\ 0 & \text{otherwise}. \end{array}\right.
\]
\end{definition}

\begin{example}\label{ex:trivialBM}
For us, the main examples of such varieties arise as follows.  An \define{affine family of 
$G$-orbits} is a smooth map $S \to \A^n$ of $G$-varieties, with $G$ acting trivially on $\A^n$, 
such that there is a section $s\colon\A^n \to S$, and the map $G \times \A^n \to S$, 
$(g,x)\mapsto g\cdot s(x)$ is smooth and surjective.  In other words, as a smooth scheme over 
$\A^n$, $S$ is the geometric quotient of the group scheme $\GG = G \times \A^n$ by a closed 
subgroup scheme $\HH$ over $\A^n$, so we may write $S = \GG/\HH$.

When $\HH \to \A^n$ has contractible fibers---i.e., the stabilizers (in $G$) of points in $S$ are contractible subgroups---the projection $S \to \A^n$ is a (Serre) fibration, by \cite[Corollary 15(ii)]{MeiSubmersions}.  It follows that $H_G^*(S) = H_G^*(G/H_0) = \Z$, where $G/H_0 \subseteq S$ is the fiber over $0\in\A^n$.  Since $S$ is smooth, we conclude that $S$ has trivial Borel-Moore homology.
\end{example}

The following is an equivariant analogue of \cite[Appendix B, Lemma 6]{FulYT}:

\begin{lemma}\label{l:basis}
Suppose $X$ has a filtration by $G$-invariant closed subvarieties $X_{s} \subseteq X_{s-1} \subseteq \cdots \subseteq X_0 = X$ such that each complement $U_{i}=X_i\setminus X_{i+1}$ has trivial equivariant Borel-Moore homology.  Then, for $0\leq k < \codim(X_{s},X)$, we have
\[
  \BH^G_{2d-2k}X = \bigoplus_{\codim U_i = k} \Z\cdot[\overline{U}_i]
\]
and $\BH^G_{2d-2k+1}X =0$.  Consequently, if $X$ is smooth we have
\[
  H_G^{2k}X = \bigoplus_{\codim U_i = k} \Z\cdot[\overline{U}_i]
\]
and $H_G^{2k-1}X=0$, for $0\leq k < \codim(X_{s},X)$.
\end{lemma}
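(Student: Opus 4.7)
The plan is to run the equivariant analogue of the classical dévissage argument (cf.\ \cite[Appendix B, Lemma 6]{FulYT}): use the long exact sequence in equivariant Borel-Moore homology to build up $X$ from the strata $U_i$, and observe that the trivial-homology hypothesis on each $U_i$ forces the relevant long exact sequences to collapse into split short exact sequences. Poincaré duality, $H_G^{2d-j}X = \BH^G_j X$ for smooth $X$, then converts the Borel-Moore statement into the cohomological one.

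Concretely, extend the filtration by $X_{s+1} = \emptyset$ (so $U_s = X_s$) and induct on $i$ descending from $s$ to $0$, proving at each stage the analogous statement with $X_i$ in place of $X$ and with the basis indexed by $j \ge i$. The base case $i = s$ is immediate from the hypothesis on $U_s = X_s$. For the inductive step, apply the Borel-Moore long exact sequence for the closed inclusion $X_{i+1} \hookrightarrow X_i$ with open complement $U_i$:
\[
  \cdots \to \BH^G_j X_{i+1} \to \BH^G_j X_i \to \BH^G_j U_i \to \BH^G_{j-1} X_{i+1} \to \cdots.
\]
By hypothesis, $\BH^G_j U_i = \Z\cdot [U_i]$ for $j = 2\dim U_i$ and vanishes otherwise. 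In the range $j > 2\dim X_s$ the inductive hypothesis gives odd-degree vanishing of $\BH^G_* X_{i+1}$; the connecting map out of $\BH^G_j U_i$ is therefore zero, leaving a short exact sequence split by the lift $[\overline{U_i}] \in \BH^G_{2\dim U_i} X_i$ (the class is automatically split since the quotient is $\Z$, and the splitting is realized by the fundamental class because its restriction to $U_i$ is $[U_i]$). Iterating down to $i = 0$ and pushing forward through the closed inclusions produces the claimed $\Z$-basis of $\BH^G_{2d-2k}X$ together with the odd-degree vanishing.

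The only point that takes real care is why the conclusion is confined to the range $0 \le k < \codim(X_s, X)$: the induction only supplies odd-degree vanishing for $\BH^G_* X_{i+1}$ strictly above the cutoff degree $2\dim X_s$, so the collapse of the long exact sequence is justified precisely when $2\dim U_i - 1 > 2\dim X_s$, i.e., when $\dim U_i > \dim X_s$. Strata with $\dim U_i \le \dim X_s$ (including $U_s$ itself) may contribute messily at or below the cutoff, but they fall outside the indexing condition $\codim U_i = k < \codim(X_s, X)$ and so do not affect the stated basis or vanishing. I expect this bookkeeping---tracking the inductive hypothesis carefully in the two borderline degrees $2\dim U_i$ and $2\dim U_i - 1$ at each stage---to be the main (and essentially only) technical point; the rest of the argument is a direct equivariant transcription of the well-known non-equivariant proof.
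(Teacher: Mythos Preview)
Your proposal is correct and is precisely the argument the paper has in mind: the paper omits the proof entirely, stating only that it ``proceeds exactly as in the non-equivariant case (using induction and the long exact sequence),'' and your write-up is a careful equivariant transcription of that standard d\'evissage. Your attention to the borderline degrees and the range restriction $k<\codim(X_s,X)$ is exactly the bookkeeping the omitted proof requires.
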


\noindent
We omit the proof, which proceeds exactly as in the non-equivariant case (using induction and the long exact sequence).

We will also need a slight refinement, whose proof is immediate from the long exact sequence:
\begin{lemma}\label{l:openBM}
Let $X_0 \subseteq X$ be a $G$-invariant open subset.  Then the induced map $\BH^G_k X \to \BH^G_k X_0$ is an isomorphism for $2d \geq k > 2\dim(X\setminus X_0)+1$.
\end{lemma}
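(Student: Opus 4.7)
The plan is to apply the equivariant Borel-Moore long exact sequence to the $G$-invariant closed subset $Z := X \setminus X_0$ with open complement $X_0$, namely
\[
  \cdots \to \BH^G_k Z \to \BH^G_k X \to \BH^G_k X_0 \to \BH^G_{k-1} Z \to \cdots,
\]
which is listed among the basic properties of $\BH^G_*$ recalled just before Definition~\ref{def:trivialBM}. Exactness at $\BH^G_k X$ will give the desired conclusion provided the two flanking terms vanish.

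To that end, I would invoke the other vanishing fact stated there: for any variety $Z$, one has $\BH^G_i Z = 0$ whenever $i > 2\dim Z$. Setting $e := \dim(X\setminus X_0) = \dim Z$, the hypothesis $k > 2e+1$ gives both $k > 2e$ and $k-1 > 2e$, so
\[
  \BH^G_k Z = 0 \quad\text{and}\quad \BH^G_{k-1} Z = 0.
\]
The long exact sequence then collapses to an isomorphism $\BH^G_k X \xrightarrow{\sim} \BH^G_k X_0$ in this range, as claimed. The upper bound $k \leq 2d$ is just the range in which the groups can be nonzero to begin with (since $\BH^G_i X = 0$ for $i > 2d$), and so is not used in the exactness argument.

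The proof is essentially formal once the two listed properties (the excision long exact sequence and the dimensional vanishing of $\BH^G_*$) are granted, so there is no real obstacle; the only care needed is in extracting the vanishing for the potentially reducible, possibly non-equidimensional closed subvariety $Z$ from the conventions declared in the introduction, where varieties are pure-dimensional. Since $Z$ is an arbitrary $G$-invariant closed subset of $X$, I would just remark that the dimensional vanishing extends from pure-dimensional closed subvarieties to arbitrary closed subsets by an immediate induction on the number of irreducible components, using the same long exact sequence.
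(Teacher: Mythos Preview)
Your argument is correct and is precisely what the paper has in mind: it simply asserts the lemma is ``immediate from the long exact sequence,'' and your writeup spells this out, with a reasonable extra remark on handling a possibly non-equidimensional $Z$.
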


%%%%%%%%%%%%%%%%%%%%%%%%%%%%%%%%%%%%%%%%%%%%%%%%
%%%%%%%%%%%%%%%%%%%%%%%%%%%%%%%%%%%%%%%%%%%%%%%%
%%%%%%%%%%%%%%%%%%%%%%%%%%%%%%%%%%%%%%%%%%%%%%%%
%%%%%%%%%%%%%%%%%%%%%%%%%%%%%%%%%%%%%%%%%%%%%%%%
\section{Arc spaces and jet schemes}\label{arcjet}
%%%%%%%%%%%%%%%%%%%%%%%%%%%%%%%%%%%%%%%%%%%%%%%%
%%%%%%%%%%%%%%%%%%%%%%%%%%%%%%%%%%%%%%%%%%%%%%%%
%%%%%%%%%%%%%%%%%%%%%%%%%%%%%%%%%%%%%%%%%%%%%%%%
%%%%%%%%%%%%%%%%%%%%%%%%%%%%%%%%%%%%%%%%%%%%%%%%

In this section, we review some aspects of the theory of arc spaces and jet schemes, and set notation for the rest of the paper.  We refer the reader to \cite{MusJet} and \cite{EMJet} for more details. 

Let $X$ be a scheme over $\C$ of finite type.
The $m^{\textrm{th}}$ \define{jet scheme} of $X$ is a scheme $J_{m}X$ over $\C$ whose 
$\C$-valued points parameterize all morphisms $\Spec \C[t]/(t^{m + 1}) \rightarrow X$. 
For example, $J_{0}X = X$ and $J_{1}X = TX$ is the total tangent space of $X$. 
In what follows, we will often identify schemes with their $\C$-valued points. 

For $m \geq n$, the natural ring homomorphism $\C[t]/(t^{m + 1}) \rightarrow \C[t]/(t^{n + 1})$ induces truncation morphisms
\[
  \pi_{m,n}\colon J_{m}X \rightarrow J_{n}X,
\]
and we write 
\[
  \pi_m = \pi_{m,0}\colon J_{m}X \rightarrow X.
\]
The inclusion $\C \hookrightarrow \C[t]/(t^{m + 1})$ induces a morphism $\Spec \C[t]/(t^{m + 1}) \rightarrow \Spec \C$, and hence a morphism
\[
  s_m\colon X \rightarrow J_mX,
\]
called the \define{zero section}, with the property that $\pi_m \circ s_m = \id$.

The truncation morphisms $\pi_{m, m-1}\colon J_{m}X \rightarrow J_{m-1}X$ form a projective system whose projective limit is a scheme $J_{\infty}X$ over $\C$, which is typically not of finite type. The scheme $J_{\infty}X$ is called the \define{arc space} of $X$, and the $\C$-valued points of $J_{\infty}X$ parameterize all morphisms $\Spec \C[[t]] \rightarrow X$.  For each $m$, there is a truncation morphism
\[
  \psi_{m}\colon J_{\infty}X \rightarrow J_{m}X,
\]
induced by the natural ring homomorphism  $\C[[t]] \rightarrow \C[[t]]/(t^{m + 1}) = \C[t]/(t^{m + 1})$.

Both $J_{m}$ and $J_{\infty}$ are functors from the category of schemes of finite type over $\C$ to the category of schemes over $\C$, and both preserve fiber squares (cf. \cite[Remark 2.8]{EMJet}).  For a morphism $f\colon X \to Y$, we write $f_m \colon J_m X \rightarrow J_m Y$ for the corresponding morphism of jet schemes.  The following lemma should be compared with Theorem~\ref{lct}.

\begin{lemma}\cite[Proposition 5.12]{EMJet}\label{thin}
If $X$ is a smooth variety and $V$ is a closed subscheme of $X$ with $\dim V < \dim X$, then
\[
 \lim_{m \to \infty}  \codim(J_m V, J_m X) = \infty.
\]
\end{lemma}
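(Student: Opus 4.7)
I would prove this by induction on $\dim V$. After a preliminary reduction to the case that $V$ is irreducible and reduced---using that $J_m$ distributes over unions at the set level and that the ideals of $J_m V$ and $J_m V_{\mathrm{red}}$ in $J_m X$ have the same radical---the base case $\dim V = 0$ is immediate, since then $J_m V$ is a finite set of points and $\codim(J_m V, J_m X) = (m+1) \dim X$.

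For the inductive step, set $e = \dim V$, $d = \dim X$ (so $e < d$), and let $W = V_{\mathrm{sing}}$, with $\dim W < e$. Decompose $J_m V = J_m V_{\mathrm{sm}} \sqcup Z_m$. The open part $J_m V_{\mathrm{sm}}$ agrees with the jet scheme of the smooth variety $V_{\mathrm{sm}}$, hence has dimension $(m+1) e$, giving codimension $(m+1)(d - e) \to \infty$. So the work reduces to bounding $\dim Z_m$, where $Z_m := J_m V \cap \pi_m^{-1}(W)$. For this the key tool is the truncation $\pi_{m+1, m}\colon J_{m+1} X \to J_m X$, an affine bundle of rank $d$: its restriction $J_{m+1} V \to J_m V$ has fiber over a jet $\gamma$ defined by linear equations $df_j(\epsilon) + a_j = 0$ (for local defining equations $f_j$ of $V$), yielding a possibly empty affine space of dimension $d - r(\gamma)$, where $r(\gamma)$ is the rank of the Jacobian of $V$ at $\pi_0(\gamma)$. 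Over $V_{\mathrm{sm}}$ this fiber has dimension $e$; over $W$ it may be as large as $d$. Combined with the inductive hypothesis applied to $W \subsetneq V$---which gives $\codim(J_m W, J_m X) \to \infty$---and propagated up the truncation tower, this should bound $\dim Z_m$.

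The main obstacle is that $Z_m$ is strictly larger than $J_m W$: a jet in $Z_m$ need only be based at $W$ and may escape into $V \setminus W$, so the inductive hypothesis cannot be applied to $Z_m$ directly. A rigorous argument calls for a further stratification of $Z_m$, for instance by the largest $k$ for which $\pi_{m, k}(\gamma) \in J_k W$, so that jets that stay in $W$ deeply are controlled by the inductive bound on $J_k W$, while jets that escape quickly are controlled by the strict drop in fiber dimension over $V \setminus W$ (where the Jacobian has positive rank). Balancing these two contributions yields the desired conclusion $\codim(Z_m, J_m X) \to \infty$.
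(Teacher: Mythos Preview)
The paper does not supply its own proof of this lemma; it simply cites \cite[Proposition~5.12]{EMJet}.  So there is no argument in the paper to compare against directly, only the standard proof in Ein--Musta\c{t}\u{a}.  That proof is quite different from yours: one reduces immediately to the hypersurface case by choosing a nonzero $f$ in the ideal of $V$ (so $V\subseteq V(f)$), and then bounds $\codim(J_mV(f),J_mX)$ from below by a quantity tending to infinity---either via the log canonical threshold (the paper hints at this by pointing to Theorem~\ref{lct}) or by an elementary multiplicity argument.  No induction on $\dim V$ via the singular locus is used.

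Your inductive scheme has a genuine gap at the point you flag as the ``main obstacle,'' and the fix you propose does not work.  You suggest stratifying $Z_m=J_mV\cap\pi_m^{-1}(W)$ by the largest $k$ with $\pi_{m,k}(\gamma)\in J_kW$, and claim that ``jets that escape quickly are controlled by the strict drop in fiber dimension over $V\setminus W$.''  But the fiber of $\pi_{j+1,j}\colon J_{j+1}V\to J_jV$ over a $j$-jet $\gamma$ is, as you yourself compute, an affine space (or empty) of dimension $d-r(\gamma(0))$: it depends only on the base point $\gamma(0)\in W$, not on whether the jet has escaped $W$ at some intermediate order.  So there is no drop in fiber dimension for jets that escape quickly---every step of the tower over such a jet still has fiber dimension $d-r(x)$ with $x\in W$, which can be as large as $d$ (for instance at the vertex of a cone, or at the cusp of $x^2=y^3$).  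With this bound, the stratum $Z_m^0$ of jets escaping immediately satisfies only $\codim(Z_m^0,J_mX)\ge \codim(W,X)$, a constant independent of $m$.

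What actually controls $Z_m$ over the worst points of $W$ is not the \emph{dimension} of the truncation fibers but their \emph{emptiness}: the affine equations $df_j(\epsilon)=-a_j$ are frequently inconsistent as one climbs the tower, and counting how often this happens is exactly what the hypersurface reduction (together with a bound on the multiplicity of $f$) accomplishes cleanly.  If you want to rescue an induction on $\dim V$, you would need to keep track of this obstruction phenomenon, which amounts to redoing the hypersurface argument inside the induction.  It is simpler to reduce to a hypersurface at the outset.
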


%\begin{remark}\label{smootharc}
%If $X$ is a smooth variety of dimension $d$, then $J_mX$ is a smooth variety of dimension 
%$(m + 1)d$, and the truncation morphisms $\pi_{m, m- 1}: J_{m}X \rightarrow J_{m-1}X$ are Zariski-locally trivial fibrations with fiber $\mathbb{A}^{d}$ \cite[Corollary 2.11]{EMJet}.  Moreover, the projection $\psi_0\colon J_{\infty} X \rightarrow X$ is a Zariski-locally trivial fibration with contractible fibers.  

%When $X$ is singular, $J_m X$ may not be reduced or irreducible, and may not be pure-dimensional.  However, if $\sm(X)$ denotes the smooth locus of $X$, then the closure of $\pi_m^{-1} \sm(X) \subseteq J_m X$ is an irreducible component of dimension $(m + 1)d$.  
%\end{remark}

The fundamental fact we exploit in this paper is the following:

\begin{lemma}[{\cite[Corollary 2.11]{EMJet}}]\label{smootharc}
If $X$ is a smooth variety of dimension $d$, then $J_mX$ is a smooth variety of dimension 
$(m + 1)d$, and the truncation morphisms $\pi_{m, m- 1}\colon J_{m}X \rightarrow J_{m-1}X$ are Zariski-locally trivial fibrations with fiber $\mathbb{A}^{d}$.  Moreover, the projection $\psi_0\colon J_{\infty} X \rightarrow X$ is a Zariski-locally trivial fibration with contractible fibers.  
\end{lemma}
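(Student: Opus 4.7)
The plan is to reduce to the local model on $\A^d$ and then transfer along Zariski-trivializations coming from a torsor structure on $\pi_{m,m-1}$.

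First, I would establish the local model: for $X=\A^d=\Spec \C[x_1,\dots,x_d]$, a $\C$-point of $J_m X$ is a $d$-tuple of polynomials $a_i(t)\in\C[t]/(t^{m+1})$, parametrized by their $m+1$ Taylor coefficients $x_i^{(0)},\dots,x_i^{(m)}$. Hence $J_m\A^d=\A^{(m+1)d}$, and $\pi_{m,m-1}$ becomes the coordinate projection forgetting the variables $x_i^{(m)}$, which is trivially a Zariski-locally trivial $\A^d$-fibration. This also shows $J_m$ takes a smooth affine $U$ with étale coordinates $U\to\A^d$ to a smooth affine with $J_mU\to J_m\A^d$ étale (using that $J_m$ preserves fiber squares and takes open immersions to open immersions).

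Next, I would put a torsor structure on $\pi_{m,m-1}$. For any scheme $X$ and any $(m-1)$-jet $\gamma\colon\Spec\C[t]/(t^m)\to X$, standard deformation theory identifies the set of extensions of $\gamma$ to $\Spec\C[t]/(t^{m+1})$ with a torsor under $T_{\gamma(0)}X$ (given by the derivation of $\C[t]/(t^{m+1})\to\C[t]/(t^m)$). Globalizing, this makes $\pi_{m,m-1}\colon J_mX\to J_{m-1}X$ a torsor under the vector bundle $(\pi_{m-1})^*TX$ on $J_{m-1}X$. When $X$ is smooth, $TX$ is a rank-$d$ vector bundle, so the torsor has affine-space fibers of dimension $d$. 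To upgrade to Zariski-local triviality: take an affine open $U\subseteq X$ over which $TX$ is free. Then $J_{m-1}U\subseteq J_{m-1}X$ is affine (jets of an affine are affine), so every $\mathbb{G}_a^d$-torsor on it is trivial, since $H^1$ of an affine scheme with values in a coherent sheaf vanishes. The open sets $J_{m-1}U=\pi_{m-1}^{-1}(U)$ cover $J_{m-1}X$, giving Zariski-local triviality of $\pi_{m,m-1}$. Induction on $m$ starting from $J_0X=X$ now gives that $J_mX$ is smooth of dimension $(m+1)d$.

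Finally, for the projection $\psi_0\colon J_\infty X\to X$, I would assemble the trivializations coherently. On any affine open $U\subseteq X$ with $TU$ free, fix a trivialization $\phi\colon TU\isom U\times\C^d$; pulling this back to each $J_{m-1}U$ gives compatible splittings of the torsors, yielding compatible Zariski-isomorphisms $J_mU\isom U\times\A^{md}$ over $U$ that are intertwined with the truncations $\A^{md}\to\A^{(m-1)d}$ (coordinate projections). Passing to the inverse limit yields $\psi_0^{-1}(U)\isom U\times\A^\infty$, where $\A^\infty=\varprojlim\A^{md}$ (coordinate projections). The fiber $\A^\infty$ is contractible via the straight-line homotopy $H_s(v)=sv$, which is continuous in the inverse-limit topology since it is continuous at each finite level. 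This gives the asserted Zariski-locally trivial fibration with contractible fibers.

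The main obstacle is getting compatibility of trivializations across the tower $\{J_mX\}$ in Step~3; resolving this requires that the torsor structures for different $m$ all come from the single vector bundle $TX$ on $X$, so that one trivialization of $TU$ induces all of them at once, rather than choosing splittings at each level independently.
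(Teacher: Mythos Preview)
The paper does not prove this lemma; it is quoted from \cite[Corollary~2.11]{EMJet}, so there is no in-paper argument to compare against. Your outline is the standard one and is essentially correct, with one imprecision in the final step. A trivialization $TU\isom U\times\C^d$ identifies each torsor $J_m U \to J_{m-1} U$ as a $\mathbb{G}_a^d$-torsor over an affine scheme, hence trivializable; but it does not single out a section. Knowing that all the torsors share the structure group $\pi_{m-1}^*TU$ does not, by itself, produce the ``compatible splittings'' you invoke---a section must still be chosen at each level, and nothing forces these choices to be coherent under truncation.

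The fix is already implicit in your first paragraph: cover $X$ by affine opens $U$ admitting an \'etale map $f\colon U\to\A^d$ (such a cover exists precisely because $X$ is smooth). Formal \'etaleness gives a Cartesian square over $J_0$, so $J_m U = U\times_{\A^d} J_m\A^d \cong U\times\A^{md}$, and under these identifications the truncation $J_mU\to J_{m-1}U$ becomes the coordinate projection $U\times\A^{md}\to U\times\A^{(m-1)d}$. These trivializations are compatible by construction, and passing to the inverse limit yields $\psi_0^{-1}(U)\cong U\times\A^\infty$ with $\A^\infty$ contractible, as you wanted. In short: use \'etale coordinates rather than a frame of vector fields to manufacture the compatible system of splittings.
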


A little more can be said about the projections, still in the smooth case:
\begin{lemma}[{see \cite[Proposition~2.6]{ish}}]\label{l:reltan}
If $X$ is a smooth variety, the relative tangent bundle for the truncation map $J_{m}X \rightarrow J_{m-1}X$ is isomorphic to $\pi_m^*TX$.
\end{lemma}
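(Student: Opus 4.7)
The plan is to use local coordinates on $X$ to trivialize both bundles on the corresponding charts of $J_m X$, and then check that the resulting identification glues. Over an affine chart $U \subseteq X$ with coordinates $x_1, \ldots, x_d$, the jet scheme is $J_m U \cong U \times \A^{md}$ with coordinates $(x_i; x_i^{(1)}, \ldots, x_i^{(m)})$, where a point represents the arc $x_i(t) = x_i + \sum_{k \geq 1} x_i^{(k)} t^k$. The truncation $J_m U \to J_{m-1} U$ forgets the top jet coordinates $x_i^{(m)}$, so the relative tangent bundle on $J_m U$ is trivial with frame $\{\partial/\partial x_i^{(m)}\}_{i=1}^d$, while $\pi_m^* TU$ is trivial with frame $\{\pi_m^*(\partial/\partial x_i)\}_{i=1}^d$. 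I define the local isomorphism $\phi_U$ by sending $\partial/\partial x_i^{(m)} \mapsto \pi_m^*(\partial/\partial x_i)$.

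The main step is verifying compatibility under a change of coordinates $y_j = y_j(x)$ on $X$. Substituting $x_i(t)$ into $y_j$ and reading off the coefficient of $t^m$, one obtains
\[
  y_j^{(m)} = \sum_i \frac{\partial y_j}{\partial x_i}(x) \cdot x_i^{(m)} + P_j(x, x^{(1)}, \ldots, x^{(m-1)}),
\]
since if we assign weight $k$ to $x_i^{(k)}$, the only monomial of weight $m$ in which some $x_i^{(m)}$ appears is $x_i^{(m)}$ itself (multiplied by a polynomial in the $x_i$'s). Moreover $y_j^{(l)}$ for $l < m$, as well as $y_j$ itself, involves no $x_i^{(m)}$. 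Hence $\partial y_j^{(m)}/\partial x_i^{(m)} = \partial y_j/\partial x_i$, which is precisely the transition matrix for $TX$ under the change $x \mapsto y$. Consequently the frames $\{\partial/\partial x_i^{(m)}\}$ on overlapping charts of $J_m X$ transform by exactly the same cocycle as the frames $\{\partial/\partial x_i\}$ for $TX$, so the $\phi_U$ glue to a global isomorphism $T_{J_m X / J_{m-1} X} \cong \pi_m^* TX$.

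The only technical point is the weight-counting observation above: only the leading linear term in the expansion of $y_j(x(t))$ contributes to derivatives in the $x_i^{(m)}$ directions. Conceptually, this is visible via deformation theory: a relative tangent vector at an $m$-jet $\gamma$ is a morphism $\Spec \C[t,\epsilon]/(t^{m+1}, \epsilon^2) \to X$ extending $\gamma$ whose restriction to $\Spec \C[t,\epsilon]/(t^m, \epsilon^2)$ is the trivial $\epsilon$-extension of $\gamma|_{\Spec \C[t]/(t^m)}$, and such deformations form a torsor over $\epsilon t^m \cdot T_{\gamma(0)} X \cong T_{\pi_m(\gamma)} X$. This explains fiberwise why the relative tangent bundle is $\pi_m^* TX$, while the local-coordinate calculation upgrades this to an identification of vector bundles.
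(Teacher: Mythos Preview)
Your proof is correct. The paper does not supply its own argument for this lemma; it simply records the statement and cites Ishii \cite[Proposition~2.6]{ish}, so there is no in-paper proof to compare against. Your local-coordinate computation---in particular the weight-counting observation that the only occurrence of $x_i^{(m)}$ in $y_j^{(m)}$ comes from the linear term of the Taylor expansion, yielding $\partial y_j^{(m)}/\partial x_i^{(m)} = (\partial y_j/\partial x_i)\circ\pi_m$---is the standard and essentially unique route to this result, and matches the argument in the cited reference. The deformation-theoretic remark at the end is a nice conceptual gloss, though strictly speaking the cocycle calculation already finishes the proof.
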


When $X$ is singular, $J_m X$ may not be reduced or irreducible, and may not be pure-dimensional.  However, if $\sm(X)$ denotes the smooth locus of $X$, then the closure of $\pi_m^{-1} \sm(X) \subseteq J_m X$ is an irreducible component of dimension $(m + 1)d$.

\begin{example}\label{affine1}
Let $X = \mathbb{A}^{n} = \Spec \C[x_{1}, \ldots, x_{n}]$.  An $m$-jet $\Spec \C[t]/(t^{m + 1}) \rightarrow \A^n$
corresponds to a ring homomorphism $\C[x_{1}, \ldots, x_{n}] \rightarrow \C[t]/(t^{m + 1})$, and hence to an $n$-tuple of polynomials in $t$ of degree at most $m$.  
We conclude that
$J_{m} \A^n \cong \mathbb{A}^{(m + 1)n}$, and we write $\{ x_i^{(j)} \mid 1 \le i  \le r, 0 \le j \le m \}$ for the corresponding coordinates.

Similarly, an arc is determined by an $n$-tuple of power series over $\C$, and 
$J_{\infty} \A^n$ is an infinite-dimensional affine space. 
\end{example}

\begin{example}\label{e:equations}
With the notation of the previous example, if $X \subseteq \A^n$ is defined by equations $\{ f_1(x_1, \ldots, x_n) = \cdots = f_r(x_1, \ldots, x_n) = 0 \}$, then
an $m$-jet $\Spec \C[t]/(t^{m + 1}) \rightarrow X$ corresponds to a ring homomorphism
\[
  \C[x_{1}, \ldots, x_{n}]/(f_1, \ldots, f_r) \rightarrow \C[t]/(t^{m + 1}).
\]
The closed subscheme
$J_m X \subseteq J_m \A^n \cong \A^{(m + 1)n}$ is therefore defined by the equations
\[
  f_i\left(\sum_{j = 0}^m x_1^{(j)} t^j, \ldots, \sum_{j = 0}^m x_n^{(j)} t^j\right) 
  \equiv 0 \mod{t^{m + 1}} \quad \text{ for } 1\leq i\leq r.
\]
In other words, let $f_i^{(k)}$ be the coefficient of $t^k$ in $f_i(\sum_{j = 0}^m x_1^{(j)} t^j, \ldots, \sum_{j = 0}^m x_n^{(j)} t^j)$, so it is a polynomial in the variables $\{ x_i^{(j)} \mid 1 \le i  \le r,\; 0 \le j \le m \}$.  Then $J_m X$ is defined by the $(m + 1)r$ equations 
$\{ f_i^{(k)} = 0 \mid 1 \le i  \le r,\; 0 \le k \le m \}$. 

In fact, if $R =  \C[ x_i^{(k)} \mid 1 \le  i \le n,\; k \ge 0 ]$  and $D: R \rightarrow R$ is the unique derivation over $\C$ satisfying $D( x_i^{(k)} ) = x_i^{(k + 1)}$, then $f_i^{(k)} = D^k (f_i)$ \cite[p. 5]{MusJet}. 
\end{example}

Assume $X$ is smooth of dimension $d$.  A \define{cylinder} $C$  in $J_{\infty}X$ is a subset of the arc space of $X$ of the form $C = \psi_{m}^{-1}(S)$, for some $m \geq 0$ and 
some constructible subset $S \subseteq J_{m}X$.  The cylinder $C$ is called open, closed, locally closed, or irreducible if the corresponding property holds for $S$, and the codimension of $C$ is defined to be the codimension of $S$ in $J_{m}X$.  That these notions are well-defined follows from the fact that $\pi_{m,m-1}$ is a Zariski-locally trivial fibration with fiber $\mathbb{A}^{d}$ (Lemma~\ref{smootharc}).  A subset  of $J_\infty X$ is called \define{thin} if it is contained in $J_\infty V$ for some proper, closed subset $V \subseteq X$. 

\begin{lemma}\cite[Proposition 5.11]{EMJet}\label{l:codimension}
Let $X$ be a smooth variety and let $C \subseteq J_\infty X$ be a cylinder. If the complement of a disjoint union of cylinders $\coprod_j C_j \subseteq C$ is thin, then  $\lim_{j \to \infty} \codim C_j = \infty$ and $\codim C = \min_j \codim C_j$. 
\end{lemma}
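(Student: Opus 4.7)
The plan is to reduce both assertions to a finite-dimensional codimension computation on a single jet scheme $J_m X$ for $m$ chosen sufficiently large, with Lemma~\ref{thin} as the crucial input controlling the thin locus. I would first dispatch the easy bound $\min_j \codim C_j \ge \codim C =: c_0$: since each $C_j \subseteq C$, stabilizing both at a common jet level $m$ (possible because truncation maps are Zariski-locally trivial $\A^d$-bundles by Lemma~\ref{smootharc}) yields a corresponding inclusion of the defining constructible subsets in $J_m X$, which gives the codimension bound.

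For $\lim_j \codim C_j = \infty$, I would argue by contradiction. Suppose some $c \ge c_0$ and an infinite subsequence $\{j_k\}$ satisfy $\codim C_{j_k} \le c$. By the thinness hypothesis, fix a proper closed $V \subsetneq X$ with $T := C \setminus \coprod_j C_j \subseteq J_\infty V$; by Lemma~\ref{thin}, choose $m$ so large that $\codim(J_m V, J_m X) > c$ and that $C = \psi_m^{-1}(S)$ is already defined at level $m$, with $\codim S = c_0$. Form the truncated cylinder
\[
  C^* := C \setminus \psi_m^{-1}(J_m V) = \psi_m^{-1}(S \setminus J_m V).
\]
Since $\codim J_m V > c \ge c_0$, removing $J_m V$ does not lower the codimension of $S$, so $\codim C^* = c_0$. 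Any arc lying in $V$ has its $m$-jet in $J_m V$, so $C^* \cap J_\infty V = \emptyset$; since $T \subseteq J_\infty V$, this forces $C^* = \coprod_j (C^* \cap C_j)$.

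I would then project to the finite-dimensional $J_m X$: surjectivity of $\psi_m$ for smooth $X$ gives $\psi_m(C^*) = S \setminus J_m V$, a constructible set of codimension $c_0$ in $J_m X$ covered by the projections $\psi_m(C^* \cap C_j)$. Because $S \setminus J_m V$ has only finitely many irreducible components of each given codimension, a dimension count forces all but finitely many of the $\psi_m(C^* \cap C_{j_k})$ in the bounded-codimension subsequence to be absorbed into the same finitely many components, and the disjointness of the $C_{j_k}$ at a sufficiently high common jet level then yields a contradiction. Once the limit is established, the minimum $\min_j c_j$ is attained; it equals $c_0$ because the codim-$c_0$ portion of $\psi_m(C^*)$ must be contributed by some $\psi_m(C^* \cap C_j)$ with $c_j = c_0$.

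The main obstacle is this final dimension-theoretic comparison in $J_m X$. Cylinders defined at different jet levels are not automatically comparable after projection to a fixed level $m$, because projection along $\pi_{m',m}$ can strictly decrease codimension, and the indices $m_{j_k}$ in the bounded-codimension subsequence can be unbounded. Converting disjointness in $J_\infty X$ into a genuine finiteness statement about the projections inside the finite-dimensional constructible set $S \setminus J_m V$ requires careful bookkeeping of how the truncations $\pi_{m',m}$ interact with the defining subsets $S_{j_k}$ at higher levels $m' \ge m$, so that the disjointness at level $m'$ forces incompatible containments among finitely many irreducible components at level $m$.
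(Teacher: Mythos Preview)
The paper does not supply its own proof of this lemma: it is quoted verbatim as \cite[Proposition~5.11]{EMJet}, with no argument given. So there is nothing in the present paper to compare your proposal against; the relevant comparison is with the argument in Ein--Musta\c{t}\u{a}.

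Your plan has a genuine gap, and you have correctly located it yourself. After reducing to the cylinder $C^* = \psi_m^{-1}(S\setminus J_mV)$ and writing $C^* = \coprod_j(C^*\cap C_j)$, you project to the fixed level $J_mX$ and try to extract a contradiction from disjointness and finiteness of irreducible components. But projection along $\psi_m$ destroys disjointness: the sets $\psi_m(C^*\cap C_{j_k})$ can overlap arbitrarily inside $S\setminus J_mV$, and since the defining levels $m_{j_k}$ are unbounded there is no single finite level at which the $C_{j_k}$ become disjoint constructible subsets of a Noetherian space. The ``careful bookkeeping'' you allude to does not close this; no amount of tracking $\pi_{m',m}$ will manufacture disjointness at a fixed finite level.

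The argument in \cite{EMJet} avoids this entirely by using an inverse-limit compactness lemma: a decreasing sequence of nonempty cylinders whose codimensions stay bounded has nonempty intersection. One applies it to $B_k := C^* \cap \bigcup_{j\ge k} C_j$, which \emph{is} a cylinder (its complement in $C^*$ is the finite union $\bigcup_{j<k}(C^*\cap C_j)$). If infinitely many $C_j$ had $\codim C_j \le c$, then each $B_k$ would contain one of them intersected with $C^*$, hence $\codim B_k \le c$; but $\bigcap_k B_k = \emptyset$, contradicting the compactness lemma. The second assertion $\codim C = \min_j \codim C_j$ then follows easily from the first together with Lemma~\ref{thin}. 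The missing ingredient in your approach is precisely this compactness statement, which is a separate (standard, but not entirely trivial) fact about inverse limits of constructible sets in Noetherian spaces.
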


Interesting examples of cylinders arise as follows.  Let $V$ be a proper, closed subscheme of $X$ defined by an ideal sheaf $\mathcal{I}_V \subseteq \mathcal{O}_X$, and let
$\gamma\colon \Spec \C[[t]] \rightarrow X$ be an arc.  The pullback 
of $\mathcal{I}_V$ via $\gamma$ is either an ideal of the form $(t^{\alpha})$, for some non-negative integer $\alpha$, or the zero ideal.  In the former case, the \define{contact order} $\ord_{\gamma}(V)$ of $V$ along $\gamma$ is defined to be $\alpha$; in the latter case, $\ord_{\gamma}(V)$ is infinite by convention, and $\gamma$ lies in  $J_{\infty}V \subseteq J_\infty X$.  For each non-negative integer $e$, set 
\[
  \Cont^{\geq e}(V) = \{ \gamma \in J_{\infty}X \mid \ord_{\gamma}(V) \geq e \},
\]
so
$\Cont^{\geq 0}(V) = J_{\infty}X$ and $\Cont^{\geq e}(V) = \psi_{e - 1}^{-1}(J_{e - 1}V)$ for $e > 0$.  We see that $\Cont^{\geq e}(V)$ is a closed cylinder
and 
\[
  \Cont^{e}(V) = \{ \gamma \in J_{\infty}X \mid \ord_{\gamma}(V) = e \} 
 = \Cont^{\geq e}(V) \smallsetminus \Cont^{\geq e + 1}(V)
\]
is a locally closed cylinder. 

Cylinders of this form are called \define{contact loci}.  For each $m \ge e$, we let  
\[
 \Cont^{\geq e}(V)_m = \psi_m ( \Cont^{\geq e}(V) ) \quad \text{and} \quad  \Cont^{e}(V)_m = \psi_m ( \Cont^{e}(V) ),
\]
denote the loci of $m$-jets with contact order  with $V$ at least $e$ and precisely $e$, respectively.

If subvarieties $V_1,\ldots,V_s$ of $X$ are specified, along with an $s$-tuple of nonnegative integers $\mathbf{e} = (e_1,\ldots,e_s)$, we write
\begin{align*}
  \Cont^{\ge \mathbf{e}}(V_\bullet) &= \bigcap_{i = 1}^{s} \Cont^{\ge e_i} (V_i)  \\
\intertext{and}
 \Cont^{\mathbf{e}}(V_\bullet) &= \bigcap_{i = 1}^{s} \Cont^{e_i} (V_i)
\end{align*}
for the corresponding \define{multi-contact loci}.

\begin{remark}
Ein, Lazarsfeld and Musta{\c{t}}{\v{a}} \cite{ELMContact} gave a correspondence between closed, irreducible cylinders of $J_{\infty} X$ and divisorial valuations of the function field of $X$.
\end{remark}

We recall some results 
relating arc spaces and singularities \cite{MusJet, MusSingularities, EMInversion}.  
Let $X$ be a $\Q$-Gorenstein variety, and let $f\colon Y \rightarrow X$ be a resolution of singularities such that the exceptional locus $E = E_1 \cup \cdots \cup E_r$ is a simple normal crossings divisor.  The relative canonical divisor has the form $K_{Y/X} = \sum_{i = 1}^r a_i E_i$, for some 
integers $a_i$, and $X$ has \define{terminal}, \define{canonical}, or \define{log canonical} 
singularities if $a_i > 0$, $a_i \ge 0$, or $a_i \ge -1$, respectively, for all $i$. 

\begin{theorem}[{\cite[Theorem 1.3]{EMInversion}}]\label{maddog}
If $X$ is a normal, local complete intersection (l.c.i.) variety, then 
it has log canonical (canonical, terminal) singularities if and only if 
$J_m X$ is pure dimensional (irreducible, normal) for all $m \ge 0$.
\end{theorem}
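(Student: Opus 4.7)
The plan is to reduce the analysis of $J_m X$ to a stratification by multi-contact loci on a log resolution $f \colon Y \to X$, using the change-of-variables formula relating arcs of $Y$ to arcs of $X$ through the relative canonical divisor $K_{Y/X} = \sum a_i E_i$. Decompose $J_\infty Y$ (outside a thin subset) into the disjoint multi-contact loci $\Cont^{\mathbf{e}}(E_\bullet)$ for $\mathbf{e} \in \Z_{\geq 0}^r$; the key input (from Denef--Loeser and its refinement by Ein--Musta\c{t}\v{a}--Yasuda) is that for $m$ sufficiently large relative to $\mathbf{e}$, the restriction of $f_m$ to $\Cont^{\mathbf{e}}(E_\bullet)_m$ is a piecewise trivial fibration onto its image with fiber $\A^{\sum a_i e_i}$. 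Combined with $\codim_{J_\infty Y}\Cont^{\mathbf{e}}(E_\bullet) = \sum e_i$, this gives $\codim_{J_\infty X} f_\infty(\Cont^{\mathbf{e}}(E_\bullet)) = \sum(1 + a_i)e_i$.

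Since $X$ is l.c.i., Example \ref{e:equations} realizes $J_m X$ as an l.c.i.\ subscheme of a smooth jet scheme, so $\dim J_m X \ge (m+1)d$ and $J_m X$ is Cohen--Macaulay. The closure $\overline{J_m \sm(X)}$ is an irreducible component of dimension exactly $(m+1)d$. Any other irreducible component $W$ of $J_m X$ is dominated, via $f_m$, by an irreducible component of some $\Cont^{\mathbf{e}}(E_\bullet)_m$ with $\mathbf{e} \ne 0$, yielding $\dim W \le (m+1)d - \sum(1 + a_i)e_i$. The forward implications now follow: the condition $a_i \ge -1$ for all $i$ forces $\dim W \le (m+1)d$, so $J_m X$ is pure dimensional; the stronger condition $a_i \ge 0$ forces $\dim W < (m+1)d$, so $\overline{J_m \sm(X)}$ is the unique top-dimensional component and $J_m X$ is irreducible (using also Lemma~\ref{thin} to rule out smaller-dimensional extraneous components in the limit).

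For the converses, when some $a_i < -1$, $= -1$, or $= 0$, I would construct an offending stratum by taking $\mathbf{e}$ concentrated on $E_i$ with $e_i$ large enough that $f_m(\Cont^{\mathbf{e}}(E_\bullet)_m)$ exhibits, respectively, a component of dimension exceeding $(m+1)d$, a second component of dimension $(m+1)d$, or a codimension-one singular stratum. The main obstacle is the terminal/normal equivalence: Cohen--Macaulayness of $J_m X$ (inherited from the l.c.i.\ structure on $X$) supplies Serre's $S_2$ for free, so normality reduces to the $R_1$ condition $\codim(\sing(J_m X), J_m X) \ge 2$. Under $a_i \ge 1$ the codimension estimate $\sum(1+a_i)e_i \ge 2\sum e_i \ge 2$ gives the right bound on each stratum, but the delicate point is to show that $\sing(J_m X)$ is actually contained in the union of images $f_m(\Cont^{\mathbf{e}}(E_\bullet)_m)$ with $\mathbf{e} \ne 0$, uniformly in $m$; this containment, together with explicit constructions of codimension-one singularities in the presence of a discrepancy $a_i = 0$, is the technically heaviest part of the argument.
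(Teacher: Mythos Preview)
This theorem is not proved in the present paper; it is quoted from Ein--Musta\c{t}\v{a} \cite[Theorem~1.3]{EMInversion} (building on \cite{MusJet}), so there is no in-paper argument to compare your sketch against.  That said, your outline is broadly in the spirit of those papers, and I will comment on where it is solid and where it is not.

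The lower bound $\dim J_m X \ge (m+1)d$ from the l.c.i.\ hypothesis, Cohen--Macaulayness of $J_m X$, and the reduction of normality to $R_1$ via Serre's criterion are all correct and are used in the literature exactly as you describe.

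The gap is in your sentence ``Any other irreducible component $W$ of $J_m X$ is dominated, via $f_m$, by an irreducible component of some $\Cont^{\mathbf{e}}(E_\bullet)_m$ with $\mathbf{e}\neq 0$.''  This is not automatic: the map $f_m\colon J_m Y\to J_m X$ is in general \emph{not} surjective when $X$ is singular, and $J_m X$ can have components that are not contained in the closure of $\psi_m(J_\infty X)$ at all---such ``phantom'' $m$-jets do not extend to arcs and hence cannot be lifted to $Y$ by the valuative criterion.  Your stratification by contact loci on $Y$ only sees the jets that do lift, so it gives no direct upper bound on the dimension of these extra components.  This is precisely the place where the l.c.i.\ hypothesis must do real work beyond the dimension lower bound, and your sketch does not explain how.

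The published proofs circumvent this by embedding $X$ as a codimension-$c$ l.c.i.\ in a \emph{smooth} ambient variety $M$ and applying the change-of-variables formula on $M$ (where Theorem~\ref{change} applies verbatim) to a log resolution of the \emph{pair} $(M,X)$.  One then bounds $\codim(J_m X, J_m M)$ via contact loci in $J_\infty M$, and the inversion-of-adjunction statement---that $X$ is log canonical iff the pair $(M,cX)$ is---translates the discrepancy conditions on $X$ into the needed codimension bounds.  In this framework every point of $J_m X$ is visible, because $J_m M$ is smooth and the stratification lives there rather than on the singular $J_m X$.  Your approach of resolving $X$ directly can be made to work, but only after invoking the Ein--Musta\c{t}\v{a}--Yasuda fiber-dimension estimates for $f_m$ over singular targets together with a separate argument bounding the locus of non-liftable jets; you have named the first ingredient but not supplied the second.
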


\begin{remark}\label{pure-dim}
In general, the closure of $J_m\sm(X)$ (the jet scheme of the smooth locus) in $J_m X$ is an irreducible component of dimension $d(m+1)$.  Thus when $J_m X$ is pure-dimensional, its dimension is $d(m+1)$.
\end{remark}

\begin{remark}\label{Goren}
A result of Elkik \cite{ElkRationalite} and Flenner \cite{FleRational} implies that a Gorenstein variety has canonical singularities if and only if it has rational singularities. 
\end{remark} 

\begin{remark}
In fact, Musta{\c{t}}{\v{a}} proves that if $X$ is a normal, l.c.i. variety with canonical (equivalently, rational) singularities, then $J_m X$ is l.c.i., reduced, and irreducible for all $m \ge 0$. 
\end{remark}

Let $X$ be a smooth variety and let $V$ be a proper, closed subscheme.  An important invariant measuring the singularities of $V$ is  the \define{log canonical threshold} $\lct(X,V)$.  We refer the reader to \cite{MusSingularities} for details. 

\begin{theorem}[{\cite[Corollary 0.2]{MusSingularities}}]\label{lct}
If $X$ is a smooth variety and $V$ is a proper, closed subscheme, then 
\[
\lct(X,V) = \dim X - \max_m \frac{ \dim J_m V}{m + 1}.
\]
Moreover, the maximum is achieved for $m$ sufficiently divisible. 
\end{theorem}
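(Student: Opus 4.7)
The plan is to reduce to a calculation on a log resolution of the pair $(X,V)$, using the Denef--Loeser change-of-variables formula to transfer codimension counts from the arc space of the resolution (where the relevant loci are transparent) down to $J_\infty X$.

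First, I would fix a log resolution $f\colon Y\to X$ of $(X,V)$, so that $Y$ is smooth, $f$ is proper and birational, $f^{-1}(V) = \sum_{i=1}^r N_iE_i$ has simple normal crossings support, and $K_{Y/X} = \sum_i k_iE_i$. By definition, $\lct(X,V) = \min_i (k_i+1)/N_i$. I would then stratify $J_\infty Y$ by the multi-contact loci $\Cont^{\mathbf{s}}(E_\bullet)$ indexed by $\mathbf{s} = (s_1,\ldots,s_r)\in\Z_{\ge 0}^r$. Because each $E_i$ is a smooth divisor, each such stratum is an irreducible locally closed cylinder of codimension $\sum_i s_i$ in $J_\infty Y$. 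Since $f$ is proper and birational, $f_\infty\colon J_\infty Y \to J_\infty X$ is surjective up to a thin set supported over the exceptional locus of $f$, and for an arc $\gamma\in\Cont^{\mathbf{s}}(E_\bullet)$ the contact order of $f_\infty(\gamma)$ along $V$ equals $\sum_i s_i N_i$.

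The technical heart is the Denef--Loeser / Kontsevich change-of-variables formula. It says that for a cylinder $C\subseteq\Cont^{\mathbf{s}}(E_\bullet)$, the general fiber of $f_\infty|_C$ over its image in $J_\infty X$ has dimension $\sum_i s_ik_i$, whence
\[
\codim\bigl(f_\infty(C),J_\infty X\bigr) = \codim(C,J_\infty Y) + \sum_i s_ik_i = \sum_i s_i(k_i+1).
\]
Combining this across the countable family of strata, and using Lemma~\ref{l:codimension} together with the identity $\psi_m^{-1}(J_mV) = \Cont^{\ge m+1}(V)$, I would obtain
\[
\codim(J_mV,J_mX) \;=\; \min\Bigl\{\,\sum_i s_i(k_i+1) \;\Big|\; \mathbf{s}\in\Z_{\ge 0}^r,\ \sum_i s_i N_i \ge m+1\,\Bigr\}.
\]

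Finally, rearranging and using $\dim J_mX = (m+1)\dim X$ (Lemma~\ref{smootharc}), the right-hand side of the desired formula becomes $\min\{\sum_i s_i(k_i+1)/(m+1)\}$ subject to $\sum_i s_iN_i \ge m+1$. The weighted-mean estimate $\sum_i s_i(k_i+1)/\sum_i s_iN_i \ge \min_i(k_i+1)/N_i = \lct(X,V)$ gives one inequality for every $m$; equality is achieved whenever $m+1$ is divisible by some $N_{i_0}$ for an index $i_0$ realizing the minimum, by taking $\mathbf{s}$ supported at $i_0$ with $s_{i_0}N_{i_0} = m+1$. Passing to the maximum over $m$ yields both the stated equality and the ``sufficiently divisible'' clause. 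The main obstacle is the third step: the change-of-variables formula must be applied across countably many strata, and one must control the thin complement (arcs landing entirely in $V$, or outside $f_\infty(J_\infty Y)$) carefully enough that it does not affect the codimension count; this is exactly what Lemma~\ref{thin} and Lemma~\ref{l:codimension} are designed to ensure. Everything else is bookkeeping with linear inequalities.
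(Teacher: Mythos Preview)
The paper does not give its own proof of this statement: Theorem~\ref{lct} is quoted verbatim from Musta\c{t}\u{a}'s paper \cite{MusSingularities} as a background result, with no argument supplied. So there is no ``paper's proof'' to compare against.

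That said, your sketch is correct and is essentially Musta\c{t}\u{a}'s original argument. The one imprecision worth flagging is the phrase ``the general fiber of $f_\infty|_C$ over its image has dimension $\sum_i s_ik_i$'': at the arc level the induced map is a bijection on closed points outside a thin set (by the valuative criterion), and the fiber dimension statement lives at the finite jet level, exactly as in Theorem~\ref{change} (for $m\ge 2e$). This does not affect the codimension computation, which is carried out via the truncations anyway, but the wording conflates the two levels. Everything else---the stratification by multi-contact loci on the resolution, the resulting formula $\codim(J_mV,J_mX)=\min\{\sum_i s_i(k_i+1):\sum_i s_iN_i\ge m+1\}$, and the mediant-inequality optimization yielding $\min_i(k_i+1)/N_i$ with equality when $m+1$ is a multiple of the minimizing $N_{i_0}$---is right.
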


The following theorem, which was motivated by Kontsevich's theory of motivic integration, is the main ingredient in the proofs of the above results.  We will use it in Corollary~\ref{c:birational}.

\begin{theorem}[{\cite{DLGerms}}]\label{change}
Let $f\colon Y \rightarrow X$ be a proper, birational morphism between smooth varieties $Y$ and $X$. 
If $m \ge 2e$ are non-negative integers and $\Cont^e(K_{Y/X})_m \subseteq J_m Y$ denotes the locus of $m$-jets with contact order $e$ with the relative canonical divisor, then the restriction of the induced map
$f_m\colon J_m Y \rightarrow J_m X$ to  $\Cont^e(K_{Y/X})_m$, 
\[
 f_m\colon \Cont^e(K_{Y/X})_m \rightarrow f_m(\Cont^e(K_{Y/X})_m),
\]
is a Zariski-locally trivial fibration with fiber $\A^e$.
\end{theorem}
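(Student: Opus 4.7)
The plan is the original argument of Denef--Loeser: reduce to a local computation, linearize $f_m$ at an arc, and read off the fiber from the Smith normal form of the Jacobian matrix over $\C[[t]]$. Since the statement is local on $Y$, I would first pass to a neighborhood $U\subseteq Y$ admitting \'etale coordinates $y_1,\ldots,y_d$ together with an \'etale chart $x_1,\ldots,x_d$ on $X$ near $f(U)$, so that $f|_U$ is given by $x_i=f_i(y)$. In these coordinates the Jacobian $J(y)=\det(\partial f_i/\partial y_j)$ is a local equation for $K_{Y/X}$, and the contact order of an arc $\gamma$ with $K_{Y/X}$ equals $\ord_t J(\gamma(t))$.

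Next, fix $\gamma_m\in\Cont^e(K_{Y/X})_m\cap J_m U$ and lift it to an arc $\gamma$. A neighboring $m$-jet in the same fiber of $f_m$ has the form $\gamma_m+\delta$, where $\delta\in(\C[t]/(t^{m+1}))^d$ satisfies
\[
  f(\gamma(t)+\delta(t))\equiv f(\gamma(t))\pmod{t^{m+1}}.
\]
Taylor expansion rewrites this as
\[
  M(t)\,\delta(t)+R(\gamma(t),\delta(t))\equiv 0\pmod{t^{m+1}},
\]
where $M(t)=df_{\gamma(t)}$ and $R$ collects the terms of degree at least two in $\delta$. Over the discrete valuation ring $\C[[t]]$, the matrix $M$ admits a Smith decomposition $M=P\cdot\diag(t^{e_1},\ldots,t^{e_d})\cdot Q$ with $P,Q\in GL_d(\C[[t]])$ and $\sum_i e_i=\ord_t\det M=e$. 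The kernel of $M$ on $(\C[t]/(t^{m+1}))^d$ is therefore a $\C$-linear subspace of dimension $e$, all of whose vectors have $t$-adic order at least $m+1-\max_i e_i\ge m+1-e$.

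This is where the hypothesis $m\ge 2e$ enters. Each entry of $R(\gamma,\delta)$ is at least quadratic in the components of $\delta$, so if each component of $\delta$ has $t$-adic order $\ge m+1-e$, every entry of $R$ has order $\ge 2(m+1-e)\ge m+1$ and hence $R\equiv 0\pmod{t^{m+1}}$. The equation then collapses to $M(t)\,\delta(t)\equiv 0\pmod{t^{m+1}}$, identifying the fiber of $f_m$ through $\gamma_m$ with $\ker M\cong\A^e$. Moreover $\delta$ has order $\ge m+1-e>e$, so $\gamma_m+\delta$ agrees with $\gamma_m$ modulo $t^{e+1}$ and thus also lies in $\Cont^e(K_{Y/X})_m$.

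Finally, to promote this pointwise description to a Zariski-locally trivial fibration, I would stratify the image according to the elementary divisors $(e_1,\ldots,e_d)$ of $M$, which are constant on a constructible stratification, and choose $P$ and $Q$ algebraically on a Zariski refinement of each stratum; a uniform basis of $\ker M$ then trivializes $f_m$ over the image of each piece. The main obstacle is precisely this last step---producing a uniform Smith decomposition in families so that the fiberwise affine spaces assemble into an actual fibration---and it is the genuine content of the Denef--Loeser change-of-variables argument. The preceding pointwise analysis is essentially bookkeeping once the Smith form is in hand.
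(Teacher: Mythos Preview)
The paper does not supply its own proof of this statement: Theorem~\ref{change} is quoted from Denef--Loeser \cite{DLGerms} and used as a black box in \S\ref{arcjet} and in the proof of Corollary~\ref{c:birational}. So there is no argument in the paper to compare your proposal against; you are effectively sketching the original Denef--Loeser proof, which is the appropriate reference.

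As a sketch of that argument your outline is on the right track, but there is one genuine gap worth flagging. Your pointwise analysis shows that every $\delta\in\ker(M\bmod t^{m+1})$ solves $M\delta+R(\delta)\equiv 0$, but you do not argue the converse: why must \emph{every} solution of the nonlinear equation already lie in $\ker M$? A priori there could be $\delta$ of low $t$-adic order for which $M\delta$ and $R(\delta)$ cancel nontrivially. The standard way to close this is an order-by-order (Hensel-type) induction: writing $\delta$ in the Smith basis and comparing lowest-order terms forces each component to have order at least $m+1-e_i$, after which the quadratic estimate kicks in and the equation linearizes. Without this step you have only exhibited an $\A^e$ inside each fiber, not identified the fiber with $\A^e$. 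You correctly identify the family version of the Smith decomposition as the remaining serious point; that, together with the missing injectivity argument above, is where the real work in \cite{DLGerms} lies.
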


We conclude this section with a brief remark on analytification.  Any finite-type $\C$-scheme $X$ naturally determines a complex-analytic space $X^\an$ in the sense of \cite{GRbook}; in particular, one has $(J_mX)^\an$.  On the other hand, the \define{analytic jet schemes} of a complex-analytic space $Z$ may be defined analogously as
\[
  J^\an_mZ = \Hom_{\an}( \Spec \C[t]/(t^{m+1}), Z ),
\]
where $\Spec \C[t]/(t^{m+1})$ is considered as an analytic space in the obvious way.  Naturally, $J^\an_m$ is functorial for holomorphic maps of analytic spaces.  We will use the following lemma in the proof of Proposition~\ref{p:boundary}. 

\begin{lemma}\label{l:anal}
For a scheme $X$ of finite type over $\C$, we have $J^\an_m(X^\an) = (J_mX)^\an$.
\end{lemma}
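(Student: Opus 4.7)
The plan is to verify the identification locally via the explicit equations defining jet schemes, and then globalize by gluing along an affine open cover.

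First, I would establish that both constructions $X \mapsto (J_m X)^{\an}$ and $Z \mapsto J^{\an}_m Z$ are compatible with open immersions: if $U \subseteq X$ is open, then under the projection $\pi_m$ (resp.\ $\pi^{\an}_m$) the preimage of $U$ (resp.\ $U^{\an}$) is naturally identified with $J_m U$ (resp.\ $J^{\an}_m U^{\an}$). The algebraic statement is standard, and the analytic version follows from the characterization that a jet $\gamma\colon \Spec \C[t]/(t^{m+1}) \to Z$ factors through an open subspace $W\subseteq Z$ precisely when its closed point lies in $W$. In particular, if the statement holds on the members of an affine open cover $\{U_i\}$ of $X$, it holds globally, since the pieces $(J_m U_i)^{\an}$ and $J^{\an}_m(U_i^{\an})$ glue along their intersections in the same way.

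This reduces us to the case of $X \subseteq \A^n$ affine. For $X = \A^n$ itself, a direct calculation parallel to Example~\ref{affine1} identifies $J^{\an}_m(\C^n)$ with $\C^{(m+1)n}$: an analytic jet $\Spec \C[t]/(t^{m+1}) \to \C^n$ amounts to an $n$-tuple of polynomials in $t$ of degree at most $m$, whose $(m+1)n$ complex coefficients are unconstrained. This matches $(J_m \A^n)^{\an}$. For general affine $X = V(f_1,\ldots,f_r) \subseteq \A^n$, Example~\ref{e:equations} shows that $J_m X \subseteq J_m\A^n$ is cut out by the polynomials $f_i^{(k)} = D^k(f_i)$ in the coordinates $x_i^{(j)}$. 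Repeating the same substitution $x_i \mapsto \sum_j x_i^{(j)} t^j$ and extracting the coefficient of $t^k$ at the level of an analytic jet shows that $J^{\an}_m(X^{\an}) \subseteq \C^{(m+1)n}$ is the analytic closed subspace defined by precisely the same polynomials. Since the analytification of a closed subscheme of $\A^n$ is cut out in $\C^n$ by the same equations, we obtain $J^{\an}_m(X^{\an}) = (J_m X)^{\an}$ as closed analytic subspaces of $\C^{(m+1)n}$.

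The main technical point to handle with care is that the definition of $J^{\an}_m Z$ given in the paper is a priori only set-theoretic, so one must specify a natural analytic structure on it and verify that the identifications above are isomorphisms of analytic spaces rather than merely bijections of sets. The affine construction above supplies this structure locally through the defining equations, and the open-immersion compatibility ensures that it is well-defined globally and independent of the choice of embedding into affine space. No serious obstacle arises; once the reduction to the affine case is in place, the argument is essentially a formal bookkeeping exercise with the explicit equations of Example~\ref{e:equations}.
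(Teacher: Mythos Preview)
The paper states Lemma~\ref{l:anal} without proof; it is recorded at the end of \S\ref{arcjet} as a fact and then used in the proof of Proposition~\ref{p:boundary}. So there is no argument in the paper to compare against.

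Your proposal is a correct and natural way to supply the missing proof. The reduction to the affine case via compatibility of both constructions with open immersions, followed by the explicit identification through the equations of Example~\ref{e:equations}, is exactly the kind of verification the authors presumably had in mind when omitting the argument. Your remark that the paper's definition of $J^{\an}_m Z$ is a priori only a set is well taken: one either interprets it as the representing object for the obvious functor on analytic spaces, or (equivalently, and as you do) builds the analytic structure locally from the explicit affine model and checks it glues. Either way the affine computation you describe does all the work.
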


%%%%%%%%%%%%%%%%%%%%%%%%%%%%%%%%%%%%%%%%%%%%%%%%%
%%%%%%%%%%%%%%%%%%%%%%%%%%%%%%%%%%%%%%%%%%%%%%
%%%%%%%%%%%%%%%%%%%%%%%%%%%%%%%%%%%%%%%%%%%%%%
%%%%%%%%%%%%%%%%%%%%%%%%%%%%%%%%%%%%%%%%%%%%%%
\section{Equivariant geometry of jet schemes}\label{equivgeom}
%%%%%%%%%%%%%%%%%%%%%%%%%%%%%%%%%%%%%%%%%%%%%%
%%%%%%%%%%%%%%%%%%%%%%%%%%%%%%%%%%%%%%%%%%%%%%
%%%%%%%%%%%%%%%%%%%%%%%%%%%%%%%%%%%%%%%%%%%%%%
%%%%%%%%%%%%%%%%%%%%%%%%%%%%%%%%%%%%%%%%%%%%%%%%%

Let $G$ be a linear algebraic group acting on a smooth complex variety $X$.  Functoriality of $J_{m}$ (for $m$ in $\N\cup\{\infty\}$) implies that $J_{m}G$ is an algebraic group with an induced action on $J_{m} X$ (cf. \cite[Proposition 2.6]{IshArc}).  The main result of this section is Proposition~\ref{p:boundary}, which gives a sufficient condition for the stabilizer of a point in $J_m X$ to be contractible.

\begin{example}\label{ex:t-linear}
Let a torus $T$ act on $\A^n$ via the characters $\chi_1,\ldots,\chi_n$, i.e., $t\cdot(z_1,\ldots,z_n) = (\chi_1(t) z_1, \ldots, \chi_n(t) z_n)$.  Recall that $J_m\A^n$ is identified with $n$-tuples of truncated polynomials (i.e., elements of $\C[t]/(t^{m+1})$).  The characters also define homomorphisms $J_m T \to J_m\C^*$.  Identifying $J_m\C^*$ with truncated polynomials with nonzero constant term, $J_m T$ acts on $J_m\A^n$ by
\[
  \gamma\cdot (\xi_1,\ldots,\xi_n) = (\chi_1(\gamma)\xi_1,\ldots,\chi_n(\gamma)\xi_n),
\]
where the multiplication on the RHS is multiplication of truncated polynomials.

It is also convenient to identify $J_m\A^n$ with $n\times (m+1)$ matrices, with the entries in the $k$th column corresponding to the coefficients of $t^{k-1}$.  Under this identification, the zero section $T_0 \subseteq J_m T$ acts simply by scaling the $i$th row by $\chi_i(t)\in\C^*$.  The fixed subspace $(J_m\A^n)^{T_0}$ is identified with the rows where the corresponding character is zero; note that $(J_m\A^n)^{T_0}$ is the $m^{\textrm{th}}$ jet scheme $J_m(\A^n)^T$ of the fixed locus $(\A^n)^T$.

The same discussion holds for any (possibly disconnected) diagonalizable group $H$; for finite groups, of course, there is no difference between $J_m H$ and the zero section.
\end{example}

We refer the reader to \cite{BorLinear} and \cite{springer} for basic properties of linear algebraic groups.  In particular, we will need the following fact. 

\begin{lemma}\label{contractible}
Let $U$ be a complex unipotent group, and let $\mathfrak{u}$ denote its Lie algebra.  The exponential map $\exp\colon \mathfrak{u} \rightarrow U$ is an isomorphism of complex varieties.  In particular, $U$ is contractible.   \qedhere
\end{lemma}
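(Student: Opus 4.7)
The plan is to reduce to the model case of the full unipotent upper-triangular group, where everything can be computed by polynomial formulas, and then use naturality.

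First I would invoke the basic structure theorem for unipotent algebraic groups: any complex unipotent linear algebraic group $U$ admits a closed embedding $U \hookrightarrow U_n$, where $U_n \subseteq GL_n$ denotes the group of strictly upper-triangular unipotent matrices, and this embedding identifies $\mathfrak{u}$ with a Lie subalgebra of $\mathfrak{u}_n$, the strictly upper-triangular nilpotent matrices. This reduces the problem to two tasks: (i) prove the statement for $U_n$, and (ii) show that $\exp$ and $\log$ preserve the subgroup/subalgebra under the embedding.

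For task (i), I would simply observe that because every $N \in \mathfrak{u}_n$ is nilpotent with $N^n = 0$, the formal power series
\[
\exp(N) = \sum_{k=0}^{n-1} \frac{N^k}{k!}, \qquad \log(I+M) = \sum_{k=1}^{n-1} \frac{(-1)^{k+1} M^k}{k}
\]
are finite polynomial expressions in the matrix entries. They define morphisms of complex varieties $\exp\colon \mathfrak{u}_n \to U_n$ and $\log\colon U_n \to \mathfrak{u}_n$, and the classical identities $\exp(\log(I+M)) = I+M$ and $\log(\exp(N)) = N$ (which are universal formal identities in non-commuting variables, valid whenever the series terminate) show these morphisms are mutually inverse.

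For task (ii), I would use that $\exp$ and $\log$ are compatible with the adjoint action and with Lie brackets via the Baker–Campbell–Hausdorff formula, which again terminates in the nilpotent setting, so that $\exp(\mathfrak{u})$ is a closed subgroup with Lie algebra $\mathfrak{u}$. By uniqueness of the connected subgroup with prescribed Lie algebra (and connectedness of $U$, since unipotent groups are connected in characteristic zero), $\exp(\mathfrak{u}) = U$, and the restriction of $\log$ provides an algebraic inverse. The final claim that $U$ is contractible is then immediate: $\mathfrak{u}$ is a finite-dimensional complex vector space, hence contractible in the analytic topology, and the analytification of a variety isomorphism is a homeomorphism.

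The only mildly subtle point—and the step most worth being careful about—is verifying that the subgroup $\exp(\mathfrak{u}) \subseteq U_n$ really coincides with the given embedded $U$ rather than merely sitting inside it; once one appeals to the standard Lie-correspondence for closed connected subgroups of a unipotent group (or alternatively, to the BCH description of the group law on $\mathfrak{u}$), this is routine and the rest is formal.
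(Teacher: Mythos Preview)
Your argument is correct and is essentially the standard textbook proof. Note, however, that the paper does not actually prove this lemma: it is stated as a known fact, with a reference to Borel's and Springer's books on linear algebraic groups (the \verb|\qedhere| marks the statement itself as the end of the ``proof''). So there is no approach in the paper to compare against; your write-up supplies exactly the kind of argument those references contain.

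One small suggestion on the step you flagged as subtle: rather than invoking uniqueness of the connected subgroup with a given Lie algebra, you can finish more directly by dimension. Once you know $\exp\colon \mathfrak{u}_n \to U_n$ is an isomorphism of varieties and $\exp(\mathfrak{u}) \subseteq U$, the image $\exp(\mathfrak{u})$ is a closed irreducible subvariety of $U$ of dimension $\dim \mathfrak{u} = \dim U$; since $U$ is connected (hence irreducible), $\exp(\mathfrak{u}) = U$. This avoids any appeal to the Lie correspondence and keeps the argument purely at the level of varieties.
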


\noindent
Conversely, if $G$ is not unipotent, the quotient by its unipotent radical is a nontrivial reductive group; such a group retracts onto a maximal compact subgroup, so $G$ is not contractible.  In short, a linear algebraic group is contractible if and only if it is unipotent.

Let $e$ denote the identity element of $G$ and, for any $m \ge 0$,   consider the projection $\pi_m\colon J_m G \rightarrow G$ 
and the associated exact sequence of algebraic groups
\begin{align}\label{e:semidirect}
  1 \to \pi_m^{-1}(e) \to \J_m G \xrightarrow{\pi_m} G \to 1.
\end{align}
The zero section $s_m \colon  G \rightarrow \J_m G$ (see Section~\ref{arcjet})
identifies $\J_m G$ with the semidirect product $\pi_m^{-1}(e) \rtimes G$.

The following lemma is stated in the Appendix in \cite{MusJet}.

\begin{lemma}
For any $m \ge 0$, the kernel $\pi_m^{-1}(e)$ of the projection $\pi_m\colon  J_m G \to G$ is a unipotent group. \qedhere
\end{lemma}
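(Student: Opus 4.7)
The plan is to reduce to the case $G = GL_n$ via a faithful representation, and then to exhibit the kernel concretely as a group of unipotent matrices of larger size. Since every linear algebraic group admits a closed embedding into some $GL_n$, functoriality of $J_m$ gives a closed embedding $J_m G \hookrightarrow J_m GL_n$ of algebraic groups. Because $J_m$ preserves fiber squares, the fiber of $J_m G \to G$ over $e$ coincides with the intersection of $J_m G$ with the fiber of $J_m GL_n \to GL_n$ over $I$. Thus $\pi_m^{-1}(e)$ is a closed subgroup of $\pi_m^{-1}(I) \subseteq J_m GL_n$, and since closed subgroups of unipotent groups are unipotent, it suffices to treat $G = GL_n$.

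Next, I would identify $\pi_m^{-1}(I) \subseteq J_m GL_n$ explicitly. An $m$-jet in the kernel is a morphism $\Spec \C[t]/(t^{m+1}) \to GL_n$ sending the closed point to $I$, so it has the form $\gamma = I + tA_1 + t^2 A_2 + \cdots + t^m A_m$ with $A_i \in M_n(\C)$, and the group operation is matrix multiplication inside the $\C$-algebra $R = M_n(\C[t]/(t^{m+1}))$. The crucial observation is that $N := \gamma - I$ lies in the two-sided ideal generated by $t$, and therefore satisfies $N^{m+1} = 0$ in $R$.

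Now I would use the regular representation of $R$ on the free module $(\C[t]/(t^{m+1}))^n$, which as a $\C$-vector space has dimension $n(m+1)$. This yields a closed embedding of algebraic groups $\pi_m^{-1}(I) \hookrightarrow GL_{n(m+1)}(\C)$; under this embedding an element $\gamma = I + N$ acts as the identity plus a nilpotent $\C$-linear operator, hence as a unipotent matrix. Since every element of $\pi_m^{-1}(I)$ is unipotent, the group itself is unipotent, which completes the argument.

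The only slightly delicate point is checking that the regular representation really is a morphism of algebraic groups with closed image, but this is routine. A conceptually cleaner alternative, if one prefers to avoid the embedding into $GL_n$, is a filtration argument: the subgroups $K_k = \ker\bigl(\pi_{m,k}\colon J_m G \to J_k G\bigr)$ for $0 \le k \le m$ form a descending chain of closed normal subgroups of $K_0 = \pi_m^{-1}(e)$ with $K_m = \{e\}$, and reading off coefficients of $t^{k+1}$ identifies each successive quotient $K_k/K_{k+1}$ with the additive group $(\lieg,+) \cong \mathbb{G}_a^{\dim G}$. Thus $\pi_m^{-1}(e)$ is an iterated extension of vector groups and hence unipotent; either route finishes the proof.
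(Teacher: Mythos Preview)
Your proposal is correct. Both routes you sketch are valid, and in fact your \emph{alternative} filtration argument is exactly the proof the paper gives: induction on $m$ via the exact sequence
\[
  1 \to T_eG \to \pi_m^{-1}(e) \to \pi_{m-1}^{-1}(e) \to 1,
\]
together with the closure of unipotent groups under extensions.

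Your \emph{main} argument---reducing to $GL_n$ and then exhibiting $\pi_m^{-1}(I)$ inside $GL_{n(m+1)}$ as matrices of the form $I + \text{(nilpotent)}$---is a genuinely different route. It is more concrete: once you order the basis of $(\C[t]/(t^{m+1}))^n$ by descending powers of $t$, the matrix of $\gamma = I + tA_1 + \cdots + t^mA_m$ is block upper-triangular with identity blocks on the diagonal, so unipotence is visible without appealing to Kolchin's theorem. The advantage of the paper's inductive proof is that it avoids choosing an embedding and makes transparent that the successive graded pieces are copies of $\lieg$; the advantage of your embedding proof is that it gives an explicit faithful unipotent representation in one step, with no induction. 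Either is perfectly adequate here.
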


\noindent
The easy proof was related to us by  Musta{\c{t}}{\v{a}}; one uses induction on $m$, the exact sequence 
\[
1 \rightarrow T_e G  \rightarrow \pi_m^{-1}(e) \rightarrow \pi_{m - 1}^{-1}(e) \rightarrow 1, 
\]
and the fact that an extension of a unipotent group by another unipotent group is unipotent.

\begin{lemma}\label{l:max-torus}
Let $G$ be a linear algebraic group, with maximal torus $T$.  Then the zero section $T_0\subseteq J_m T \subseteq J_m G$ is a maximal torus of $J_m G$.
\end{lemma}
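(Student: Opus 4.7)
The plan is to exploit the semidirect product decomposition $J_m G = \pi_m^{-1}(e)\rtimes G$ together with the fact that $\pi_m^{-1}(e)$ is unipotent, both recorded just before the lemma. The key principle is that a torus cannot sit inside a unipotent group except trivially, so any torus in $J_m G$ is controlled by its image in $G$ under $\pi_m$.

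More concretely, let $S$ be any maximal torus of $J_m G$ containing $T_0$. First I would look at $\pi_m(S)\subseteq G$: it is a (closed, connected) subtorus, and since it contains $\pi_m(T_0)=T$ (a maximal torus of $G$), it must equal $T$. Next I would examine the kernel $K:=S\cap \pi_m^{-1}(e)$ of the restriction $\pi_m|_S\colon S\to T$. On the one hand $K$ is a closed subgroup of the torus $S$, so $K$ is diagonalizable; on the other hand $K$ is a closed subgroup of the unipotent group $\pi_m^{-1}(e)$, hence unipotent. A diagonalizable unipotent algebraic group is trivial, so $K=\{e\}$.

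It follows that $\pi_m|_S\colon S\to T$ is an isomorphism of algebraic groups. Since $T_0\subseteq S$ and $\pi_m|_{T_0}\colon T_0\to T$ is already an isomorphism (via the zero section $s_m$), a dimension count gives $\dim S=\dim T=\dim T_0$, so the inclusion $T_0\subseteq S$ is an equality. Thus the maximal torus $S$ containing $T_0$ is $T_0$ itself, proving that $T_0$ is maximal.

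There is essentially no obstacle here; the only subtlety to be careful about is the standard fact, which I would quote from \cite{BorLinear} or \cite{springer}, that a closed subgroup of a unipotent group is unipotent, and that a subgroup which is simultaneously a torus and unipotent must be trivial. Everything else is immediate from the splitting \eqref{e:semidirect}.
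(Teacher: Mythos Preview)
Your proof is correct and rests on the same observation as the paper's: any torus in $J_mG$ meets the unipotent kernel $\pi_m^{-1}(e)$ trivially and therefore projects isomorphically onto a torus in $G$, so maximality of $T_0$ follows by a dimension count against $T$. The paper packages this as a general statement about unipotent extensions $1\to U\to G'\to G\to 1$ (a torus in $G'$ is maximal iff its image in $G$ is maximal) and proves both directions, though only the direction you prove is actually needed for the lemma.
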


\begin{proof}
This is a general fact about unipotent extensions: Suppose $G = G'/U$, with $U\subseteq G'$ unipotent; then a torus in $G'$ is maximal if and only if its image in $G$ is maximal.  Since every torus in $G'$ intersects $U$ trivially, and hence maps isomorphically to $G$, one implication is obvious.  For the other, let $T'\subseteq G'$ be a maximal torus, let $T\subseteq G$ be a maximal torus containing the image of $T'$, and let $H'\subseteq G'$ be the preimage of $T$, so $H'$ is solvable.  Then $H'/U = T$, so a maximal torus of $H'$ has the same dimension as $T$.  It follows that $T$ is the image of $T'$.  (To obtain the statement of the lemma, put $G'=J_m G$ and $T'=T_0$.)
\end{proof}

\begin{proposition}\label{p:boundary}
Let $G$ be a connected linear algebraic group acting on a smooth variety $X$, and let $D\subseteq X$ be a $G$-invariant closed subset, with irreducible components $\{D_i\}$.  Assume that the action of $G$ on $X\setminus D$ has unipotent stabilizers.  Then $J_m G$ acts on $J_m X \setminus \bigcup J_m D_i$ with unipotent stabilizers.
\end{proposition}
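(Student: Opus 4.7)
The plan is to show that the stabilizer $(J_m G)_\gamma$ of any $\gamma \in J_mX \setminus \bigcup J_m D_i$ contains no non-trivial semisimple element; since a linear algebraic group is unipotent if and only if each of its elements is unipotent, this suffices. Fix a semisimple $g \in (J_m G)_\gamma$; I will deduce that $g = e$.

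As a first reduction, because $g$ is semisimple it is contained in some maximal torus of $J_m G$, and by Lemma~\ref{l:max-torus} every such maximal torus has the form $s_m(T)$ for $T$ a maximal torus of $G$. Hence $g = s_m(t)$ with $t \in T$, and the stabilizer condition becomes $s_m(t) \cdot \gamma = \gamma$. Setting $x = \pi_m(\gamma)$, equivariance of $\pi_m$ forces $t \in G_x$; the task reduces to proving $t = e$.

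The next step, which I view as the main technical step, is to show $\gamma \in J_m(X^t)$ by a local linearization. Because $t$ is semisimple and fixes $x$, lifting eigenvectors from $\mathfrak{m}_x/\mathfrak{m}_x^2 = T^*_xX$ yields $t$-equivariant formal---or, via Lemma~\ref{l:anal}, analytic---coordinates identifying a neighborhood of $x$ with a neighborhood of $0 \in V = T_xX$ on which $t$ acts by characters $\chi_1, \ldots, \chi_n$. Under this identification Example~\ref{ex:t-linear} applies: the $s_m(t)$-fixed $m$-jets of $V$ at $0$ are exactly the $m$-jets of $V^t$, and $V^t = T_x(X^t)$. Transporting back gives $\gamma \in J_m(X^t)$.

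Finally, I will derive a contradiction if $t \neq e$. The unipotent-stabilizer hypothesis forces $X^t \subseteq D$, since any $y \in X^t \setminus D$ would have the semisimple element $t$ in its unipotent stabilizer, forcing $t = e$. The fixed locus $X^t$ is smooth (fixed locus of the diagonalizable group $\overline{\langle t \rangle}$ acting on a smooth variety), so its connected component $Y$ through $x$ is an irreducible subvariety of $D$, hence contained in some $D_i$. Then $\gamma \in J_m Y \subseteq J_m D_i$, contradicting our choice of $\gamma$, so $t = e$ and $g = e$. The main obstacle, as noted, is the identification $(J_m X)^{s_m(t)} = J_m(X^t)$ near $x$, whose proof I expect to reduce cleanly to Example~\ref{ex:t-linear} once Lemma~\ref{l:anal} is used to pass to the analytic local picture.
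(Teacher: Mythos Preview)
Your overall strategy matches the paper's proof: reduce to a semisimple element lying in the zero section of a maximal torus, linearize locally near $x=\pi_m(\gamma)$, invoke Example~\ref{ex:t-linear} to identify the $s_m(t)$-fixed $m$-jets with $J_m(X^t)$, and then use irreducibility to land in a single $D_i$. The paper works with the whole diagonalizable subgroup $H_0=\Gamma\cap T_0$ rather than a single element $t$, but your use of $\overline{\langle t\rangle}$ is an equivalent device.

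There is, however, a genuine gap in your first reduction. You assert that ``by Lemma~\ref{l:max-torus} every such maximal torus has the form $s_m(T)$ for $T$ a maximal torus of $G$,'' and conclude directly that $g=s_m(t)$. Lemma~\ref{l:max-torus} only says that $s_m(T)$ \emph{is} a maximal torus of $J_mG$; it does not say every maximal torus arises this way, and in fact this is false. For instance, with $G=SL_2$ and $m=1$, conjugating $s_1(T)$ by a nontrivial element of the unipotent kernel $\pi_1^{-1}(e)$ yields a maximal torus not contained in the zero section $s_1(G)$. The repair is exactly what the paper does: since all maximal tori of $J_mG$ are conjugate, choose $c\in J_mG$ with $cgc^{-1}\in s_m(T)$; then $cgc^{-1}$ fixes $c\cdot\gamma$, and because each $D_i$ is $G$-invariant the subscheme $J_mD_i$ is $J_mG$-invariant, so $\gamma\in J_mD_i$ if and only if $c\cdot\gamma\in J_mD_i$. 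After this conjugation your argument goes through.

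A smaller point: your linearization step is correct in outline, but ``lifting eigenvectors'' by itself only produces formal coordinates, and you should say why this suffices for $m$-jets (it does, since $\pi_m^{-1}(x)$ depends only on $\mathcal{O}_{X,x}/\mathfrak{m}_x^{m+1}$), or else invoke the analytic slice theorem explicitly as the paper does, applied to the maximal compact subgroup of $\overline{\langle t\rangle}$.
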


\begin{proof}
We proceed by first reducing to the case where $G$ is a torus, and then to the case where $X$ is affine space.

Suppose the stabilizer $\Gamma \subseteq J_m G$ of $x_m \in J_m X$ is not unipotent, and let $\gamma_m \in \Gamma$ be a nontrivial semisimple element fixing $x_m$.  We wish to show that $x_m$ lies in $J_m D_i$, for some irreducible component $D_i \subseteq D$.

Choose a maximal torus $T\subseteq G$, so the zero section $T_0 \subseteq J_m T$ is a maximal torus in $J_m G$.  Since $\gamma_m$ is semisimple, it lies in a maximal torus of $J_m G$ (\cite[Theorem 6.4.5]{springer}).  Since all maximal tori are conjugate (\cite[Theorem 6.4.1]{springer}), there is an element $c\in J_m G$ such that $c\gamma_m c^{-1} \in T_0$.  This fixes $c\cdot x_m$, and since each irreducible component $D_i$ is $G$-invariant, $x_m$ lies in $J_m D_i$ if and only if $c\cdot x_m$ does.  Therefore we may assume $\gamma_m$ lies in the torus $T_0$.  Let $H_0 = \Gamma \cap T_0$ be the subgroup of $T_0$ fixing $x_m$; this is a diagonalizable group containing $\gamma_m$.  Write $H \subseteq T$ for its isomorphic image in $G$.

Let $x=\pi_m(x_m)\in X$.  By assumption, $\pi_m(\gamma_m)$ fixes $x$, so $x$ lies in $D$.  Let $K\subseteq H$ be the maximal compact subgroup.  Since $H$ is reductive, we have an equality of fixed point sets $X^H = X^K$.  Using the slice theorem (see \cite[I.2.1]{Audin} or \cite[Corollary 1.5]{KRlinear}) together with Lemma~\ref{l:anal}, we may replace $X$ with a $K$-invariant analytic neighborhood of $x$, and assume $X=\A^n$ with $H$ acting linearly by characters $\chi_1,\ldots,\chi_n$.  Since the fixed locus $(\A^n)^H$ is irreducible and contained in $D$, we have $(\A^n)^H \subseteq D_i$, for some $i$. Using Example \ref{ex:t-linear}, we conclude that $x_m \in (J_m\A^n)^{H_0} = J_m(\A^n)^H \subseteq J_m D_i$. 
\end{proof}

\begin{remark}
The use of the (non-algebraic) compact subgroup in the last paragraph of the proof may be slightly unsatisfying to some tastes.  However, a naive application of the natural algebraic replacement---the \'etale slice theorem---does not work, since \'etale maps do not preserve irreducibility.
\end{remark}

%%%%%%%%%%%%%%%%%%%%%%%%%%%%%%%%%%%%%%%%%%%%%%%%%
%%%%%%%%%%%%%%%%%%%%%%%%%%%%%%%%%%%%%%%%%%%%%%%%%
%%%%%%%%%%%%%%%%%%%%%%%%%%%%%%%%%%%%%%%%%%%%%%%%%
%%%%%%%%%%%%%%%%%%%%%%%%%%%%%%%%%%%%%%%%%%%%%%%%%
%%%%%%%%%%%%%%%%%%%%%%%%%%%%%%%%%%%%%%%%%%%%%%%%%
\section{Jet schemes and equivariant cohomology}\label{jet&eq}
%%%%%%%%%%%%%%%%%%%%%%%%%%%%%%%%%%%%%%%%%%%%%%%%%
%%%%%%%%%%%%%%%%%%%%%%%%%%%%%%%%%%%%%%%%%%%%%%%%%
%%%%%%%%%%%%%%%%%%%%%%%%%%%%%%%%%%%%%%%%%%%%%%%%%
%%%%%%%%%%%%%%%%%%%%%%%%%%%%%%%%%%%%%%%%%%%%%%%%%

In this section, we relate the equivariant cohomology ring $H^*_G X$
of a connected linear algebraic group $G$ acting on a smooth complex variety $X$ of dimension $d$, with 
the geometry of the jet schemes $J_m X$ of $X$, and 
prove a criterion for producing a geometric $\Z$-basis for $H^*_G X$.

We will use the following lemma freely throughout the rest of the paper; its proof is immediate from Lemma~\ref{l:homotopy} and the fact that when $X$ is smooth,
the morphisms $\pi_{m}\colon J_m X \rightarrow X$ and $\pi_m\colon  J_m G \rightarrow G$ are fiber bundles with contractible fibers (Lemma~\ref{smootharc}).  When $m = \infty$, we may and will define 
$H_{J_\infty G}^* J_\infty X$ to be $H_G^* J_\infty X$. 

\begin{lemma}\label{l:cohomology}
Let $X$ be a smooth $G$-variety.  For any $m \in \N \cup \{\infty\}$, we have isomorphisms
\begin{equation*}
H^*_G X  \xrightarrow{\sim} H_{G}^* \J_m X  \xrightarrow{\sim} H_{\J_m G}^* \J_m X. \qedhere
\end{equation*}
\end{lemma}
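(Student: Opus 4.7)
The plan is to deduce both isomorphisms as direct applications of Lemma~\ref{l:homotopy}, where the required homotopy equivalences are supplied by Lemma~\ref{smootharc}. I will treat the two isomorphisms in order, and handle $m < \infty$ and $m = \infty$ uniformly, except that the second isomorphism for $m = \infty$ is simply the definition $H_{J_\infty G}^* J_\infty X := H_G^* J_\infty X$.

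For the first isomorphism $H_G^* X \xrightarrow{\sim} H_G^* J_m X$, I would use the truncation map $\pi_m\colon J_m X \to X$ (with $\pi_\infty := \psi_0$ when $m = \infty$). The group $G$ acts on $J_m X$ via the zero section $s_m\colon G \hookrightarrow J_m G$, and by functoriality of $J_m$ the map $\pi_m$ is $G$-equivariant with respect to the identity $G \to G$. When $m < \infty$, iterating Lemma~\ref{smootharc} expresses $\pi_m$ as a composition of Zariski-locally trivial fibrations with fiber $\mathbb{A}^d$, and is therefore a homotopy equivalence; when $m = \infty$, Lemma~\ref{smootharc} directly asserts that $\psi_0$ is a Zariski-locally trivial fibration with contractible fibers. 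Since the identity $G \to G$ is trivially a homotopy equivalence, Lemma~\ref{l:homotopy} yields the first isomorphism.

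For the second isomorphism $H_G^* J_m X \xrightarrow{\sim} H_{J_m G}^* J_m X$ (for $m$ finite), I would apply Lemma~\ref{l:homotopy} to the identity $\id\colon J_m X \to J_m X$, which is equivariant with respect to the zero section $s_m\colon G \to J_m G$ (this is a homomorphism of algebraic groups by functoriality). The identity is a homotopy equivalence. Applying the already-established argument to $G$ itself (which is a smooth variety) shows that $\pi_m\colon J_m G \to G$ is a homotopy equivalence, and since $\pi_m \circ s_m = \id_G$, the zero section $s_m$ is a homotopy equivalence as well. Lemma~\ref{l:homotopy} then gives the second isomorphism.

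There is no real obstacle, as the proof is essentially a matter of verifying the hypotheses of Lemma~\ref{l:homotopy} in two situations; the only mildly delicate point is the case $m = \infty$, where $J_\infty G$ is not of finite type, but this is sidestepped by the stated convention that $H_{J_\infty G}^* J_\infty X$ is \emph{defined} to be $H_G^* J_\infty X$.
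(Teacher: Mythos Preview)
Your proposal is correct and follows essentially the same approach as the paper: both reduce the lemma to Lemma~\ref{l:homotopy}, with the homotopy equivalences supplied by Lemma~\ref{smootharc}. The only cosmetic difference is that the paper gestures at applying Lemma~\ref{l:homotopy} to the pair $(\pi_m\colon J_m X \to X,\; \pi_m\colon J_m G \to G)$ to get the composite isomorphism in one stroke, whereas you explicitly factor it into two applications (one for each arrow) using the zero section $s_m$ for the second; since $s_m$ and $\pi_m$ are homotopy inverses, this amounts to the same argument.
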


For a $G$-invariant (or $J_m G$-invariant) closed subvariety $Z \subseteq J_m X$, we let $[Z]$ denote the corresponding class in $H_G^*X$ under the isomorphism of Lemma~\ref{l:cohomology}.  
Observe that a closed cylinder $C = \psi_m^{-1} (S)$, for some $S \subseteq J_m X$, is $G$-invariant (or $J_{\infty} G$-invariant) if and only if $S$ is $G$-invariant (respectively, $J_m G$-invariant).  In this case, it follows from Lemma~\ref{l:cohomology} that there is a well-defined class $[C] = [S] \in H^*_G X$.

The following lemma is a direct application of  Lemma~\ref{l:basis} and Lemma~\ref{l:openBM}. 

\begin{lemma}\label{l:basis2}
Let $G$ be a connected linear algebraic group acting on a smooth complex variety $X$, with $D\subseteq X$ a $G$-invariant closed subset with irreducible components $D_1, \ldots, D_t$. 
Suppose there exists a filtration by $J_m G$-invariant closed subvarieties 
\[
 Z_{s} \subseteq \cdots \subseteq Z_{0} = J_m X \setminus \bigcup_i J_m D_i, 
\]
such that each $U_j = Z_j \setminus Z_{j + 1}$ has trivial equivariant Borel-Moore homology (see Definition~\ref{def:trivialBM}).  Setting $k = \min\{ \codim(Z_{s}, J_m X),\, \min\{\codim(J_m D_i, J_m X)\} \} - 1$, we have
\[
  H_G^{\leq 2k} X = \bigoplus_{\codim U_j \leq k} \Z\cdot [\overline{U}_{j}].
\]
\end{lemma}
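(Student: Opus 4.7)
The plan is to combine Lemma~\ref{l:basis} (applied to the smooth open $J_mG$-subvariety $Z_0 := J_mX \setminus \bigcup_i J_mD_i$) with the excision-type statement Lemma~\ref{l:openBM} (applied to the open inclusion $Z_0 \hookrightarrow J_mX$), and then pass to $H_G^*X$ via Lemma~\ref{l:cohomology}. By Lemma~\ref{smootharc}, $J_mX$ is smooth of dimension $(m+1)d$, so $Z_0$ is a smooth $J_mG$-variety of the same dimension, and both lemmas are available.

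First I would apply Lemma~\ref{l:basis} directly to $Z_0$ with the given $J_mG$-invariant filtration $Z_s \subseteq \cdots \subseteq Z_0$, whose strata $U_j = Z_j \setminus Z_{j+1}$ are assumed to have trivial equivariant Borel--Moore homology. This yields, for each $\ell < \codim(Z_s,Z_0) = \codim(Z_s,J_mX)$, the description
$$H_{J_mG}^{2\ell}Z_0 \;=\; \bigoplus_{\codim(U_j,Z_0)=\ell} \Z\cdot [U_j],$$
with the odd-degree pieces vanishing, and with codimensions in $Z_0$ equal to those in $J_mX$ since $Z_0$ is open.

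Next I would transfer this description from $Z_0$ to $J_mX$. Since the complement $\bigcup_i J_mD_i$ has dimension $(m+1)d - \min_i \codim(J_mD_i,J_mX)$, Lemma~\ref{l:openBM} makes the restriction map $\BH^{J_mG}_* J_mX \to \BH^{J_mG}_* Z_0$ an isomorphism in the relevant range; applying Poincar\'e duality on both sides (both are smooth of dimension $(m+1)d$) converts this into an isomorphism $H^{2\ell}_{J_mG}J_mX \cong H^{2\ell}_{J_mG}Z_0$ for $\ell \le \min_i\codim(J_mD_i,J_mX)-1$. Under this isomorphism $[\bar U_j]$ (the closure in $J_mX$, which is $J_mG$-invariant by continuity of the action) restricts to $[U_j]$. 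Combining the two bounds, for every $\ell \le k$ one gets
$$H_{J_mG}^{2\ell}J_mX \;=\; \bigoplus_{\codim U_j = \ell} \Z\cdot [\bar U_j],$$
and summing over $\ell \le k$ and identifying $H_G^*X = H_{J_mG}^*J_mX$ via Lemma~\ref{l:cohomology} produces the claimed formula.

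The only mildly delicate step is the bookkeeping for the Poincar\'e-duality shift: one must verify that the Borel--Moore range $k > 2\dim(J_mX\setminus Z_0)+1$ from Lemma~\ref{l:openBM}, when rewritten in cohomological degree $2\ell = 2(m+1)d - k$, really becomes $\ell \le \min_i\codim(J_mD_i,J_mX)-1$, so that intersecting with the bound from Lemma~\ref{l:basis} recovers exactly the value of $k$ in the statement.
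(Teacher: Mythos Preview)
Your proposal is correct and follows exactly the approach the paper intends: the paper's own proof is the single sentence ``a direct application of Lemma~\ref{l:basis} and Lemma~\ref{l:openBM},'' and you have correctly unpacked that application---applying Lemma~\ref{l:basis} to the smooth open $Z_0$, transferring to $J_mX$ via Lemma~\ref{l:openBM} and Poincar\'e duality, and then invoking Lemma~\ref{l:cohomology}. Your degree bookkeeping is right; the only cosmetic point is that what you call $[U_j]$ in $H^*_{J_mG}Z_0$ is really $[\bar U_j^{Z_0}] = [Z_j]$, matching the notation of Lemma~\ref{l:basis}.
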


\begin{remark}\label{rate}
Lemma~\ref{thin} implies that $\lim_{m \to \infty} \codim(J_m D_i , J_m X) = \infty$.  In fact,
Theorem~\ref{lct} implies that $\codim( J_m D_i , J_m X) \ge (m + 1) \lct(X,D_i)$, and equality is achieved for $m$ sufficiently divisible. 
\end{remark}

In order to state our results, we introduce the following notation. 
Recall from Example~\ref{ex:trivialBM} that a $G$-variety $S$ is an \define{affine family of $G$-orbits} if there is a smooth map $S \rightarrow \A^n$, and $S$ is identified with a geometric quotient of $G\times\A^n$ by some closed subgroup scheme over $\A^n$.

\begin{definition}\label{d:contractible}
Let $D\subseteq X$ be a $G$-invariant closed subset with irreducible components $D_1, \ldots, D_t$.  
A locally closed cylinder $C \subseteq J_{\infty} X$ is an \define{affine family of orbits} (with respect to $D$) if $C = \psi_m^{-1}(S)$ for some $S \subseteq J_m X$, such that $S \cap J_m D_i = \emptyset$ for all $i$, and $S$ is an affine family of $J_{m} G$-orbits.
\end{definition}

\begin{remark}\label{r:unistab}
With the notation above, suppose that $G$ acts on $X \setminus D$ with unipotent stabilizers. 
By Proposition~\ref{p:boundary} and Lemma~\ref{contractible}, the stabilizer of $x \in S \subseteq  J_m X \setminus \bigcup_i J_m D_i$ is contractible, so Example~\ref{ex:trivialBM} shows that $S$
has trivial equivariant Borel-Moore homology.  Moreover, for any $m' \ge m$, $\pi_{m',m}^{-1}(S) \subseteq J_{m'} X \setminus \bigcup_i J_{m'} D_i$ is smooth and hence has trivial equivariant Borel-Moore homology by Lemma~\ref{smootharc} and Lemma~\ref{l:homotopy}.
\end{remark}

\begin{definition}\label{d:paving}
With the notation of Lemma~\ref{l:basis2}, a decomposition  $J_\infty X \setminus J_\infty D = \bigcup_j U_j$ into a non-empty, disjoint union of cylinders is an \define{equivariant affine paving} if there exists a filtration  
\[
 J_\infty D \subseteq \cdots \subseteq Z_{j + 1} \subseteq Z_{j} \subseteq \cdots \subseteq Z_{0} = J_\infty X 
\]
by $J_\infty G$-invariant closed cylinders in $J_\infty X$ containing $J_\infty D$ such that $U_j = Z_j \setminus Z_{j+1}$ is an affine family of orbits. 
\end{definition}

We are now ready to present our first main theorem.

\begin{theorem}\label{t:basis}
Let $G$ be a connected linear algebraic group acting on a smooth complex variety $X$, with $D\subseteq X$ a $G$-invariant closed subset such that $G$ acts on $X\setminus D$ with unipotent stabilizers.  If $J_\infty X \setminus J_\infty D = \bigcup_j U_j$ is an equivariant  
affine paving, then 
\[
  H_G^*X = \bigoplus_j \Z\cdot [\overline{U}_j].
\]
\end{theorem}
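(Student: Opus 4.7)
The plan is to deduce Theorem \ref{t:basis} from the finite-dimensional version, Lemma \ref{l:basis2}, applied one degree range at a time after descending the equivariant affine paving of $J_\infty X$ to a sufficiently high jet level $J_m X$. By Lemma \ref{l:cohomology}, $H^*_G X \cong H^*_{J_m G}(J_m X)$ for every $m \in \N \cup \{\infty\}$, and the class $[\overline{U}_j]$ is defined via these isomorphisms from its image at any level $m$ large enough for the cylinder $U_j$ to be defined.

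Fix $k \ge 0$. Since the complement of $\bigsqcup_j U_j$ in $J_\infty X$ lies in the thin set $J_\infty D$, Lemma \ref{l:codimension} shows that only finitely many pieces---say $U_0,\ldots,U_N$---have codimension $\le k$, and moreover $\codim Z_{N+1} = \min_{j > N}\codim U_j > k$. By Lemma \ref{thin} (cf.\ Remark \ref{rate}), one can then choose $m$ large enough that $\codim(J_m D_i, J_m X) > k$ for every irreducible component $D_i \subseteq D$, and also large enough that each of $Z_0,\ldots,Z_{N+1}$ has the form $\psi_m^{-1}(\bar Z_{j,m})$ for some closed $J_m G$-invariant subvariety $\bar Z_{j,m} = \psi_m(Z_j) \subseteq J_m X$. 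Writing $U_j = \psi_{m_j}^{-1}(S_j)$ with $S_j$ an affine family of $J_{m_j}G$-orbits, the surjectivity of $\psi_m$ combined with $U_j = Z_j \setminus Z_{j+1}$ yields $\bar Z_{j,m} \setminus \bar Z_{j+1,m} = \psi_m(U_j) = \pi_{m,m_j}^{-1}(S_j)$ for $0 \le j \le N$; and the condition $S_j \cap J_{m_j}D_i = \emptyset$ from Definition \ref{d:contractible} implies that $\psi_m(U_j)$ is disjoint from $\bigcup_i J_m D_i$. Consequently the chain $\bar Z_{N+1,m} \subseteq \cdots \subseteq \bar Z_{0,m} = J_m X$, intersected with $J_m X \setminus \bigcup_i J_m D_i$, realizes a filtration to which Lemma \ref{l:basis2} applies.

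By Proposition \ref{p:boundary} and Remark \ref{r:unistab}---the point at which the unipotent-stabilizer hypothesis enters---each successive complement $\pi_{m,m_j}^{-1}(S_j)$ is smooth with trivial equivariant Borel-Moore homology. Since $\codim \bar Z_{N+1,m} > k$ and $\codim(J_m D_i, J_m X) > k$, Lemma \ref{l:basis2} yields
\[
  H_G^{\le 2k}(X) \;=\; \bigoplus_{\codim U_j \le k} \Z \cdot \bigl[\overline{\psi_m(U_j)}\bigr] \;=\; \bigoplus_{\codim U_j \le k} \Z \cdot [\overline{U}_j],
\]
the second equality being Lemma \ref{l:cohomology}. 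Letting $k \to \infty$ gives the theorem. The main obstacle is the bookkeeping in the middle paragraph: verifying that the arc-level paving descends to a finite filtration on $J_m X$ whose successive complements are exactly $\psi_m(U_j)$ and avoid $\bigcup_i J_m D_i$, so that Lemma \ref{l:basis2} applies cleanly. Once this descent is established, Proposition \ref{p:boundary} and Lemma \ref{thin} do the remaining work.
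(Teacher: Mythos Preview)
Your proposal is correct and follows essentially the same approach as the paper's own proof: fix a degree $k$, use Lemma~\ref{l:codimension} to reduce to finitely many $U_j$'s, choose $m$ large enough to descend the relevant cylinders and push the $J_m D_i$ out of range, then apply Remark~\ref{r:unistab} and Lemma~\ref{l:basis2}. The only differences are notational---you track the individual levels $m_j$ explicitly and write $\bar Z_{j,m}$ where the paper writes $Z_j' = \psi_m(Z_j)\setminus\bigcup_i J_m D_i$---but the argument is the same.
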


\begin{proof}
We assume the notation of Definition~\ref{d:paving}.  Fix a degree $k$, and note that the filtration is either finite or satisfies $\lim_{j\to\infty} \codim U_j = \infty$ by Lemma~\ref{l:codimension}; therefore the set $\{j \,|\, \codim U_j \leq k\}$ is always finite.  Let $s-1$ be the largest index in this finite set (so $\codim Z_{s} > k$ by Lemma~\ref{l:codimension}).  
Now choose $m$ large enough so that $Z_j = \psi_m^{-1}(\psi_m (Z_j))$ for $j \leq s$, and $U_j = \psi_m^{-1}(S_j)$ for $j<s$, where $S_j\subseteq J_m X \setminus \bigcup J_m D_i$ is an affine family of $J_m G$-orbits.  Also choose $m$ large enough so that $2\min\{\codim(J_m D_i, J_m X)\} >k$ (see Remark~\ref{rate}), where the $D_i$ are the irreducible components of $D$.  Setting $Z_j' = \psi_m(Z_j) \setminus  \bigcup_i J_m D_i$, we have a filtration of $J_m G$-invariant closed subvarieties
\[
 Z_{s}' \subseteq \cdots \subseteq Z_{0}' = J_m X \setminus  \bigcup_i J_m D_i, 
\]
such that each $\psi_m(U_j) = Z_j' \setminus Z_{j + 1}'$ has trivial equivariant Borel-Moore homology
by Remark~\ref{r:unistab}.  The result now follows from Lemma~\ref{l:basis2}. 
\end{proof}

\begin{remark}
If $G$ acts on a smooth variety $X$ with a free, dense open orbit $U$, then $G$ acts on $U$ with trivial, and hence unipotent, stabilizers.  Applications of Theorem~\ref{t:basis} of this type are given in Section~\ref{s:toric} and Section~\ref{s:gln}.  
\end{remark}

\begin{remark}
The simplest type of cylinder which is an affine family of orbits consists of a single $J_\infty G$-orbit in $J_\infty X$.  The existence of an equivariant affine paving involving only cells of this type is quite restrictive, however.  Indeed, suppose $X$ is compact and $G$ acts freely on $X\setminus D$.  The valuative criterion for properness \cite[Theorem II.4.7]{HarAlgebraic} implies that there is a bijection between $J_{\infty}G$-orbits of $J_{\infty} X \setminus  \bigcup_i J_\infty D_i$ and elements of the \emph{affine Grassmannian} $G((t))/G[[t]]$, and the latter is uncountable unless $G$ is diagonalizable.  Since our notion of paving assumes countably many orbits---in fact, finitely many in any given codimension---essentially the only examples of this type are compactifications of tori, i.e. toric varieties (see Section~\ref{s:toric}).
\end{remark}

For the remainder of the section, we will consider a proper, equivariant birational map $f\colon Y \rightarrow X$ between smooth $G$-varieties $Y$ and $X$, for some connected linear algebraic group $G$.  We will apply our results above to describe a method for comparing the $G$-equivariant cohomology of $X$ and $Y$. 

Suppose that $D\subseteq X$ is a $G$-invariant closed subset such that $G$ acts on $X\setminus D$ with unipotent stabilizers, and, with the notation of Definition~\ref{d:paving}, consider an equivariant affine paving $J_\infty X \setminus J_\infty D = \bigcup_j U_j$.  Recall that the relative canonical divisor $K_{Y/X}$ on $Y$ is the divisor defined by the vanishing of the 
Jacobian of $f\colon Y \rightarrow X$, and that $f_\infty\colon J_\infty Y \rightarrow J_\infty X$ denotes 
the morphism of arc spaces corresponding to $f$.  We say that the paving is \define{compatible} 
with $f$ if $f_{\infty}^{-1} (U_j) \subseteq \Cont^{e_j} (K_{Y/X})$ for some non-negative integer 
$e_j$ and for all $j$.  In this case, we will write $e_j = \ord_{f_{\infty}^{-1} (U_j) } (K_{Y/X})$. 

\begin{corollary}\label{c:birational}
Let $G$ be a connected linear algebraic group and let  $f\colon Y \rightarrow X$ be a proper, equivariant birational map between smooth $G$-varieties $Y$ and $X$.  Let $D\subseteq X$ be a $G$-invariant closed subset such that $G$ acts on $X\setminus D$ with unipotent stabilizers, and let $J_\infty X \setminus J_\infty D = \bigcup_j U_j$ be an equivariant affine paving which is compatible with $f$.  These data determine a bijection between $\Z$-bases of $H_G^* Y$ and $H_G^*X$, explictly given by
\[
  H_G^*Y = \bigoplus_j \Z\cdot [\overline{f_{\infty}^{-1}(U_j)}], \; \;  H_G^*X = \bigoplus_j \Z\cdot [\overline{U_j}].
\]
Moreover, $\codim U_j = \codim f_{\infty}^{-1}(U_j) + e_j$, where $e_j= \ord_{f_{\infty}^{-1} (U_j) } (K_{Y/X})$. 
\end{corollary}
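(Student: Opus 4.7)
The plan is to apply Theorem~\ref{t:basis} to $X$ for the decomposition $H_G^* X = \bigoplus_j \Z \cdot [\overline{U_j}]$, and then to verify that the pullbacks $f_\infty^{-1}(U_j)$ constitute an equivariant affine paving of $Y$ relative to a suitable closed set, yielding the parallel decomposition of $H_G^* Y$.

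First I would take $D_Y = f^{-1}(D) \cup \mathrm{Exc}(f)$, where $\mathrm{Exc}(f)$ denotes the exceptional locus of $f$.  Since $f$ is a proper, equivariant birational morphism between smooth varieties, $D_Y$ is $G$-invariant and $\mathrm{supp}(K_{Y/X}) \subseteq \mathrm{Exc}(f)$.  Because $Y \setminus D_Y \subseteq Y \setminus f^{-1}(D)$ maps equivariantly into $X \setminus D$, the stabilizer of any point $y \in Y \setminus D_Y$ is a closed subgroup of the unipotent stabilizer of $f(y)$, hence itself unipotent.  The compatibility hypothesis $f_\infty^{-1}(U_j) \subseteq \Cont^{e_j}(K_{Y/X})$ forces each $f_\infty^{-1}(U_j)$ to be disjoint from $J_\infty \mathrm{Exc}(f)$, and a short arc-theoretic check in the reverse direction yields the decomposition $J_\infty Y \setminus J_\infty D_Y = \bigsqcup_j f_\infty^{-1}(U_j)$, with $Z_j' = f_\infty^{-1}(Z_j) \cup J_\infty D_Y$ serving as the filtration required by Definition~\ref{d:paving}.

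For the crucial step, I would verify that each $f_\infty^{-1}(U_j)$ has trivial equivariant Borel-Moore homology.  Writing $U_j = \psi_m^{-1}(S_j)$ with $m \geq 2 e_j$ sufficiently large, Theorem~\ref{change} provides a $J_m G$-equivariant, Zariski-locally trivial $\A^{e_j}$-fibration $f_m^{-1}(S_j) \to S_j$, obtained by restricting the global fibration on $\Cont^{e_j}(K_{Y/X})_m$ (one checks that $f_m^{-1}(S_j) \subseteq \Cont^{e_j}(K_{Y/X})_m$ using compatibility).  Since $S_j$ is an affine family of $J_m G$-orbits and hence has trivial equivariant Borel-Moore homology by Remark~\ref{r:unistab}, Lemma~\ref{l:homotopy} transfers this vanishing along the smooth fibration to $f_m^{-1}(S_j)$.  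The hypotheses of Theorem~\ref{t:basis} are then satisfied on $Y$ (using Lemma~\ref{l:basis2} degree by degree), giving $H_G^* Y = \bigoplus_j \Z \cdot [\overline{f_\infty^{-1}(U_j)}]$.  The codimension formula drops out immediately from the $\A^{e_j}$-fibration together with $\dim J_m Y = \dim J_m X$ (since $f$ is birational), whence $\codim f_\infty^{-1}(U_j) = \codim U_j - e_j$.

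The main obstacle will be bookkeeping rather than any deep idea: one must carefully track the distinction between $J_\infty f^{-1}(D)$ and $J_\infty D_Y$ (which differ by contributions from the exceptional divisor), and verify that the fibration of Theorem~\ref{change} is genuinely compatible with the $J_m G$-actions before restricting to $S_j$.  Theorem~\ref{change} is the essential ingredient, simultaneously producing both the Borel-Moore vanishing needed on $Y$ and the codimension shift by $e_j$.
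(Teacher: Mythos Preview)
Your approach is essentially the same as the paper's: both arguments pull back the filtration via $f_\infty^{-1}$, invoke Theorem~\ref{change} to obtain the $J_mG$-equivariant $\A^{e_j}$-fibration $f_m^{-1}(S_j)\to S_j$, deduce trivial equivariant Borel--Moore homology for the pullback pieces, and then run Lemma~\ref{l:basis2} (together with Lemma~\ref{l:openBM}) degree by degree on $Y$.

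Two small points of overreach. First, the sentence ``the hypotheses of Theorem~\ref{t:basis} are then satisfied on $Y$'' is not literally correct: $f_\infty^{-1}(U_j)$ is an $\A^{e_j}$-bundle over an affine family of orbits, not itself an affine family of $J_mG$-orbits in the sense of Definition~\ref{d:contractible}, so Definition~\ref{d:paving} is not satisfied. You clearly understand this, since you immediately defer to Lemma~\ref{l:basis2}; the paper does the same, citing Lemmas~\ref{l:basis} and~\ref{l:openBM} directly rather than Theorem~\ref{t:basis}. Just drop the claim about Theorem~\ref{t:basis}. Second, enlarging the bad locus to $D_Y=f^{-1}(D)\cup\mathrm{Exc}(f)$ and checking unipotent stabilizers on $Y\setminus D_Y$ is unnecessary: the paper works with $f^{-1}(D)$ alone (noting $J_\infty f^{-1}(D)=f_\infty^{-1}(J_\infty D)$), and the Borel--Moore vanishing for $f_m^{-1}(S_j)$ is transferred from $S_j$ via the $\A^{e_j}$-fibration, so unipotence on $Y$ never enters. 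Your additions are harmless but superfluous.
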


\begin{proof}
The paving of Definition~\ref{d:paving},
\[
 J_\infty D \subseteq \cdots \subseteq Z_{j + 1} \subseteq Z_{j} \subseteq \cdots \subseteq Z_{0} = J_\infty X ,
\]
lifts to a chain of $J_\infty G$-invariant closed cylinders containing $J_\infty (f^{-1} (D))$:
\[
 f_\infty^{-1} (J_\infty D) = J_\infty (f^{-1} (D)) \subseteq \cdots \subseteq f_\infty^{-1}(Z_{j + 1}) \subseteq f_\infty^{-1}(Z_{j}) \subseteq \cdots \subseteq f_\infty^{-1}(Z_{0}) = J_\infty Y.
\] 
Here $f_\infty^{-1}(Z_{j}) \setminus  f_\infty^{-1}(Z_{j + 1}) = f_\infty^{-1}(U_{j})$, and $f^{-1} (D)$ denotes the scheme-theoretic inverse image of $D$.  

Fix a degree $k$ and  note that the set $\{j \,|\, \codim f_\infty^{-1}(U_j) \leq k\}$ is finite by Lemma~\ref{l:codimension}.  
Let $s - 1$ be an index greater than $\max(\{j \,|\, \codim f_\infty^{-1}(U_j) \leq k\})$ and $\max(\{j \,|\, \codim U_j \leq k\})$.  By the proof of Theorem~\ref{t:basis} and Lemma~\ref{l:basis2}, we may choose $m$ sufficiently large such that 
  we have a filtration of $J_m G$-invariant closed subvarieties
\[
Z_{s}' \subseteq \cdots \subseteq Z_{0}' = J_m X \setminus  \bigcup_i J_m D_i, 
\]
such that each $\psi_m^X(U_j) = Z_j' \setminus Z_{j + 1}'$ has trivial Borel-Moore homology and $U_j = \psi_m^{-1}(\psi_m^X(U_j))$.  Moreover, if $D_1, \ldots, D_t$ denote the irreducible components of $D$, then we may choose $m$ large enough so that $2\min\{\codim(J_m f^{-1}(D_i), J_m Y)\} >k$ (by Lemma~\ref{thin}) and $m \ge 2e_j$ for $0 \le j \le s - 1$.

Consider the filtration 
\[
f_{m}^{-1} (Z_{s}') \subseteq \cdots \subseteq f_{m}^{-1}(Z_{0}') =  J_m Y \setminus \bigcup_i J_m f^{-1}(D_i), 
\]
with $f_{m}^{-1} (Z_j') \setminus f_{m}^{-1} (Z_{j + 1}') = f_{m}^{-1} (\psi_m^X(U_j)) = \psi_m^Y( f_{\infty}^{-1} (U_j))$.  By Theorem~\ref{change}, the restriction 
$f_m\colon  f_m^{-1}(U_j) \rightarrow U_j$ is a $J_m G$-equivariant, Zariski-locally trivial 
fibration with fiber $\A^{e_j}$.  We conclude that $f_m^{-1}(U_j)$ has trivial equivariant 
Borel-Moore homology and $\codim U_j = \codim f_{\infty}^{-1}(U_j) + e_j$.  Using Lemma~\ref{l:basis} and Lemma~\ref{l:openBM}, we conclude that $H_G^{2k - 1} Y = 0$ and 
\[
  H_G^{2k} Y = \bigoplus_{\codim U_j + e_j = k } \Z\cdot [\overline{f_\infty^{-1}(U_j)}].
\]
The result now follows from Theorem~\ref{t:basis}. 
\end{proof}

In the succeeding two sections, we will give criteria to interpret multiplication in the equivariant cohomology ring geometrically.  An answer to the following question may be very useful in proving Conjecture~\ref{c:gln} (cf. Example~\ref{e:fullflag} and Remark~\ref{r:degrees}):

\begin{question}
Under suitable assumptions, can one compare the multiplication of classes in the $\Z$-bases of $H^*_G Y$ and $H^*_G X$ determined by Corollary~\ref{c:birational}?
\end{question} 

\begin{remark}
The relationship between the graded dimensions of $H_G^*(Y;\C)$ and  $H_G^*(X;\C)$, and the relative canonical divisor $K_{Y/X}$, would be predicted by an equivariant version of motivic integration. 
We say that two smooth $G$-varieties $X$ and $Y$ are \emph{equivariantly $K$-equivalent} if there is a smooth $G$-variety $Z$ and $G$-equivariant, proper birational maps $Z \rightarrow X$ and $Z \rightarrow Y$ such that $K_{Z/X} = K_{Z/Y}$. For example, one may consider equivariantly $K$-equivalent toric varieties with respect to the torus action  (cf. Section~\ref{s:toric}).  As in the non-equivariant case, one expects that if 
$X$ and $Y$ are equivariantly $K$-equivalent, then $\dim_{\C} H^i_G(X;\C) = \dim_{\C} H^i_G(Y;\C)$ for all $i \ge 0$.
\end{remark}

\begin{question}
Do there exist interesting examples of $G$-equivariantly $K$-equivalent varieties, where $G$ is non-trivial, other than $K$-equivalent toric varieties?
\end{question}

%%%%%%%%%%%%%%%%%%%%%%%%%%%%%%%%%%%%%%%%%%%%%%%%%
%%%%%%%%%%%%%%%%%%%%%%%%%%%%%%%%%%%%%%%%%%%%%%%%%
%%%%%%%%%%%%%%%%%%%%%%%%%%%%%%%%%%%%%%%%%%%%%%%%%
%%%%%%%%%%%%%%%%%%%%%%%%%%%%%%%%%%%%%%%%%%%%%%%%%
%%%%%%%%%%%%%%%%%%%%%%%%%%%%%%%%%%%%%%%%%%%%%%%%%
%%%%%%%%%%%%%%%%%%%%%%%%%%%%%%%%%%%%%%%%%%%%%%%%%
\section{Multiplication of classes I}\label{s:mult1}
%%%%%%%%%%%%%%%%%%%%%%%%%%%%%%%%%%%%%%%%%%%%%%%%%
%%%%%%%%%%%%%%%%%%%%%%%%%%%%%%%%%%%%%%%%%%%%%%%%%
%%%%%%%%%%%%%%%%%%%%%%%%%%%%%%%%%%%%%%%%%%%%%%%%%
%%%%%%%%%%%%%%%%%%%%%%%%%%%%%%%%%%%%%%%%%%%%%%%%%
%%%%%%%%%%%%%%%%%%%%%%%%%%%%%%%%%%%%%%%%%%%%%%%%%

In this section and the next, we use jet schemes to give a geometric interpretation of multiplication in the equivariant cohomology ring $H^*_G X$ of a smooth variety $X$ acted on by a connected linear algebraic group $G$.  We present two sets of results, with different assumptions on the singularities of subvarieties: the first concerns local complete intersection varieties (treated in this section), and the second requires the singular locus to be sufficiently small (discussed in the following section).

It will be convenient to introduce some terminology for this section.  A subvariety $V\subseteq X$ is an \define{equivariant complete intersection} if it has codimension $r$ and is the scheme-theoretic intersection of $r$ $G$-invariant hypersurfaces in $X$.  Similarly, $V \subseteq X$ is an \define{equivariant local complete intersection (e.l.c.i.)} if it is a local complete intersection variety locally cut out by $G$-invariant hypersurfaces.  Of course, a $G$-invariant l.c.i.~subvariety need not be e.l.c.i.: for example, the origin in $\C^n$ is not cut out by $GL_n$-invariant hypersurfaces (since there are no such hypersurfaces).

For a tuple of non-negative integers $\mm = (m_1,\ldots,m_s)$, let $\lambda(\mm)=(\lambda_1,\ldots,\lambda_s)$ be the partition defined by $\lambda_i = m_i+\cdots+m_s$.  The main theorem of this section is this:

\begin{theorem}\label{t:multiplication}
Assume the following:
\renewcommand{\theenumi}{\fnsymbol{enumi}}
\begin{enumerate}
\item $G$ is a connected reductive group, $X^G$ is finite, and the natural map $\iota^*\colon H_G^*X \to H_G^*X^G$ is injective. \label{hypothesis}
\end{enumerate}
Consider a chain of e.l.c.i. subvarieties
\[
  V_s \subseteq V_{s-1} \subseteq \cdots \subseteq V_1 \subseteq X,
\]
and a tuple $\mm = (m_1,\ldots,m_s)$ of non-negative integers.  If $\codim \Cont^{\ge \lambda(\mm)}(V_\bullet)  = \sum_{i = 1}^s m_i \codim V_i$, then 
\begin{equation}\label{e:mult}
[V_1]^{m_1}\cdots [V_s]^{m_s} = [\Cont^{\ge \lambda(\mm)}(V_\bullet)]
\end{equation}
\end{theorem}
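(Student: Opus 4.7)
The plan is to reduce (\ref{e:mult}) to an equality of restrictions at the $G$-fixed points, and then verify this equality by an explicit local computation using the e.l.c.i.\ structure. Since the hypothesis provides injectivity of $\iota^*\colon H_G^*X \to H_G^*X^G = \bigoplus_{p\in X^G} H_G^*(p)$, it suffices to compare both sides of (\ref{e:mult}) in $H_G^*(p)$ for each $p \in X^G$. Fix such a $p$; via the analytic slice theorem (available since $G$ is reductive), I would pass to a $G$-invariant neighborhood of $p$ modeled on the representation $T_pX$, with $p$ at the origin.

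Exploiting the chain $V_s \subseteq \cdots \subseteq V_1$ together with the e.l.c.i.\ hypothesis, after reordering so that $c_1 \leq \cdots \leq c_s$ (where $c_i = \codim V_i$), I would choose $G$-semi-invariant defining equations $f_1, \ldots, f_{c_s}$ near $p$ with $V_i$ locally cut out by $f_1, \ldots, f_{c_i}$; let $\chi_k$ denote the weight of $f_k$ and set $\ell_k = \min\{i : c_i \geq k\}$. Since each $V_i$ is a local complete intersection, $[V_i]|_p = \prod_{k \le c_i} \chi_k$ is the equivariant Euler class of its normal bundle. Using the identity $\sum_k \lambda_{\ell_k} = \sum_i m_i c_i$, a direct manipulation then yields
\[
  \prod_i [V_i]^{m_i}\Big|_p \;=\; \prod_k \chi_k^{\lambda_{\ell_k}}.
\]

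For the right-hand side, I would observe that locally
\[
  \Cont^{\ge \lambda(\mm)}(V_\bullet) \;=\; \bigcap_{k=1}^{c_s} \bigcap_{j=0}^{\lambda_{\ell_k}-1} \{f_k^{(j)} = 0\} \;\subseteq\; J_m X
\]
for $m$ sufficiently large. The codimension hypothesis of the theorem says the total number of equations $\sum_k \lambda_{\ell_k} = \sum_i m_i c_i$ matches the codimension, so this is a local complete intersection. Each hyperplane $\{f_k^{(j)} = 0\}$ is $G$-invariant, since $f_k^{(j)}$ is itself a $G$-semi-invariant of weight $\chi_k$ for the zero-section $G \subseteq J_m G$; its class in $H_G^*(J_m X) = H_G^* X$ is $\chi_k$. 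The complete intersection formula then yields $[\Cont^{\ge \lambda(\mm)}(V_\bullet)]|_p = \prod_k \chi_k^{\lambda_{\ell_k}}$, matching the LHS.

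The main obstacle will be reconciling $G$-equivariance with $J_\infty G$-equivariance: the multi-contact locus is $J_\infty G$-invariant, but the individual hyperplanes $\{f_k^{(j)} = 0\}$ typically are not (the $J_m G$-action mixes them in upper-triangular fashion via higher jet parameters). Justifying that the $G$-equivariant product of hyperplane classes correctly represents the $J_\infty G$-equivariant class of the intersection requires careful use of the identification $H_{J_m G}^*(J_m X) = H_G^*(J_m X) = H_G^* X$ from Lemma~\ref{l:cohomology}. A secondary technical point is producing consistent $G$-semi-invariant defining equations $f_1, \ldots, f_{c_s}$ adapted to the chain, which reduces to a Nakayama-type argument in the $G$-representation $\mathfrak{m}_p/\mathfrak{m}_p^2$ combined with the usual complete intersection extension of regular sequences.
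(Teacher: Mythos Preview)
Your reduction to fixed points via hypothesis \eqref{hypothesis} and the slice theorem is exactly the paper's route; the content lies entirely in the local computation over $\A^d$. However, the step you label a ``secondary technical point'' is where your argument actually breaks, while the step you call the ``main obstacle'' is not one.

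The gap: a nested regular sequence $f_1,\ldots,f_{c_s}$ with $V_i$ cut out by $f_1,\ldots,f_{c_i}$ need not exist, even for e.l.c.i.\ chains. Take $X=\A^3$ with $\C^*$ acting by weight $1$ on each coordinate, $V_1=\{x^2+y^2+z^2=0\}$, and $V_2=\{0\}$. Both are e.l.c.i.\ and hypothesis \eqref{hypothesis} holds. But $x^2+y^2+z^2\in\mathfrak m^2$, so any ideal $(x^2+y^2+z^2,f_2,f_3)$ has image in $\mathfrak m/\mathfrak m^2$ of dimension at most $2$ and can never equal $(x,y,z)$; your Nakayama argument detects this failure but cannot repair it. The paper's Proposition~\ref{p:affine-nested} sidesteps the issue by taking \emph{separate} defining equations $f_{i,1},\ldots,f_{i,r_i}$ for each $V_i$ and then using the containment $J_{\lambda_{i+1}-1}V_{i+1}\subseteq J_{\lambda_{i+1}-1}V_i$ \emph{at the jet level}---not any nesting of the original equations---to discard the redundant jet equations $f_{i,j}^{(k)}$ for $k<\lambda_{i+1}$. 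This leaves exactly $\sum_i m_i c_i$ equations, which the codimension hypothesis forces to be a complete intersection.

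Two smaller remarks. Your direct observation that $f_k^{(j)}$ is $G$-semi-invariant of weight $\chi_k$, hence $[\{f_k^{(j)}=0\}]=\chi_k$ as the equivariant first Chern class of the corresponding trivial line bundle, is correct and slightly cleaner than the deformation in Lemma~\ref{l:hypersurface}. And your worry about $G$- versus $J_mG$-equivariance is unnecessary: the multi-contact locus itself is $J_mG$-invariant, so its class lives in $H_{J_mG}^*(J_mX)=H_G^*(J_mX)$, and nothing stops you from computing that class there as a product of merely $G$-invariant hypersurface classes.
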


\begin{remark}\label{r:codimension-closure}
In the statement of the above theorem, observe that if the hypothesis $\codim \Cont^{\ge \lambda(\mm)}(V_\bullet)  = \sum_{i = 1}^s m_i \codim V_i$ holds for all tuples $\mm = (m_1,\ldots,m_s)$ of non-negative integers, then  $[\Cont^{\ge \lambda(\mm)}(V_\bullet)] = [\overline{\Cont^{\lambda(\mm)}(V_\bullet)}]$. 
\end{remark}

We will prove Theorem~\ref{t:multiplication} by reducing to the case of $\A^d$.  The assumption \eqref{hypothesis} is needed only for the reduction, so we do not require it in what follows, when $X=\A^d$.

Let $G$ act on $\A^d$ and let $V \subseteq \A^d$ be a $G$-invariant hypersurface, defined 
by $f \in  \C[x_1,\ldots,x_n]$.  Recall from  Example~\ref{e:equations} that 
$J_m V \subseteq J_m \A^d$ is defined by equations $\{ f^{(k)} \mid 0 \le k \le m \}$ 
in the variables $\{ x_i^{(k)} \mid 1 \le i  \le n,\, 0 \le k \le m \}$. 

\begin{lemma}\label{l:hypersurface}
For $0 \le k \le m$, the hypersurface $V^{(k)} := \{ f + f^{(k)}  = 0 \} \subseteq J_m \A^d$  is $G$-invariant, and under the isomorphism of Lemma~\ref{l:cohomology}, 
$[V^{(k)}] = [V] \in H_G^* \A^d$. 
\end{lemma}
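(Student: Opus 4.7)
The plan is to prove $G$-invariance first, by showing $f^{(k)}$ is a semi-invariant of the same character as $f$, and then to deduce the equality of classes via a $G$-equivariant one-parameter deformation between $\pi_m^{-1}(V)=\{f=0\}$ and $V^{(k)}=\{f+f^{(k)}=0\}$ inside $J_m\A^d$.

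Since $V$ is a $G$-invariant hypersurface and $G$ is connected, the principal ideal $(f)$ is $G$-stable, so $f$ is a semi-invariant for some character $\chi\colon G\to \C^*$.  To extend this to $f^{(k)}$, I would use the description of Example~\ref{e:equations}: $f^{(k)}$ is the coefficient of $t^k$ in $f(X(t))$ for $X(t)=\sum_{j=0}^{m}x^{(j)}t^j$, and $g\in G$ acts on $J_m\A^d$ by post-composition with the automorphism $g\colon \A^d\to\A^d$.  Applying the semi-invariance identity to $X(t)\in \A^d(\C[t]/(t^{m+1}))$ gives $f(g(X(t)))=\chi(g)\,f(X(t))$, and extracting the coefficient of $t^k$ on both sides yields $g\cdot f^{(k)}=\chi(g)\,f^{(k)}$.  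Thus $f+f^{(k)}$ is itself a semi-invariant of character $\chi$, and $V^{(k)}$ is $G$-invariant.

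Next, consider the hypersurface $W\subseteq \A^1\times J_m\A^d$ cut out by $f+s\,f^{(k)}=0$, where $s$ is the coordinate on $\A^1$ and $G$ acts trivially on $\A^1$.  By the previous step, $W$ is $G$-invariant, and a genuine hypersurface since $f\neq 0$.  For each $s\in\A^1$ the inclusion $\iota_s\colon J_m\A^d\hookrightarrow \A^1\times J_m\A^d$ meets $W$ properly in the divisor $W_s=\{f+s\,f^{(k)}=0\}$, which equals $\pi_m^{-1}(V)$ at $s=0$ and $V^{(k)}$ at $s=1$.  Each $\iota_s$ is a $G$-equivariant section of the projection $\A^1\times J_m\A^d\to J_m\A^d$, and the latter is a homotopy equivalence, so by Lemma~\ref{l:homotopy} the pullbacks $\iota_s^*$ on equivariant cohomology are all equal.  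Therefore $[V^{(k)}]=\iota_1^*[W]=\iota_0^*[W]=[\pi_m^{-1}(V)]$, and under the isomorphism $H_G^*(\A^d)\cong H_G^*(J_m\A^d)$ of Lemma~\ref{l:cohomology} this is exactly $[V]$.

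The one technical point is justifying that $\iota_s^*[W]=[W_s]$ at the level of equivariant classes; since the defining equation of $W$ restricts to a nonzero function on $\{s\}\times J_m\A^d$ for every $s$, the intersection is proper and this reduces to standard functoriality of equivariant first Chern classes of line bundles.  As a cleaner alternative one can bypass the deformation entirely: $\pi_m^{-1}(V)$ and $V^{(k)}$ are the vanishing loci of two equivariant global sections of the same equivariant line bundle on $J_m\A^d$ (the trivial bundle with $G$-action twisted by $\chi$), hence represent the same equivariant first Chern class.
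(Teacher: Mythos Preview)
Your proof is correct and follows essentially the same line as the paper's: both establish that $f^{(k)}$ is a semi-invariant of the same character as $f$ by extracting the $t^k$-coefficient of the identity $f(g\cdot X(t))=\chi(g)f(X(t))$, and both then use the one-parameter family $\{f+s\,f^{(k)}=0\}$ to connect $\pi_m^{-1}(V)$ to $V^{(k)}$ (the paper phrases the conclusion via flatness of the family over $\A^1$, you via homotopy-equivalence of the sections $\iota_s$, which amounts to the same thing here).  One small point: your claim that $f+s\,f^{(k)}$ restricts nonzero for every $s$ fails at $s=-1$ when $k=0$ (since $f^{(0)}=f$), so you should dispose of the trivial case $k=0$ separately as the paper does; your closing line-bundle argument is a clean alternative that sidesteps the family entirely.
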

\begin{proof}
The lemma is trivial when $k = 0$, so assume $k \ge 1$. 
Since $V$ is invariant, $g \cdot f = \lambda(g) f$ for some character $\lambda \colon  G \to \C^*$.  With the notation of Example~\ref{e:equations}, it follows from the definition of the action of $G$ on $J_m \A^d$ that 
\[
  (g \cdot f)\left(\sum_{k = 0}^m x_1^{(k)} t^k, \ldots, \sum_{k = 0}^m x_d^{(k)} t^k\right) 
  = \lambda(g) f\left(\sum_{k = 0}^m x_1^{(k)} t^k, \ldots, \sum_{k = 0}^m x_d^{(k)} t^k\right). 
\]
In particular, considering coefficients of $t^k$ on both sides gives 
$g \cdot f^{(k)} = \lambda(g) f^{(k)}$ for $0 \le k \le m$, and we conclude that $V^{(k)}$ is $G$-invariant. 

Let $\mathcal{V} \subseteq J_m\A^d \times \A^1$ be defined by the equation $f + \zeta f^{(k)} = 0$ (where $\zeta$ is the parameter on $\A^1$).  Thus $\mathcal{V} \to \A^1$ is an equivariant family of hypersurfaces in $J_m\A^d$, whose fibers at $\zeta=0$ and $\zeta=1$ are $V$ and $V^{(k)}$, respectively.  (The polynomials $f$ and $f^{(k)}$ involve different variables, so $f+\zeta f^{(k)}$ is never identically zero; hence each fiber has the same dimension.)  Since $\mathcal{V}$ is a hypersurface in an affine space, it follows that the projection $\mathcal{V} \to \A^1$ is flat; indeed, one easily checks that $\C[\mathcal{V}]$ is torsion free and hence free over $\C[\zeta]$.  We conclude that $[V^{(k)}]=[V]$.
\end{proof}

\begin{remark}\label{r:multi}
If $G \cong (\C^*)^r$ is a torus, then the equivariant cohomology class of a torus-invariant subvariety $V \subseteq \A^d$ is equal to its \emph{multi-degree} \cite[Chapter 8]{MSCombinatorial}. In this case, it follows from the description of $f^{(k)}$ as an iterated derivation of $f$ in Example~\ref{e:equations} that $V$ and $V^{(k)}$ have the same multi-degree, implying the above lemma.
\end{remark}

Recall that for a tuple of non-negative integers $\mm = (m_1,\ldots,m_s)$, we let $\lambda(\mm)=(\lambda_1,\ldots,\lambda_s)$ be the partition defined by $\lambda_i = m_i+\cdots+m_s$.  

\begin{proposition}\label{p:affine-nested}
Consider a chain of equivariant complete intersection subvarieties
\[
  V_s \subseteq V_{s-1} \subseteq \cdots \subseteq V_1 \subseteq \A^d,
\]
and a tuple $\mm = (m_1,\ldots,m_s)$ of non-negative integers.  If $\codim \Cont^{\ge \lambda(\mm)}(V_\bullet)  = \sum_{i = 1}^s m_i \codim V_i$, then 
\[
[V_1]^{m_1}\cdots [V_s]^{m_s} = [\Cont^{\ge \lambda(\mm)}(V_\bullet)] .
\]
\end{proposition}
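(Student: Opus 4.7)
The plan is to express both sides of (\ref{e:mult}) as classes of subschemes of the jet scheme $J_N\A^d$ (for $N = \lambda_1 - 1$) cut out by carefully chosen $G$-semi-invariant hypersurfaces, and to match them using Lemma~\ref{l:hypersurface}.

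First, I would fix $G$-semi-invariant defining equations $f_{i,1},\ldots,f_{i,c_i}$ for each $V_i$. Since each $V_i$ is an equivariant complete intersection, one has $[V_i] = \prod_{j=1}^{c_i}[\{f_{i,j} = 0\}]$ in $H_G^*(\A^d)$, so $\prod_{i=1}^s [V_i]^{m_i} = \prod_{i,j}[\{f_{i,j}=0\}]^{m_i}$. Passing to $J_N\A^d$ via Lemma~\ref{l:cohomology} and extending Lemma~\ref{l:hypersurface} from its affine family to the projective family $\{\zeta_0 f + \zeta_1 f^{(k)} = 0\}$ over $\P^1$, I would obtain $[\{f_{i,j}^{(k)} = 0\}] = [\{f_{i,j}=0\}]$ for every $k \ge 0$. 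For each pair $(i,j)$ I would then choose the index set $R_{i,j} = \{\lambda_{i+1},\lambda_{i+1}+1,\ldots,\lambda_i-1\}$ (with $\lambda_{s+1} := 0$), of cardinality $m_i$, and rewrite $[\{f_{i,j}=0\}]^{m_i} = \prod_{k \in R_{i,j}}[\{f_{i,j}^{(k)} = 0\}]$.

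Next, I would identify the subscheme $Z := \bigcap_{(i,j),\, k \in R_{i,j}}\{f_{i,j}^{(k)} = 0\}$ of $J_N \A^d$ with $\psi_N(\Cont^{\ge\lambda(\mm)}(V_\bullet))$. The latter is scheme-theoretically defined by all equations $f_{i,j}^{(k)} = 0$ with $0 \le k \le \lambda_i - 1$, and the selected subset omits those with $k < \lambda_{i+1}$. Using the containment $\mathcal{I}_{V_{i-1}} \subset \mathcal{I}_{V_i}$, each $f_{i-1,j'}$ is a polynomial combination of the $f_{i,j}$; applying $D^k$ and the Leibniz rule then places $f_{i-1,j'}^{(k)}$ in the ideal generated by $\{f_{i,j}^{(k')} : k' \le k\}$. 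Iterating down the chain shows that every omitted equation already lies in the ideal generated by the selected ones, so $Z = \psi_N(\Cont^{\ge\lambda(\mm)}(V_\bullet))$.

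Finally, the total number of selected equations is $\sum_i c_i(\lambda_i - \lambda_{i+1}) = \sum_i c_i m_i$, which by hypothesis equals $\codim \Cont^{\ge\lambda(\mm)}(V_\bullet)$. Since $J_N\A^d$ is affine, and hence Cohen-Macaulay, this codimension equality forces the selected $\{f_{i,j}^{(k)}\}$ to form a regular sequence, so the hypersurfaces intersect properly and $\prod_{i=1}^s[V_i]^{m_i} = [Z] = [\Cont^{\ge\lambda(\mm)}(V_\bullet)]$. The main obstacle will be the scheme-theoretic identification $Z = \psi_N(\Cont^{\ge\lambda(\mm)}(V_\bullet))$: carrying out the Leibniz propagation down the chain carefully, and verifying that the resulting complete intersection is generically reduced so that $[Z]$ carries no spurious multiplicities.
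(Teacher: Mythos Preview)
Your approach is essentially the same as the paper's. Both arguments select the equations $f_{i,j}^{(k)}$ with $\lambda_{i+1}\le k\le\lambda_i-1$, use the nesting $V_i\subseteq V_{i-1}$ to show that these already generate the full ideal of $\Cont^{\ge\lambda(\mm)}(V_\bullet)_N$, invoke the codimension hypothesis to conclude it is a complete intersection, and then apply Lemma~\ref{l:hypersurface} to identify each hypersurface class. Your explicit Leibniz-rule propagation is simply a hands-on version of the paper's one-line observation that $J_{\lambda_s-1}V_s\subseteq J_{\lambda_s-1}V_{s-1}$ forces the corresponding containment of ideals. Your extension of Lemma~\ref{l:hypersurface} to the projective family over $\P^1$ is a small but genuine clarification: the lemma as stated proves $[\{f+f^{(k)}=0\}]=[\{f=0\}]$, whereas the proof of the proposition uses it in the form $[\{f^{(k)}=0\}]=[\{f=0\}]$; passing to $[\zeta_0:\zeta_1]\in\P^1$ makes that step rigorous.

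Your closing worry about generic reducedness is unnecessary and you should drop it. Once the selected equations generate the same ideal as the full defining set, the two schemes are equal on the nose, and for effective Cartier divisors $D_1,\ldots,D_r$ meeting in the expected codimension in a smooth variety one always has $[D_1\cap\cdots\cap D_r]=[D_1]\cdots[D_r]$ as cycle classes; no reducedness hypothesis enters.
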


\begin{proof}
We will show that $\Cont^{\ge \lambda(\mm)}(V_\bullet)$ is an equivariant complete intersection.  Fix $m \ge \lambda_1 - 1$, so the equations defining $\Cont^{\ge \lambda(\mm)}(V_\bullet)$ are the same as those defining $\bigcap\pi_{m,\lambda_i-1}^{-1}(J_{\lambda_i-1}V_i)$ in $J_m\A^d$.  It will suffice to prove the claimed equation in $H_G^*J_m\A^d$.

For each $i$, let $r_i = \codim V_j$ and let $f_{i,1},\ldots,f_{i,r_i}$ be (semi-invariant) polynomials defining $V_i$.  Thus
\[
  \{ f_{i,j}^{(k)} \mid 1\leq j\leq r_i, \, 0\leq k\leq \lambda_i-1 \}
\]
defines $J_{\lambda_i-1}V_i$ in $J_{\lambda_i-1}\A^d$, as well as $\pi_{m,\lambda_i-1}^{-1}(J_{\lambda_i-1}V_i)$.

Now consider $V_s \subseteq V_{s-1}$.  Since $J_{\lambda_s-1}V_s \subseteq J_{\lambda_s-1}V_{s-1}$, we have a containment of ideals
\[
  ( f_{s,j}^{(k)} \mid 1\leq j\leq r_s, \, 0\leq k\leq \lambda_s-1 ) \supseteq  (f_{s-1,j}^{(k)} \mid 1\leq j\leq r_{s-1}, \, 0\leq k\leq \lambda_s-1 ).
\]
To cut out $\pi_{m,\lambda_s-1}^{-1}(J_{\lambda_s-1}V_s) \cap \pi_{m,\lambda_{s-1}-1}^{-1}(J_{\lambda_{s-1}-1}V_{s-1})$, then, we need the $m_s\cdot r_s$ equations
\[
  \{f_{s,j}^{(k)} \mid 1\leq j\leq r_s, \, 0\leq k\leq \lambda_s-1\}
\]
together with the $m_{s-1}\cdot r_{s-1}$ equations
\[
  \{f_{s-1,j}^{(k)} \mid 1\leq j\leq r_{s-1}, \, \lambda_s\leq k\leq \lambda_{s-1}-1\}.
\]
Continuing in this way, we obtain $\sum_{i=1}^s m_i\cdot r_i$ equations defining $\bigcap\pi_{m,\lambda_i-1}^{-1}(J_{\lambda_i-1}V_i)$; by hypothesis, this is the codimension of $\bigcap\pi_{m,\lambda_i-1}^{-1}(J_{\lambda_i-1}V_i)$, so it is a complete intersection.  It follows that
\[
  [ \bigcap\pi_{m,\lambda_i-1}^{-1}(J_{\lambda_i-1}V_i) ] = \prod_{i=1}^s \prod_{j=1}^{r_i} \prod_{k=\lambda_{i-1}}^{\lambda_i-1} [V_{i,j}^{(k)}],
\]
where $V_{i,j}^{(k)}\subseteq J_m\A^d$ is the $G$-invariant hypersurface defined by $f_{i,j}^{(k)}$.  By Lemma \ref{l:hypersurface}, the class $[V_{i,j}^{(k)}]$ is independent of $k$, and since $V_i$ is a complete intersection, we have $\prod_{j=1}^{r_i} [V_{i,j}^{(0)}] = [V_i]$.  The proposition follows.
\end{proof}

In practice, the codimension condition in the above proposition may be difficult to check.  It would be very interesting to have a nice answer to the following question. 

\begin{question}
Can one give a geometric criterion  for the codimension condition in Proposition~\ref{p:affine-nested} to be satisfied for all tuples $\mm = (m_1,\ldots,m_s)$ of non-negative integers?
\end{question}
  
  In the case when $V_s = V_1 = V \subseteq \A^d$, we have the following answer. 

\begin{corollary}\label{c:affine-CI}
Suppose $V\subseteq \A^d$ is an equivariant complete intersection.  Then $[J_m V] = [V]^{m+1}$ whenever $J_m V$ is pure-dimensional.  In particular, if $V$ is normal and $[V]$ is not nilpotent in $H^*_G X$, then this equation holds for all $m\geq 0$ if and only if $V$ has log canonical singularities.
\end{corollary}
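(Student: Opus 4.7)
The first assertion should follow directly from Proposition~\ref{p:affine-nested} by specializing to the one-element chain $V \subseteq \A^d$ with tuple $\mm = (m+1)$. The associated partition is $\lambda(\mm) = (m+1)$, so the multi-contact locus reduces to $\Cont^{\ge m+1}(V) = \psi_m^{-1}(J_m V)$, whose class (via Lemma~\ref{l:cohomology}) is $[J_m V]$. Since $V$ is cut out by $c = \codim V$ equivariant equations, $J_m V$ is defined by $(m+1)c$ equations in $J_m \A^d$, and the closure of $J_m \sm(V)$ is always an irreducible component of codimension $(m+1)c$. When $J_m V$ is pure-dimensional, all components have this codimension, so the hypothesis $\codim \Cont^{\ge \lambda(\mm)}(V_\bullet) = \sum m_i \codim V_i$ of Proposition~\ref{p:affine-nested} is satisfied, yielding $[V]^{m+1} = [J_m V]$.

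The sufficient direction of the ``in particular'' statement is then immediate: if $V$ is a normal l.c.i.\ with log canonical singularities, Theorem~\ref{maddog} guarantees that $J_m V$ is pure-dimensional for every $m \ge 0$, so the first part applies for each $m$.

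For the necessary direction, the plan is to argue by contrapositive. Assuming $V$ is not log canonical, Theorem~\ref{maddog} produces some $m_0$ for which $J_{m_0} V$ fails to be pure-dimensional; since the closure of $J_{m_0} \sm(V)$ supplies an irreducible component of codimension exactly $(m_0+1)c$, equidimensionality can only fail through the presence of a component of strictly smaller codimension. Interpreting $[J_{m_0} V]$ as the equivariant cycle class of its top-dimensional components (with multiplicities), this class then sits in cohomological degree $2\codim J_{m_0} V < 2(m_0+1)c$, while $[V]^{m_0+1}$ lies in degree $2(m_0+1)c$. Equality in $H_G^* \A^d$ therefore forces both sides to vanish, and the assumption that $[V]$ is not nilpotent rules this out. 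The main obstacle is essentially cosmetic: one must pin down the convention for $[J_m V]$ in the non-equidimensional case, which (as in Remark~\ref{r:multi}) will be taken as the sum of classes of top-dimensional components with multiplicities, so that $[J_m V]$ always lives in degree $2\codim J_m V$ and the degree-counting argument proceeds cleanly.
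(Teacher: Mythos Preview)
Your proof is correct and follows exactly the paper's approach: specialize Proposition~\ref{p:affine-nested} for the first claim and invoke Theorem~\ref{maddog} for the biconditional; the paper's own proof is a one-line citation of precisely these two results, so yours is just a more explicit unpacking (in particular, your Krull-height observation that non-equidimensionality forces a component of codimension strictly less than $(m+1)c$ is the right justification). One minor point: Remark~\ref{r:multi} does not actually establish the non-equidimensional convention you cite---it is about multi-degrees for torus actions---but your degree-counting argument for the necessary direction is sound regardless, and it makes transparent why the non-nilpotence hypothesis on $[V]$ is needed.
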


\begin{proof}
The first statement follows from Proposition~\ref{p:affine-nested}, and the second is immediate from Theorem~\ref{maddog}.
\end{proof}

\begin{proof}[Proof of Theorem \ref{t:multiplication}]
By hypothesis \eqref{hypothesis}, $H_G^*X$ embeds in $H_G^*X^G$, so it suffices to establish the formula \eqref{e:mult} after restriction to a fixed point $p\in X^G$.  Since $G$ is reductive, the  slice theorem gives a $G$-invariant (\'etale or analytic) neighborhood of $p$ equivariantly isomorphic to $\A^d$.  Now apply Proposition~\ref{p:affine-nested}.
\end{proof}

\begin{remark}\label{r:q-coeff}
If one uses $\Q$ coefficients for cohomology, the hypothesis \eqref{hypothesis} in Theorem~\ref{t:multiplication} can be replaced by the following:
\renewcommand{\theenumi}{$*'$}
\begin{enumerate}
\item $G$ is connected, and for a maximal torus $T\subseteq G$, $X^T$ is finite and the map $H_T^*X \to H_T^*X^T$ is injective.\label{hypothesis2}
\end{enumerate}
Moreover, we may assume that our subvarieties $\{ V_i \}$ are e.l.c.i with respect to $T$. 
Indeed, \eqref{hypothesis} applies to $T$, and $H_G^*X$ embeds in $H_T^*X$ as the subring of Weyl invariants, by a theorem of Borel. 
%\end{remark}

%\begin{remark}
Corollary \ref{c:affine-CI} also extends to e.l.c.i. subvarieties, using either hypothesis \eqref{hypothesis} or \eqref{hypothesis2}.
\end{remark}

The following variant is useful; it follows immediately from Theorem~\ref{t:multiplication}.  

\begin{corollary}\label{c:multiplication}
Assume hypothesis \eqref{hypothesis}, and let $Y_1, \ldots, Y_s$ be invariant subvarieties of $X$ such that each intersection $V_i = Y_1 \cap \cdots \cap Y_i$ is proper and e.l.c.i.  For a tuple $\mm = (m_1,\ldots,m_s)$ of non-negative integers, if $\codim \Cont^{\ge \mm}(Y_\bullet) = \sum_{i = 1}^s m_i \codim Y_i$, then
\[
[Y_1]^{m_1}\cdots [Y_s]^{m_s} = [\Cont^{\ge \mm}(Y_\bullet)]. 
\]
\end{corollary}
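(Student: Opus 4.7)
The plan is to deduce the corollary from Theorem~\ref{t:multiplication} applied to the nested chain $V_s \subseteq \cdots \subseteq V_1$.  Since cup product in $H^*_G X$ is commutative, and the multi-contact locus $\Cont^{\ge \mm}(Y_\bullet)$ depends only on the assignment $Y_i \mapsto m_i$, I first permute the $Y_i$ (together with the $m_i$) so that $m_1 \ge m_2 \ge \cdots \ge m_s$.  One must check that the partial intersections in the new ordering remain proper e.l.c.i.; properness follows from additivity of codimensions (since $\codim V_s = \sum_j \codim Y_j$ forces every partial intersection to be proper), and the e.l.c.i.~property follows by partitioning the local semi-invariant defining equations of $V_s$ according to which $Y_j$ they cut out.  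Now set $\mathbf{n} = (n_1, \ldots, n_s)$ with $n_i = m_i - m_{i+1}$ (convention $m_{s+1} := 0$); these are non-negative by monotonicity, and telescoping gives $\lambda(\mathbf{n}) = \mm$.

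Using $V_i = Y_1 \cap \cdots \cap Y_i$, an arc $\gamma$ satisfies $\ord_\gamma(V_i) \ge e$ iff $\ord_\gamma(Y_j) \ge e$ for every $j \le i$, so
\[
\Cont^{\ge \lambda(\mathbf{n})}(V_\bullet) = \Cont^{\ge \mm}(V_\bullet) = \bigcap_j \bigcap_{i \ge j} \Cont^{\ge m_i}(Y_j) = \bigcap_j \Cont^{\ge m_j}(Y_j) = \Cont^{\ge \mm}(Y_\bullet),
\]
where the penultimate equality uses $m_j = \max_{i \ge j} m_i$.  Swapping the order of summation,
\[
\sum_i n_i \codim V_i = \sum_j \codim Y_j \sum_{i \ge j} n_i = \sum_j m_j \codim Y_j,
\]
so the codimension hypothesis of the corollary translates exactly into the hypothesis of Theorem~\ref{t:multiplication} for the pair $(V_\bullet, \mathbf{n})$.

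Applying Theorem~\ref{t:multiplication} yields $[V_1]^{n_1}\cdots [V_s]^{n_s} = [\Cont^{\ge \mm}(Y_\bullet)]$.  Since each $V_i$ is a proper e.l.c.i.~intersection in the smooth $G$-variety $X$, an easy induction on $i$ gives $[V_i] = [Y_1]\cdots [Y_i]$ (at each step $[V_{i}] = [V_{i-1}] \cdot [Y_i]$, which holds by properness of the intersection together with the local complete intersection structure).  Telescoping,
\[
[V_1]^{n_1}\cdots [V_s]^{n_s} = [Y_1]^{n_1+\cdots+n_s}[Y_2]^{n_2+\cdots+n_s}\cdots[Y_s]^{n_s} = [Y_1]^{m_1}\cdots [Y_s]^{m_s},
\]
which completes the deduction.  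The main obstacle is the rearrangement step: invoking Theorem~\ref{t:multiplication} requires that the hypothesis be symmetric in the $Y_i$, i.e.~that partial intersections in \emph{any} ordering are again proper e.l.c.i.  This is not entirely automatic, and the key input is that one can locally choose semi-invariant defining equations for $V_s$ organized so that any desired subsequence of the $Y_j$'s is cut out as a local equivariant complete intersection.
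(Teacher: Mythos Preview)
Your argument is correct and matches the paper's approach: the paper simply asserts that the corollary ``follows immediately from Theorem~\ref{t:multiplication}'' and gives no further details.  You have supplied what the paper omits---the reordering so that $m_1\ge\cdots\ge m_s$, the set-theoretic identity $\Cont^{\ge\lambda(\mathbf n)}(V_\bullet)=\Cont^{\ge\mm}(Y_\bullet)$, the bookkeeping $\sum_i n_i\,\codim V_i=\sum_j m_j\,\codim Y_j$, and the factorization $[V_i]=[Y_1]\cdots[Y_i]$ (valid since each $V_i=V_{i-1}\cap Y_i$ is a proper, reduced, l.c.i.\ intersection, so the intersection multiplicity along every component is $1$).

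You are also right to single out the reordering step as the only place requiring care.  Your patch, however, is not fully justified as written: the hypothesis furnishes local semi-invariant equations for the nested $V_i$ in the \emph{given} order, not for the individual $Y_j$, and there is no general mechanism for reorganizing them so that an arbitrary subcollection cuts out the corresponding partial intersection as a complete intersection.  (Nothing in the stated hypothesis forces, say, $Y_2$ by itself to be e.l.c.i.)  The paper does not address this point either.  In the only application the paper makes (Example~\ref{e:ncd}, a $G$-invariant simple normal crossings divisor), each $Y_j$ is already a $G$-invariant hypersurface, so every partial intersection in every order is e.l.c.i.\ and the difficulty disappears; this is presumably the situation the authors had in mind.
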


\begin{example}\label{e:ncd}
Suppose $G$ and $X$ satisfy \eqref{hypothesis}, and let $D = D_1 + \cdots + D_s$ be a $G$-invariant normal crossings divisor in $X$.  Corollary~\ref{c:multiplication} and Remark~\ref{r:codimension-closure} apply, so
\[
[D_1]^{m_1}\cdots [D_s]^{m_s} = [\Cont^{\ge \mm}(D)] = [\overline{\Cont^{\mm}(D)}].
\]
\end{example}

%%%%%%%%%%%%%%%%%%%%%%%%%%%%%%%%%%%%%%%%%%%%%%%%%
%%%%%%%%%%%%%%%%%%%%%%%%%%%%%%%%%%%%%%%%%%%%%%%%%
%%%%%%%%%%%%%%%%%%%%%%%%%%%%%%%%%%%%%%%%%%%%%%%%%
%%%%%%%%%%%%%%%%%%%%%%%%%%%%%%%%%%%%%%%%%%%%%%%%%
%%%%%%%%%%%%%%%%%%%%%%%%%%%%%%%%%%%%%%%%%%%%%%%%%
%%%%%%%%%%%%%%%%%%%%%%%%%%%%%%%%%%%%%%%%%%%%%%%%%
\section{Multiplication of classes II}\label{s:mult2}
%%%%%%%%%%%%%%%%%%%%%%%%%%%%%%%%%%%%%%%%%%%%%%%%%
%%%%%%%%%%%%%%%%%%%%%%%%%%%%%%%%%%%%%%%%%%%%%%%%%
%%%%%%%%%%%%%%%%%%%%%%%%%%%%%%%%%%%%%%%%%%%%%%%%%
%%%%%%%%%%%%%%%%%%%%%%%%%%%%%%%%%%%%%%%%%%%%%%%%%
%%%%%%%%%%%%%%%%%%%%%%%%%%%%%%%%%%%%%%%%%%%%%%%%%

Replacing the assumption that $V\subseteq X$ be an equivariant local complete intersection with a restriction on the dimension of the singular locus of $V$, we can prove versions of the results of the previous section.  Throughout this section, $G$ is assumed to be reductive.

%Requiring that a subvariety be an equivariant local complete intersections is quite restrictive. In the general case, we have a partial result, which instead places a restriction on the dimension of the singular locus.  Let $\sm(V)$ and $\sing(V)$ denote the smooth and singular loci of a variety $V$, respectively.  Throughout this section, $G$ is assumed to be reductive.

In what follows, we will embed $X$ as a smooth subvariety of $J_m X$ via the zero section, and write 
$\Delta_{m+1}\colon X\hookrightarrow X \times \cdots \times X$ for the diagonal embedding of $X$ in the $(m+1)$-fold product.

\begin{lemma}\label{l:normal}
%If $X$ is a smooth $G$-variety of dimension $d$, then t
There are canonical isomorphisms 
\[  N_{X/J_m X} \cong N_{\Delta_{m + 1}/X \times \cdots \times X} \cong TX^{\oplus m}. \] 
\end{lemma}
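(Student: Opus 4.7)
The plan is to show that both $N_{X/J_m X}$ and $N_{\Delta_{m+1}/X^{m+1}}$ are canonically isomorphic to $TX^{\oplus m}$; composing yields the asserted chain.

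For the second isomorphism $N_{\Delta_{m+1}/X\times\cdots\times X} \cong TX^{\oplus m}$, I would use the standard fact that a section of a smooth morphism has normal bundle equal to the pullback of the relative tangent bundle. The first projection $p_0 \colon X^{m+1} \to X$ is smooth and admits $\Delta_{m+1}$ as a section. Identifying $p_0$ with the projection $X \times X^m \to X$, its relative tangent bundle is the pullback of $T(X^m) \cong TX^{\oplus m}$, so $N_{\Delta_{m+1}/X^{m+1}} \cong \Delta_{m+1}^* T_{p_0} \cong TX^{\oplus m}$.

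For the first isomorphism $N_{X/J_m X} \cong TX^{\oplus m}$, I would proceed by induction on $m$, with $m=0$ trivial. For $m \geq 1$, the zero section $s_m$ is a section of the smooth morphism $\pi_m$, so $N_{X/J_m X} \cong s_m^* T_{\pi_m}$. Factoring $\pi_m = \pi_{m-1,0} \circ \pi_{m,m-1}$ and pulling back the relative-tangent-bundle exact sequence via $s_m$ (using $\pi_{m,m-1} \circ s_m = s_{m-1}$) gives
\[ 0 \to TX \to N_{X/J_m X} \to N_{X/J_{m-1}X} \to 0, \]
where the kernel is identified with $TX$ by Lemma~\ref{l:reltan} and the cokernel is $TX^{\oplus(m-1)}$ by induction. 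To split this canonically I would invoke the natural $\C^*$-action on $J_m X$ induced by the scaling action $t \mapsto \lambda t$ on $\Spec \C[t]/(t^{m+1})$: this action fixes $s_m(X)$ and acts with weight $k$ on the coefficient of $t^k$. Thus $s_m^* T_{\pi_m}$ decomposes into weight spaces of weights $1,\ldots,m$, each canonically a copy of $TX$; the kernel sits in weight $m$ while the cokernel occupies weights $1,\ldots,m-1$, so the extension splits by disjoint-weight considerations, yielding $N_{X/J_m X} \cong TX^{\oplus m}$.

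The main technical point is ensuring the splitting in the inductive step is canonical and globally defined, rather than only local to a trivialization of the affine-bundle structure $\pi_{m,m-1}$. The $\C^*$-action gives a clean resolution since the weight decomposition is intrinsic to the functor $J_m$ and independent of coordinates; equivalently one may simply observe that $\C[t]/(t^{m+1})$ is graded and transport this grading. Once both normal bundles are canonically identified with $TX^{\oplus m}$, the chain of isomorphisms in the lemma is immediate.
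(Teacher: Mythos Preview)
Your argument is correct, but the paper takes a more direct route. Rather than induction and a $\C^*$-weight splitting, the paper uses the functor-of-points identification
\[
  T(J_m X) = \Hom(\Spec \C[s,t]/(s^2,t^{m+1}),X)
\]
to write $T_x(J_mX) \cong (T_xX)^{m+1}$ explicitly at any point $x$ of the zero section: a tangent vector is an algebra map $\theta(y)=\theta_0(y)+\sum_{i=0}^m \phi_i(y)\,st^i$ with each $\phi_i\in T_xX$. The zero section sits as the subspace $\{\phi_1=\cdots=\phi_m=0\}$, while the diagonal in $X^{m+1}$ sits as $\{\phi_0=\phi_1=\cdots=\phi_m\}$, and the paper writes down the explicit linear isomorphism $(\phi_0,\ldots,\phi_m)\mapsto(\phi_0,\phi_0-\phi_1,\ldots,\phi_0-\phi_m)$ between the two quotients.

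What each approach buys: your structural argument (section of a smooth map, Lemma~\ref{l:reltan}, $\C^*$-grading) is cleaner for the lemma as stated and makes canonicity transparent. The paper's explicit pointwise description, however, is what is actually used downstream: Remark~\ref{r:normal} extends the same computation to identify the normal bundle of $V_s$ inside $\Cont^{\ge\lambda(\mm)}(V_\bullet)_m$ with its normal bundle in $V_s^{m_s}\times\cdots\times V_1^{m_1}\times X^{m+1-\lambda_1}$, and this refinement is the key input to Theorem~\ref{t:smooth}. Your weight-space decomposition could be adapted to that setting (tracking which weights lie in $J_{\lambda_i-1}V_i$), but the paper's hands-on identification makes that step immediate.
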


\begin{proof}
By the functorial definition of jet schemes, 
\[
T (J_m X) = \Hom(\Spec \C[s]/(s^2), J_m X) =  \Hom(\Spec \C[s, t]/(s^2, t^{m + 1}), X),
\] 
so for a closed point $x$ in $X$, a vector in $T_x J_m X$ corresponds to a $\C$-algebra homomorphism
\[
\theta\colon  \mathcal{O}_{X,x} \rightarrow \C[s, t]/(s^2, t^{m + 1}), \; \theta(y) = \theta_0(y) + \sum_{i = 0}^{m} \phi_i(y) st^i,
\]
where $\theta_0\colon   \mathcal{O}_{X,x} \rightarrow \C$ is the $\C$-algebra homomorphism corresponding to $x$. That $\theta$ is a $\C$-algebra homomorphism is equivalent to requiring that $\theta_0(y) + s\phi_i(y)$ is a closed point in $T_x X$ for $0 \le i \le m$.
We therefore have a natural isomorphism
\begin{equation*}%\label{eq:jet-tangents}
  T_x J_m X \cong T_x X \times \cdots \times T_x X .
\end{equation*}
Moreover, identifying $X$ with the zero section, we have an embedding of $T_x X$ in $T_x J_m X$ whose image corresponds to the subspace where $\phi_1 = \cdots = \phi_m = 0$. 
On the other hand, 
\[
T_x (X \times \cdots \times X) \cong T_x X \times \cdots \times T_x X,
\]
and $T_x \Delta_{m + 1}$ is the image of $T_x X$ under the diagonal embedding in 
$T_x X \times \cdots \times T_x X$. 
Hence (with a slight abuse of notation)
\[
N_{X/J_m X, x} = \{ (\phi_0, \phi_1, \ldots, \phi_m) \mid \phi_i \in T_x X \} /  \{ (\phi, 0, \ldots, 0) \mid \phi \in T_x X \},
\]
\[
N_{\Delta_{m + 1}/X \times \cdots \times X, x} = \{ (\phi_0', \phi_1', \ldots, \phi_m') \mid \phi_i' \in T_x X \} /  \{ (\phi', \phi', \ldots, \phi') \mid \phi' \in T_x X \},
\]
and there is a natural isomorphism sending 
\[
 (\phi_0, \phi_1, \ldots, \phi_m) \mapsto (\phi_0, \phi_0 - \phi_1, \ldots, \phi_0 - \phi_m). 
\]
One easily verifies that this extends to a canonical global isomorphism. 
\end{proof}

For the remainder of the section we consider a chain of invariant irreducible 
subvarieties
\[
  V_s \subseteq V_{s-1} \subseteq \cdots \subseteq V_1 \subseteq X,
\]
and a tuple $\mm = (m_1,\ldots,m_s)$ of non-negative integers with $m_s > 0$.  Recall that 
$\lambda(\mm)=(\lambda_1,\ldots,\lambda_s)$ denotes the partition defined by 
$\lambda_i = m_i+\cdots+m_s$, and $\Cont^{\ge \lambda(\mm)}(V_\bullet)$ denotes the associated 
multi-contact locus.  If $U = X \setminus \bigcup_i \sing(V_{i})$, then 
$\Cont^{\ge \lambda(\mm)}(V_\bullet)$ restricts to a smooth, irreducible cylinder in 
$J_\infty U$.  The closure of this restricted cylinder in $J_\infty X$ is an irreducible 
cylinder of codimension $\sum_i m_i \codim V_i$ which we denote by 
$\Cont^{\ge \lambda(\mm)}\sm(V_\bullet)$.

\begin{remark}\label{r:normal}
Consider a chain of invariant smooth subvarieties
\[
  V_s \subseteq V_{s-1} \subseteq \cdots \subseteq V_1 \subseteq X,
\]
and a tuple $\mm = (m_1,\ldots,m_s)$ of non-negative integers with $m_s > 0$.  Fix $m \ge \lambda_1 - 1$, so that $\Cont^{\ge \lambda(\mm)}(V_\bullet)_m = \bigcap\pi_{m,\lambda_i-1}^{-1}(J_{\lambda_i-1}V_i)$ in $J_m X$.  The proof of Lemma~\ref{l:normal} gives a canonical isomorphism between the normal bundle of $V_s$, embedded via the zero section in $\Cont^{\ge \lambda(\mm)}(V_\bullet)_m$, and the normal bundle of $V_s$, embedded via the diagonal embedding
in $\underbrace{V_s \times \cdots \times V_s}_{m_s \textrm{ times } } \times \cdots \times\underbrace{V_1 \times \cdots \times V_1}_{m_1 \textrm{ times } }  \times \underbrace{X \times \cdots \times  X}_{m + 1 - \lambda_1 \textrm{ times } }$. 
\end{remark}

\begin{theorem}\label{t:smooth}
Let $X$ be a smooth $G$-variety of dimension $d$, and consider a chain of invariant 
subvarieties
\[
  V_s \subseteq V_{s-1} \subseteq \cdots \subseteq V_1 \subseteq X,
\]
and a tuple $\mm = (m_1,\ldots,m_s)$ of non-negative integers.  We have
\begin{equation*}
[V_1]^{m_1}\cdots [V_s]^{m_s} = [\Cont^{\ge \lambda(\mm)}(V_\bullet)]
\end{equation*}
whenever $\min\{ \codim (\sing(V_{r}),X) \}  > \sum_i m_i \codim (V_i,X)$.  (By convention, $\dim\emptyset = -\infty$.)
\end{theorem}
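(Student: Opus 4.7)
The plan is to reduce to the case where every $V_i$ is smooth, and then apply the excess intersection formula twice---once to the higher diagonal $\Delta_{\lambda_1}\colon X \to X^{\lambda_1}$ (to express the left-hand side) and once to the zero section $s_m\colon X \to J_m X$ (to express the right-hand side)---matching the two resulting excess bundles via Lemma~\ref{l:normal} and Remark~\ref{r:normal}. Throughout, write $c_i = \codim(V_i, X)$ and $k = \sum_i m_i c_i$.

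For the reduction, set $U = X \setminus \bigcup_r \sing(V_r)$. By hypothesis $\codim(X \setminus U, X) > k$, so Lemma~\ref{l:openBM} gives that the restriction $H_G^{2k} X \to H_G^{2k} U$ is an isomorphism. Each $[V_i]$ restricts to the fundamental class of the smooth, dense open subset $V_i \cap U \subseteq V_i$; and since $\codim(J_m \sing(V_r), J_m X) \geq \codim(\sing(V_r), X) > k$ for every $m \geq 0$, the components of $\Cont^{\geq \lambda(\mm)}(V_\bullet)$ lying over $\bigcup_r \sing(V_r)$ contribute only to degrees exceeding $2k$, so $[\Cont^{\geq \lambda(\mm)}(V_\bullet)]$ restricts to $[\Cont^{\geq \lambda(\mm)}(V_\bullet \cap U)]$. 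We may thus assume each $V_i$ is smooth.

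Now fix $m = \lambda_1 - 1$. Choosing local coordinates on $X$ adapted to the chain $V_s \subseteq \cdots \subseteq V_1$, Example~\ref{e:equations} exhibits $\Cont^{\geq \lambda(\mm)}(V_\bullet)_m \subseteq J_m X$ as a coordinate linear subspace cut out by exactly $k$ jet equations, hence as a smooth complete intersection of codimension $k$. Moreover $s_m^{-1}(\Cont^{\geq \lambda(\mm)}(V_\bullet)_m) = \bigcap_i V_i = V_s$ scheme-theoretically. Applying the excess intersection formula to $s_m$ yields
\[
  \bigl[\Cont^{\geq \lambda(\mm)}(V_\bullet)\bigr] \;=\; s_m^*\bigl[\Cont^{\geq \lambda(\mm)}(V_\bullet)_m\bigr] \;=\; i_*\bigl(c_{k - c_s}(E_2)\bigr),
\]
where $i\colon V_s \hookrightarrow X$ and $E_2$ is the excess bundle determined by
$0 \to N_{V_s/X} \to s_m^* N_{\Cont^{\geq \lambda(\mm)}(V_\bullet)_m / J_m X}|_{V_s} \to E_2 \to 0$.
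Applying the same formula to $\Delta_{\lambda_1}$ and $V_1^{m_1} \times \cdots \times V_s^{m_s} \subseteq X^{\lambda_1}$ (whose $\Delta_{\lambda_1}$-preimage is also $V_s$) gives
\[
  [V_1]^{m_1} \cdots [V_s]^{m_s} \;=\; i_*\bigl(c_{k - c_s}(E_1)\bigr),
\]
with $E_1$ determined by $0 \to N_{V_s/X} \to \bigoplus_i (N_{V_i/X}|_{V_s})^{\oplus m_i} \to E_1 \to 0$.

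The main obstacle, and technical heart of the argument, is showing $[E_1] = [E_2]$ in $K^0(V_s)$. Using Lemma~\ref{l:normal} to write $[N_{X/J_m X}|_{V_s}] = m\,[TX|_{V_s}]$, Remark~\ref{r:normal} to identify $[N_{V_s/\Cont^{\geq \lambda(\mm)}(V_\bullet)_m}]$ with $[TP|_{V_s}] - [TV_s]$ for $P = V_s^{m_s} \times \cdots \times V_1^{m_1}$, and the normal bundle exact sequences for the nested chains $V_s \subset X \subset J_m X$ and $V_s \subset \Cont^{\geq \lambda(\mm)}(V_\bullet)_m \subset J_m X$, a direct telescoping computation in $K^0(V_s)$ gives
\[
  \bigl[s_m^* N_{\Cont^{\geq \lambda(\mm)}(V_\bullet)_m/J_m X}|_{V_s}\bigr] \;=\; \sum_{i=1}^s m_i\,[N_{V_i/X}|_{V_s}],
\]
so $[E_1] = [E_2]$; since Chern classes depend only on the K-theory class, $c_{k - c_s}(E_1) = c_{k - c_s}(E_2)$, and the theorem follows.
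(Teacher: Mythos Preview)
Your proof is correct and takes a genuinely different route from the paper's.  Both arguments begin with the same reduction to smooth $V_i$ via Lemma~\ref{l:openBM}, and both ultimately rest on Lemma~\ref{l:normal} and Remark~\ref{r:normal}.  The difference lies in how the two sides of the equation are compared after the reduction.  The paper passes to a maximal compact subgroup $K\subseteq G$, invokes the slice theorem to obtain a $K$-invariant tubular neighborhood $U_X$ of the zero section $X\subseteq J_mX$, and uses Lemma~\ref{l:normal} to identify $U_X$ simultaneously with a tubular neighborhood of the diagonal in $X^{m+1}$; a commutative diagram in Borel--Moore homology then matches the two restrictions directly.  You instead apply the excess intersection formula to the two regular embeddings $s_m\colon X\hookrightarrow J_mX$ and $\Delta_{\lambda_1}\colon X\hookrightarrow X^{\lambda_1}$, and match the resulting excess bundles $E_1$ and $E_2$ by a $K$-theory computation.

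What each approach buys: the paper's argument is more geometric and makes the identification of classes transparent at the level of tubular neighborhoods, but it depends essentially on the reductivity of $G$ (to pass to $K$ and invoke the slice theorem) and on analytic neighborhoods.  Your argument is purely algebraic: the excess intersection formula and the $K$-theory manipulation work in equivariant Chow groups and do not require $G$ reductive.  In particular, your approach addresses one of the desiderata the paper raises in \S\ref{rmks}, where it notes that the use of analytic neighborhoods is a technical obstacle to developing the theory over an arbitrary field.  One minor remark: in your $K$-theory step you could have avoided the telescoping entirely, since Remark~\ref{r:normal} and Lemma~\ref{l:normal} together give compatible bundle isomorphisms $N_{X/J_mX}\cong N_{\Delta/X^{\lambda_1}}$ and $N_{V_s/\Cont_m}\cong N_{V_s/P}$, from which the Fulton excess bundles (as opposed to your dual formulation) are seen to be isomorphic on the nose.
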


\begin{proof}
Clearly we may assume that $m_s > 0$.  Let $c_i$ denote the codimension of $V_i$.  Let $Z = \bigcup_r \sing(V_{r})$ and let $U=X\setminus Z$, so we have an exact sequence 
\[
\cdots \rightarrow \BH^G_{2(d - \sum_i m_i c_i)} Z \rightarrow  H^{\sum_i 2 m_i c_i }_G X \rightarrow  H^{\sum_i 2m_i c_i}_G U \rightarrow \BH^G_{2(d - \sum_i m_i c_i)-1} Z
\rightarrow \cdots. 
\]
By the assumption on $\dim Z$, the left and right terms are zero, so the restriction map $H^{\sum_i 2 m_i c_i }_G X \rightarrow  H^{\sum_i 2 m_i c_i }_G U $ is an isomorphism.  Replacing $X$ with $U$ and $V_r$ with $V_r \cap U$, we reduce to the case when each $V_r$ is smooth.

Fix $m \ge \lambda_1 - 1$, so that $\Cont^{\ge \lambda(\mm)}(V_\bullet)_m = \bigcap\pi_{m,\lambda_i-1}^{-1}(J_{\lambda_i-1}V_i)$ in $J_m X$.  
Let $K\subseteq G$ be a maximal compact subgroup; since a reductive group retracts onto its maximal compact subgroups, $G$- and $K$-equivariant cohomology are naturally isomorphic, and we identify the two for the rest of this argument.  The slice theorem (see \cite[I.2.1]{Audin}) gives a $K$-invariant neighborhood $U_X \subseteq J_m X$ of $X$ which is $K$-equivariantly isomorphic to a neighborhood of the zero section in $N_{X/J_m X}$.  Note that restriction to the zero section $H^*_G J_m X \rightarrow H^*_G U_X \rightarrow H^*_G X$ is an isomorphism by Lemma~\ref{l:cohomology}.  Since $U_X$ retracts onto $X$, the map $H^*_G U_X \rightarrow H^*_G X$ is also an isomorphism, and hence the restriction $H^*_G J_m X \rightarrow H^*_G U_X$ is an isomorphism.
  
By the canonical isomorphism $N_{X/J_m X} \cong N_{\Delta_{m + 1}(X)/X \times \cdots \times X}$ 
of Lemma~\ref{l:normal}, $U_X$ is ($K$-equivariantly) isomorphic to an open neighborhood of the diagonal $\Delta_{m + 1}(X)$ in $X \times \cdots \times X$.  Moreover, Remark~\ref{r:normal} 
implies that the class of $\Cont^{\ge \lambda(\mm)}(V_\bullet)_m$ in $J_m X$ restricts to the 
class of the intersection $U_{V_\bullet, m_i}$ of $U_X$ with
\[
  V_{\bullet, m_i} :=\underbrace{V_s \times \cdots \times V_s}_{m_s \textrm{ times } } \times \cdots \times\underbrace{V_1 \times \cdots \times V_1}_{m_1 \textrm{ times } }  \times \underbrace{X \times \cdots \times  X}_{m + 1 - \lambda_1 \textrm{ times } }.
\] 

Consider the commutative diagram:
\begin{diagram}
\BH^G_{2(m + 1)d-\sum_i 2m_i c_i }( V_{\bullet, m_i}) & \rTo^\sim & \BH^G_{2(m + 1)(d-c)}(U_{V_\bullet, m_i}) \\
\dTo                 &                   &   \dTo \\
H_G^{\sum_i 2 m_i c_i }(X \times \cdots \times X) &      \rTo          &   H_G^{\sum_i 2 m_i c_i }(U_X)  &\; = \;& H_G^{\sum_i 2 m_i c_i }X.
\end{diagram}
 Here the horizontal arrows are restriction to the tubular neighborhoods; the composition in the bottom row is $\Delta^*$.  Going counter-clockwise around the diagram, we have $\Delta^*[ V_{\bullet, m_i}] = [V_1]^{m_1}\cdots [V_s]^{m_s}$.  Going clockwise, we have $ [\Cont^{\ge \lambda(\mm)}(V_\bullet)]$, completing the proof.
\end{proof}

\begin{remark}
The condition on singular loci in Theorem~\ref{t:smooth} is quite restrictive.  In particular, it implies that $\codim(\bigcup_r \sing(V_{r}), X) > \sum_i m_i \codim V_i$, and hence that  $\codim \Cont^{\ge \lambda(\mm)}(V_\bullet) = \codim \Cont^{\ge \lambda(\mm)}\sm(V_\bullet) = \sum_i m_i \codim V_i$.  This condition is slightly stronger than the codimension condition in Theorem~\ref{t:multiplication}.  
\end{remark}

In the case when $V_s = V_1 = V$, Theorem~\ref{t:smooth} reduces to the following corollary. 

\begin{corollary}\label{c:mult}
Let $X$ be a smooth $G$-variety of dimension $d$, and let $V\subseteq X$ be a $G$-invariant connected subvariety of codimension $c$.  We have
\begin{equation}\label{eq:magic2}
  [V]^{m+1} = [\overline{J_m \sm(V)}]
\end{equation}
whenever $\codim (\sing(V),X) > (m + 1)c$.  (By convention, $\codim\emptyset = \infty$.)
\end{corollary}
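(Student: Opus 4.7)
The plan is to derive this as the specialization of Theorem~\ref{t:smooth} to a chain of length one. I would take $s = 1$, $V_1 = V$, and $m_1 = m+1$. Then $\lambda(\mm) = (m+1)$ and $\sum_i m_i \codim V_i = (m+1)c$, so the hypothesis $\min\{\codim(\sing(V_r), X)\} > \sum_i m_i \codim(V_i, X)$ of Theorem~\ref{t:smooth} becomes exactly the bound $\codim(\sing V, X) > (m+1)c$ assumed here. Theorem~\ref{t:smooth} then immediately yields
\[
  [V]^{m+1} = [\Cont^{\ge m+1}(V)] \in H_G^{2(m+1)c} X.
\]

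The remaining step is to identify this cylinder class with $[\overline{J_m \sm(V)}]$. Since $\Cont^{\ge m+1}(V) = \psi_m^{-1}(J_m V)$, by Lemma~\ref{l:cohomology} the class in question agrees with $[J_m V] \in H_G^* J_m X = H_G^* X$. The jet scheme $J_m V$ has the distinguished irreducible component $\overline{J_m \sm(V)}$ of codimension exactly $(m+1)c$ in $J_m X$, while every other irreducible component is contained in $\pi_m^{-1}(\sing V)$. Because $\pi_m \colon J_m X \to X$ is smooth of relative dimension $md$ (Lemma~\ref{smootharc}), any such extraneous component has codimension in $J_m X$ at least $\codim(\sing V, X)$, which by hypothesis strictly exceeds $(m+1)c$. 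Hence in degree $2(m+1)c$ the class $[J_m V]$ collapses to the single contribution $[\overline{J_m \sm(V)}]$, yielding the desired equality. This is consistent with the remark following Theorem~\ref{t:smooth} recording that $\codim \Cont^{\ge \lambda(\mm)}(V_\bullet) = \codim \Cont^{\ge \lambda(\mm)} \sm(V_\bullet)$ in this regime.

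There is essentially no obstacle: once Theorem~\ref{t:smooth} is available, the argument reduces to codimension bookkeeping. The only point requiring attention is verifying that the ``extra'' components of $J_m V$ supported over $\sing V$ contribute only in degrees strictly larger than $2(m+1)c$, so that they do not contaminate the identification; this is immediate from the codimension hypothesis together with flatness of the truncation $\pi_m$.
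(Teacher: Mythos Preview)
Your proposal is correct and follows essentially the same route as the paper: specialize Theorem~\ref{t:smooth} to $s=1$, $V_1=V$, $m_1=m+1$, and then use the codimension hypothesis (via the smooth projection $\pi_m$) to identify $[\Cont^{\ge m+1}(V)]$ with $[\overline{J_m\sm(V)}]$, exactly the content of the remark following Theorem~\ref{t:smooth}. One small quibble: in your final sentence you invoke ``flatness'' of $\pi_m$, but the property you actually use (and correctly stated earlier) is that $\pi_m$ is \emph{smooth} of relative dimension $md$, which is what guarantees preservation of codimension.
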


\begin{example}\label{ex:cusp1}
Some condition on the singular locus is necessary.  For example, let $V \subseteq \A^2$ be the cuspidal cubic defined by $x^3-y^2 = 0$; this is invariant for the action of $T=\C^*$ by $z\cdot(x,y) = (z^2 x, z^3 y)$.  Since $V$ has degree $6$ with respect to the grading corresponding to the $\C^*$-action, we have $[V] = 6t$ in $H_T^*\A^2 \isom \Z[t]$.  The tangent bundle $TV = J_1 V$ is defined by the two equations $x^3 - y^2 = 0$ and $3x^2 x_1 - 2y y_1=0$, each of which has degree $6$, so $[TV] = 36t^2 = [V]^2$.  On the other hand, $TV$ has two irreducible components, and one can check that $[\overline{T\sm(V)}]=18t^2$.
%
%In fact, $J_m V$ is pure-dimensional for $m<5$, so Corollary~\ref{c:affine-CI} tells us $[J_m V] = [V]^{m+1} = (6t)^{m+1}$ in this range.  However, $J_5 V$ is not pure-dimensional: it has two irreducible components, one of codimension $5$ in $J_5\A^2$, and one of codimension $6$.  The latter is $\overline{J_5\sm(V)}$, and using Macaulay 2, one finds $[\overline{J_5\sm(V)}] = 5184t^6 = (6t)^4 (2t)^2$.  Are there embedded components making up the difference?
\end{example}

%%%%%%%%%%%%%%%%%%%%%%%%%%%%%%%%%%%%%%%%%%%%%%%%%
%%%%%%%%%%%%%%%%%%%%%%%%%%%%%%%%%%%%%%%%%%%%%%%%%
%%%%%%%%%%%%%%%%%%%%%%%%%%%%%%%%%%%%%%%%%%%%%%%%%
%%%%%%%%%%%%%%%%%%%%%%%%%%%%%%%%%%%%%%%%%%%%%%%%%
\section{Higher-order multiplicities}\label{s:multiplicities}
%%%%%%%%%%%%%%%%%%%%%%%%%%%%%%%%%%%%%%%%%%%%%%%%%
%%%%%%%%%%%%%%%%%%%%%%%%%%%%%%%%%%%%%%%%%%%%%%%%%
%%%%%%%%%%%%%%%%%%%%%%%%%%%%%%%%%%%%%%%%%%%%%%%%%
%%%%%%%%%%%%%%%%%%%%%%%%%%%%%%%%%%%%%%%%%%%%%%%%%

Let $V \subseteq X$ be a $G$-invariant subvariety of codimension $c$.  As discussed in the last two sections, the discrepancy between $[V]^{m+1}$ and $[J_mV]$ bears a relation to the singularity type of $V$.  In this section, we introduce a pair of algebraic invariants measuring this discrepancy, and describe some of their properties.

Throughout this section, we will assume that the top Chern class $c^G_d(X)$ and the fundamental class $[V]$ are nonzerodivisors in $H_G^*X$.  This hypothesis holds in the important case where $G$ is a torus acting linearly on $X=\A^d$, fixing only the origin.  Having made this assumption, let $H = (H_G^*X)[c^G_d(X)^{-1}, [V]^{-1}]$ be the ring obtained by inverting these elements.

We will also abuse notation slightly by using $x$ denote both a point in $X$ and its image in $J_mX$ under the zero section $s_m\colon X \to J_mX$.

For an arbitrary variety $V$ of codimension $k$, let $(J_m V)_{\exp} \subseteq J_mV$ denote the union of all components of $J_m V$ which have ``expected dimension'' $k(m+1)$, with their induced subscheme structure.  (If $J_m V$ has embedded components of expected dimension, they should be included.)  We also write $\mJ_mV$ for the ``main component'' $\overline{J_m\sm(V)}\subseteq J_mV$, so $\mJ_mV$ is automatically pure-dimensional (of expected dimension).  %When $V \subseteq X$ is a $G$-invariant subvariety, we write $[J_mV]_{\exp} = [(J_mV)_{\exp}]$ for the corresponding class in $H_G^{2c(m+1)}X$, where $c=d-k$ is the codimension of $V$.
When $V \subseteq X$ is a $G$-invariant subvariety, write $c=d-k$ for the codimension, so $[(J_mV)_{\exp}]$ is a class in $H_G^{2c(m+1)}X$.

\begin{definition}
Let $V\subseteq X$ be as above.  Define the \define{global $m^\mathrm{th}$-order  equivariant multiplicities} by
\begin{align*}
  e^{G}_m(V) &= \frac{[(J_m V)_{\exp}]}{c^G_d(X)\cdot [V]^{m} } \\
 \intertext{and}
  \te^{G}_m(V) &= \frac{[\mJ_mV]}{c^G_d(X)\cdot [V]^{m} }
\end{align*}
as elements of $H$.

For a fixed point $x\in V^G$, we also define \define{(local) $m^\mathrm{th}$-order equivariant multiplicities}, as follows.  Assuming the restrictions $c_d^G(T_x X)$ and $[V]|_x$ are nonzerodivisors in $H_G^*(\pt)$, let $H_x$ be the result of inverting these elements, and set
\begin{align*}
  e_{x,m}^{G}(V) &= \frac{[(J_m V)_{\exp}]|_x}{c^G_d(T_xX)\cdot ([V]|_x)^{m} } \\
 \intertext{and}
  \te_{x,m}^{G}(V) &= \frac{[\mJ_mV]|_x}{c^G_d(T_xX)\cdot ([V]|_x)^{m} }
\end{align*}
in $H_x$.
\end{definition}

Note that $e_{x,m}^{G}(V) = \iota_x^*e^{G}_m(V)$ and $\te_{x,m}^{G}(V) = \iota_x^*\te^{G}_m(V)$, where $\iota_x \colon  \{x\}\hookrightarrow X$ is the inclusion.  Like the equivariant multiplicities described by Brion \cite[\S4]{BriChow}, these are homogeneous elements of degree $-\dim(V)$.  In fact, there is a close connection:

\begin{proposition}
For $m=0$ and $G=T$ a torus, the local multiplicities $e_{x,0}^{T}(V) = \te_{x,0}^{T}(V)$ coincide with Brion's equivariant multiplicity $e_x(V)$, as defined in \cite[\S4]{BriChow}.
\end{proposition}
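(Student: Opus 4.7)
The plan is to unpack both definitions and verify they match directly. For $m = 0$, we have $J_0 V = V$ as schemes, and since $V$ is assumed pure-dimensional of codimension $c$, its irreducible components all have the expected dimension $d - c$. Hence $(J_0 V)_{\exp} = V$. Similarly, $V$ is by definition the closure of its smooth locus, so $\mJ_0 V = \overline{J_0 \sm(V)} = \overline{\sm(V)} = V$. Plugging these into the definitions and using $([V]|_x)^0 = 1$ gives
\[
 e^{T}_{x,0}(V) = \te^{T}_{x,0}(V) = \frac{[V]|_x}{c^T_d(T_xX)}.
\]

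The next step is to recall Brion's definition: for $T$ acting on the smooth variety $X$ with isolated fixed point $x$, the equivariant multiplicity is defined in \cite[\S4]{BriChow} as
\[
 e_x(V) = \frac{[V]|_x}{\chi_1 \cdots \chi_d},
\]
where $\chi_1, \ldots, \chi_d$ are the weights of the $T$-representation on $T_xX$.

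Finally, I would observe that since $X$ is smooth at $x$, the tangent space $T_xX$ splits as a sum of $T$-characters $\chi_1 \oplus \cdots \oplus \chi_d$, and the equivariant top Chern class of a representation is the product of its weights (via the splitting principle, applied to the equivariant vector bundle $T_xX \to \pt$):
\[
 c^T_d(T_xX) = \chi_1 \cdots \chi_d \in H_T^{2d}(\pt).
\]
Comparing the two expressions yields $e^{T}_{x,0}(V) = \te^{T}_{x,0}(V) = e_x(V)$.

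There is essentially no obstacle here; the entire content is the identification $c^T_d(T_xX) = \prod \chi_i$, which is standard. The only point requiring mild care is checking that the nonzerodivisor hypotheses in the definition of the local multiplicities reduce exactly to the conditions under which Brion's multiplicity is defined (namely, that $x$ is a nondegenerate isolated fixed point so that $\prod \chi_i \neq 0$), but this is immediate from the identification of $c^T_d(T_xX)$ with the product of weights.
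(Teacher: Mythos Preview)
Your argument is correct and arrives at the same formula as the paper, $e_{x,0}^T(V) = \frac{[V]|_x}{\chi_1\cdots\chi_d}$, via the same elementary observations about $J_0V$ and $c_d^T(T_xX)$.

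The one point of difference is in how you connect this to Brion's invariant. You assert that $e_x(V) = \frac{[V]|_x}{\chi_1\cdots\chi_d}$ is Brion's \emph{definition}; in fact Brion defines $e_x(V)$ intrinsically (via an axiomatic characterization on the equivariant Chow group), and the formula you wrote is a consequence, valid when the ambient $X$ is smooth. The paper handles this by passing through the deformation to the normal cone, so that $[V]|_x$ becomes $[C_xV]|_0$ in $T_xX$, and then invokes \cite[Theorem~4.5]{BriChow}, which computes $e_x(V)$ in terms of the tangent cone. Your shortcut is fine mathematically---the specialization $[V]|_x = [C_xV]|_0$ is standard---but you should cite the relevant result in Brion rather than calling it the definition.
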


\begin{proof}
Using a deformation to the normal cone, one reduces to the case where $V=C_xV$ and $X=T_xV$.  Here the requirement that $c_d(T_xX)=c_d(T_xV)$ be a nonzerodivisor is equivalent to $x$ being nondegenerate, in the terminology of \cite{BriChow}.  The local $0$-order multiplicity is
\[
  e_{x,0}^{T}(V) = \frac{[V]|_x}{c^T_d(T_xV)} =  \frac{[C_xV]|_0}{\chi_1\cdots\chi_d},
\]
where $\chi_1,\ldots,\chi_d$ are the weights of $T$ acting on $T_xV$.  This is equal to $e_x(V)$ by \cite[Theorem~4.5]{BriChow}.
\end{proof}

The results of the previous two sections have consequences for higher-order equivariant multiplicities.  For simplicity, we assume $G=T$ is a torus in what follows; an appropriate adjustment of hypotheses yields similar statements for other groups.

\begin{corollary}\label{c:simple-mult}
Suppose either 
\begin{enumerate}
\renewcommand{\theenumi}{\alph{enumi}}

\item $\codim(\sing(V),X)>(m+1)c$; or

\smallskip

\item $V \subseteq X$ is e.l.c.i., and $J_mV$ is pure-dimensional.
\end{enumerate}
\renewcommand{\theenumi}{\arabic{enumi}}
Then
\begin{align*}
  e^{T}_m(V) &= e^{T}_0(V) = \frac{[V]}{c^T_d(X) } \\
 \intertext{and, for a fixed point $x\in V^T$,}
  e_{x,m}^{T}(V) &= e_{x,0}^{T}(V) = e_x(V) .
\end{align*}

In particular, if $V$ is smooth, or e.l.c.i. and normal with log-canonical singularities, these equations hold for all $m$, by Corollaries~\ref{c:mult} and \ref{c:affine-CI}. 
\end{corollary}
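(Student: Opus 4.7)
The approach is to establish the single global identity $[(J_mV)_{\exp}] = [V]^{m+1}$ in $H_G^*X$ under either hypothesis; from this the equality $e^T_m(V) = [V]/c^T_d(X) = e^T_0(V)$ is immediate from the definition of $e^T_m(V)$, the local statement $e^T_{x,m}(V) = e^T_{x,0}(V)$ follows by pulling back along $\iota_x\colon\{x\}\hookrightarrow X$ and dividing (legitimate since $c^T_d(T_xX)$ and $[V]|_x$ are nonzerodivisors in $H_x$ by assumption), and the coincidence with Brion's equivariant multiplicity $e_x(V)$ is exactly the proposition established just before the corollary.

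In case (a), Corollary~\ref{c:mult} gives $[V]^{m+1} = [\mJ_mV]$, so I need only show that $(J_mV)_{\exp} = \mJ_mV$ as cycles. Any irreducible component of $J_mV$ distinct from $\mJ_mV = \overline{J_m\sm(V)}$ lies over $\sing(V)$ under the truncation $\pi_m\colon J_mX \to X$, hence is contained in $\pi_m^{-1}(\sing V)$. Since $\pi_m$ is a Zariski-locally trivial $\A^{md}$-bundle, this locus has dimension $md + \dim\sing V < md + d - (m+1)c = (m+1)k$ (with $k = d-c$), strictly below the expected dimension of $J_mV$; so no such component, nor any embedded component of expected dimension lurking over $\sing V$, contributes to $(J_mV)_{\exp}$.

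In case (b), $V$ being e.l.c.i.\ of codimension $c$ forces every component of $J_mV$ to have dimension at least $(m+1)k$ by the standard local equation count, and pure-dimensionality upgrades this to equality, giving $(J_mV)_{\exp} = J_mV$. The e.l.c.i.\ version of Corollary~\ref{c:affine-CI}, noted in Remark~\ref{r:q-coeff} (after localizing at $H$ if necessary, which the nonzerodivisor hypotheses permit), then yields $[J_mV] = [V]^{m+1}$. The ``in particular'' clause is a direct consequence of what has been proved: when $V$ is smooth, case (a) applies for every $m$ since $\sing V = \emptyset$; when $V$ is normal, e.l.c.i., and log canonical, case (b) applies for every $m$ by Theorem~\ref{maddog}. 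The principal obstacle is the dimension estimate in case (a), which rules out extraneous components of expected dimension supported over $\sing V$; everything else is a direct invocation of the main multiplication results of Sections~\ref{s:mult1} and~\ref{s:mult2}.
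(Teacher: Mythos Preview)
Your argument is correct and matches the paper's intended approach: the corollary is stated without a separate proof, the ``In particular'' clause simply pointing to Corollaries~\ref{c:mult} and~\ref{c:affine-CI}, and you have supplied exactly the details those citations require---the dimension estimate forcing $(J_mV)_{\exp}=\mJ_mV$ in case~(a), and pure-dimensionality giving $(J_mV)_{\exp}=J_mV$ in case~(b). One small remark: your parenthetical about ``localizing at $H$'' does not actually supply hypothesis~\eqref{hypothesis} needed for the e.l.c.i.\ extension of Corollary~\ref{c:affine-CI}; the paper is equally silent on this point, tacitly folding it into the standing torus assumptions of \S\ref{s:multiplicities}, so you are in good company, but the phrase could be dropped without loss.
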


%In the cases covered by Corollary~\ref{c:simple-mult}, the local multiplicity $e^{T}_{x,m}(V)$ is independent of the embedding of $V$ in $X$ (because the equivariant multiplicity $e_x(V)$ is, by \cite[Theorem~4.2(ii)]{BriChow}).  The global multiplicity is also invariant, in the sense that if $j\colon  X \hookrightarrow X'$ is an equivariant embedding of smooth varieties, then $e^{T}_m(V)_X = j^*e^{T}_m(V)_{X'}$, where the subscripts indicate which embedding is used to define the multiplicity.  

A salient feature of this corollary is that the higher-order multiplicities are seen to be independent of the embedding of $V$ in $X$.  In fact, this is true more generally.

\begin{theorem}\label{t:mult-invt}
Let $V \subseteq X$ be a $T$-invariant subvariety, and let $x\in V$ be a fixed point.
\begin{enumerate}
\item The local multiplicities $e^{T}_{x,m}(V)$ and $\te^{T}_{x,m}(V)$ are independent of the embedding in $X$. \label{mult-inv1}

\smallskip

\item If $j\colon  X \hookrightarrow X'$ is an equivariant embedding of smooth varieties, then $e^{T}_m(V)_X = j^*e^{T}_m(V)_{X'}$ and $\te^{T}_m(V)_X = j^*\te^{T}_m(V)_{X'}$, where the subscript indicates which embedding of $V$ is used to define the multiplicity. \label{mult-inv2}
\end{enumerate} 
\end{theorem}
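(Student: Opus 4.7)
The plan is to first establish part (2) from three Chern-class identities, and then deduce part (1) by realizing both embeddings inside a common product $X_1 \times X_2$ and using a $\C^*$-deformation to compare them.

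For part (2), write $c' = d' - d$ for the codimension of $j \colon X \hookrightarrow X'$. The key inputs are the self-intersection formula $j^* j_* \alpha = \alpha \cdot c_{c'}^G(N_{X/X'})$, the tangent exact sequence $0 \to TX \to j^* TX' \to N_{X/X'} \to 0$, and a jet-scheme analog of Lemma~\ref{l:normal} giving a canonical identification $N_{J_m X / J_m X'}|_X \cong N_{X/X'}^{\oplus(m+1)}$ (immediate on tangent spaces along the zero section $X \hookrightarrow J_m X$, using $T_x J_m X \cong (T_x X)^{\oplus(m+1)}$, and extended globally using the bundle structure of Lemma~\ref{smootharc}). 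These yield
\[
j^*[V]_{X'} = [V]_X \cdot c_{c'}^G(N_{X/X'}), \quad j^* c_{d'}^G(X') = c_d^G(X) \cdot c_{c'}^G(N_{X/X'}),
\]
and
\[
j^*[(J_m V)_{\exp}]_{X'} = [(J_m V)_{\exp}]_X \cdot c_{c'}^G(N_{X/X'})^{m+1},
\]
with the same identity holding for $\mJ_m V$ in place of $(J_m V)_{\exp}$ (both are the same closed subscheme of $J_m X \subseteq J_m X'$). Substituting these into the definition of $e_m^G(V)_{X'}$ and applying $j^*$, the single factor $c_{c'}^G(N_{X/X'})^{m+1}$ in the numerator cancels against the $(m+1)$ such factors contributed by the denominator, giving $j^* e_m^G(V)_{X'} = e_m^G(V)_X$; the same calculation gives $j^* \te_m^G(V)_{X'} = \te_m^G(V)_X$.

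For part (1), given two embeddings $j_i \colon V \hookrightarrow X_i$ with $x \mapsto x_i$, consider the product $X_1 \times X_2$ with the diagonal $T$-action. Since $x_1, x_2$ are $T$-fixed, the maps $\iota_1 \colon X_1 \hookrightarrow X_1 \times X_2$, $v \mapsto (v, x_2)$, and the diagonal embedding $\Phi = (j_1, j_2) \colon V \hookrightarrow X_1 \times X_2$ are $T$-equivariant, with $\iota_1 \circ j_1$ and $\Phi$ both sending $x$ to $(x_1, x_2)$. By Luna's analytic slice theorem at $x_2$, we may replace $X_2$ with the tangent representation $T_{x_2} X_2$ in an analytic neighborhood of $x_2$ (so that $x_2$ becomes $0$); since local multiplicities are determined by an analytic neighborhood of the fixed point, this reduction is harmless. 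The scalar $\C^*$-action on $T_{x_2} X_2$ commutes with $T$, so $\Phi_t = (j_1, t \cdot j_2)$ defines a $T$-equivariant closed immersion $V \times \A^1 \hookrightarrow X_1 \times X_2 \times \A^1$, flat over $\A^1$, with fiber $\Phi(V)$ at $t = 1$ and $\iota_1(j_1(V))$ at $t = 0$. Applying $J_m$ gives an induced flat family of jet schemes, and since $(J_m V)_{\exp}$ and $\mJ_m V$ are intrinsic to $V$, specialization of cycle classes in both families identifies $[\Phi(V)] = [\iota_1(j_1(V))]$ in $H^*_T(X_1 \times X_2)$ together with the corresponding equality of jet-scheme classes. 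Restricting to the fixed point $(x_1, x_2)$, the local multiplicities of $\Phi(V)$ and $\iota_1(j_1(V))$ at $(x_1, x_2)$ coincide; by part (2) applied to $\iota_1 \colon X_1 \hookrightarrow X_1 \times X_2$, the latter equals $e_{x_1,m}^T(V)$. The symmetric argument using $\iota_2$ gives equality with $e_{x_2,m}^T(V)$, completing the proof.

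The main obstacle is making the normal-bundle identification $N_{J_m X / J_m X'}|_X \cong N_{X/X'}^{\oplus(m+1)}$ precise as a canonical isomorphism of $G$-equivariant vector bundles (rather than merely pointwise), so that the Chern-class computation in part (2) is rigorous; this should follow from the exact-sequence manipulation $N_{X/J_m X'} \cong N_{X/X'} \oplus (TX'|_X)^{\oplus m}$ combined with $N_{X/J_m X} \cong (TX)^{\oplus m}$. A secondary subtlety is verifying that the flat-family specialization in part (1) respects the ``expected-dimension'' decomposition, so that $[(J_m V)_{\exp}]$ and $[\mJ_m V]$ deform correctly under the $\C^*$-scaling.
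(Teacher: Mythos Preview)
Your proof of part~(2) matches the paper's: both use the self-intersection formula together with the identification of the top Chern class of $N_{J_mX/J_mX'}$ (the paper cites Lemma~\ref{l:reltan} for this, which suffices since only $c^T_{top}$ is needed, not the full direct-sum decomposition you aim for).

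For part~(1), your approach is genuinely different.  The paper proceeds by constructing an intrinsic class: it uses the deformation to the normal cone $M^\circ_xV \to \P^1$, applies the relative jet-scheme functor, and takes the closure of $(J_mV)_{\exp}\times\A^1$ to define a canonical cycle $Z_{x,m}(V) \subseteq J_m(C_xV)$ depending only on $(V,x,m)$.  A lemma comparing two flat subfamilies inside $J_m(M^\circ_xX/\P^1) \cong M^\circ_x(J_mX)$ then shows that $[Z_{x,m}(V)]$ computes the numerator $[(J_mV)_{\exp}]|_x$, and substituting yields an \emph{explicit intrinsic formula}
\[
  e^T_{x,m}(V) = \frac{\zeta}{c^T_{k'}(T_xV)\cdot\nu^m},
\]
where $\zeta=[Z_{x,m}(V)]$ and $\nu=[C_xV]$ live in $H^*_T(J_m(T_xV))$ and $H^*_T(T_xV)$ respectively, and $k'=\dim T_xV$.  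Your product-and-scaling argument is more elementary and avoids this construction entirely, directly comparing any two embeddings via a common diagonal in $X_1\times X_2$; what it gives up is the formula --- you obtain independence but no intrinsic expression for the invariant.  One technical point worth tightening: you invoke part~(2) to pass from $X_1$ to $X_1\times X_2$ via $\iota_1$, but part~(2) is stated for \emph{global} multiplicities under global nonzerodivisor hypotheses, while part~(1) assumes only the local ones at $x$; you should note that the same three identities, restricted to the fixed point, give the needed local analogue of~(2) directly.  The analytic-slice and flat-specialization subtleties you flag are real but manageable, and are of the same character as analytic arguments the paper uses elsewhere (e.g.\ in the proofs of Proposition~\ref{p:boundary} and Theorem~\ref{t:smooth}).
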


In proving \eqref{mult-inv1}, we will need to construct canonical classes in $\BH^T_{2k(m+1)}(J_m(C_xV))$.  (The same construction produces cycles for equivariant Chow groups.)  The idea is to follow the classes $[(J_mV)_{\exp}]$ and $[\mJ_mV]$ along the deformation of $V$ to the normal cone $C_xV$.  We will give the arguments in detail for the classes $e^{T}_{x,m}(V)$; they are similar for $\te^{T}_{x,m}(V)$.

Let $M=M^\circ_xV \subseteq \Bl_{x \times \infty}(V \times \P^1)$ be the total space of the deformation to the normal cone \cite[\S5]{FulIntersection}, so the projection $M \to \P^1$ is flat, with fibers identified as $M_s = V$ for $s\neq\infty$, and $M_\infty = C_xV$.  Let $p\colon J_m(M/\P^1) \to \P^1$ be the relative jet scheme (cf.~\cite{MusSingularities}), so the fiber over $s\in \P^1$ is naturally identified with $J_m(M_s)$.  Removing the fiber $p^{-1}(\infty)$, we have a subscheme
\[
  (J_mV_{\exp}) \times \A^1 \subset J_m(M/\P^1).
\]
Let $\mathcal{Z}$ be the closure.  By construction, $\mathcal{Z} \to \P^1$ is flat, of relative dimension $k(m+1)$, and the fiber over $\infty$ is a closed subscheme $Z_{x,m}(V) \subseteq J_m(M_\infty) = J_m(C_xV)$.  The class $[Z_{x,m}(V)] \in \BH^T_{2k(m+1)}J_m(C_xV)$ clearly depends only on $V$, $x$, and $m$.  When $x$ is a smooth point of $V$, note that $Z_{x,m}(V) = C_xV= T_xV$.

\begin{lemma}\label{l:zv-equal}
The image of $[Z_{x,m}(V)]$ under the map
\[
  \BH^T_{2k(m+1)}J_m(C_xV) \to \BH^T_{2k(m+1)}J_m(T_xX) = H_T^{2c(m+1)}(\pt)
\]
is the same as the image of $[(J_mV)_{\exp}]$ under the composition
\[
  \BH^T_{2k(m+1)}(J_mV)_{\exp} \to \BH^T_{2k(m+1)}(J_mX)=H_T^{2c(m+1)}(J_mX) \to H_T^{2c(m+1)}(x).
\]
\end{lemma}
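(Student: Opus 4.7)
The plan is to realize both sides of the equality as restrictions of a single class on a relative jet scheme over $\P^1$, and then conclude via a homotopy argument on $\P^1$.

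First I will extend the deformation of $V$ to $C_xV$ to a deformation of the ambient. Let $\tilde M = M^\circ_x X \subset \Bl_{x\times\infty}(X\times\P^1)$ be the analogous open subset, giving a $T$-equivariant flat family $\pi\colon\tilde M\to\P^1$ with $\tilde M_s=X$ for $s\in\A^1$ and $\tilde M_\infty = T_xX$, containing $M\to\P^1$ as a closed subfamily (with $T$ acting trivially on $\P^1$). Since $x$ is a smooth point of $X$, $\tilde M$ is smooth and $\pi$ is a smooth morphism. The relative jet scheme $\tilde{\mathcal J}:=J_m(\tilde M/\P^1)$ is then smooth, with fiber $J_m(\tilde M_s)$ at each $s\in\P^1$ by the relative version of Lemma~\ref{smootharc}. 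The closed $T$-invariant subscheme $\mathcal J := J_m(M/\P^1)\subset\tilde{\mathcal J}$ contains $\mathcal Z$, and $\mathcal Z\to\P^1$ is flat: its restriction over $\A^1$ is $(J_mV)_{\exp}\times\A^1$, and the scheme-theoretic closure of this over the nonsingular curve $\P^1$ has no associated points over closed points. Since $\tilde{\mathcal J}$ is smooth, $[\mathcal Z]$ lives in $H^{2c(m+1)}_T\tilde{\mathcal J}$.

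Next I pick out the distinguished point. The proper transform of $\{x\}\times\P^1$ in $\tilde M$ defines a $T$-equivariant section $\P^1\to\tilde M$ hitting $x\in X$ on $\A^1$ and $0\in T_xX$ at $\infty$; composing with the zero jet section gives $\sigma\colon\P^1\to\tilde{\mathcal J}$. For each $s\in\P^1$, flatness of $\mathcal Z\to\P^1$ together with properness of the intersection (relative dimension $k(m+1)$ inside $\tilde{\mathcal J}_s$ of dimension $d(m+1)$) implies that the pullback of $[\mathcal Z]$ along $i_s\colon\tilde{\mathcal J}_s\hookrightarrow\tilde{\mathcal J}$ equals $[\mathcal Z_s]$ in $H^{2c(m+1)}_T\tilde{\mathcal J}_s$. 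At $s=0$ this is $[(J_mV)_{\exp}]\in H^*_T J_m X$, and further restriction along $\sigma(0)\colon\{x\}\hookrightarrow J_m X$ is exactly the right-hand side of the claimed equality. At $s=\infty$ it is $[Z_{x,m}(V)]$ by the very definition of $Z_{x,m}(V)$, and further restriction along $\sigma(\infty)\colon\{0\}\hookrightarrow J_m(T_xX)$, followed by the canonical identification $H^*_T J_m(T_xX)\cong H^*_T(\pt)$, yields the left-hand side.

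To conclude, I invoke homotopy invariance. The class $\sigma^*[\mathcal Z]$ lies in $H^*_T(\P^1)=H^*_T(\pt)[\zeta]/(\zeta^2)$, with $T$ acting trivially on $\P^1$; restriction to either $0$ or $\infty$ sends $\zeta$ to $0$, so the two specializations agree in $H^*_T(\pt)$. This furnishes the desired equality. The main technical step to verify will be smoothness of $\pi\colon\tilde M\to\P^1$ together with the compatibility $J_m(\tilde M/\P^1)_s = J_m(\tilde M_s)$, so that the relative jet scheme has the expected fiberwise structure; flatness of the scheme-theoretic closure $\mathcal Z$ over $\P^1$ is then automatic over a nonsingular curve.
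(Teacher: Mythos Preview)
Your argument is correct, and it takes a genuinely different route from the paper's.  The paper first observes that the relative jet scheme $J_m(M^\circ_x X/\P^1)$ is naturally isomorphic to the deformation to the normal cone $M^\circ_x(J_mX)$, then compares \emph{two} flat subfamilies inside it: the family $\mathcal Z$ (with special fiber $Z_{x,m}(V)$) and the deformation $M^\circ_x((J_mV)_{\exp})$ (with special fiber $C_x((J_mV)_{\exp})$, whose class computes the specialization of $[(J_mV)_{\exp}]$ at $x$).  Since these families agree at $s=0$, a general lemma (Lemma~\ref{l:flat-equal}) forces their fiber classes to agree at $s=\infty$.

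You bypass both the identification $J_m(M^\circ_x X/\P^1)\cong M^\circ_x(J_mX)$ and the second family: you work with the single global class $[\mathcal Z]\in H^{2c(m+1)}_T\tilde{\mathcal J}$, pull it back along a section $\sigma\colon\P^1\to\tilde{\mathcal J}$, and use that the two restrictions of $\sigma^*[\mathcal Z]\in H^*_T(\P^1)$ (trivial $T$-action) coincide.  This is slicker and avoids introducing Lemma~\ref{l:flat-equal}.  The paper's approach, on the other hand, is phrased so as to work verbatim in equivariant Chow groups---your homotopy step ``$H^*_T(\P^1)$ restricts identically to $0$ and $\infty$'' has a Chow-theoretic analogue, but it amounts precisely to Lemma~\ref{l:flat-equal}, so the paper's packaging is better adapted to that transfer.
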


\begin{proof}
The last restriction map $H_T^{2c(m+1)}(J_mX) \to H_T^{2c(m+1)}(x)$ factors through $H_T^*J_mX \to H_T^*(T_x(J_mX))$, via the specialization to the normal cone \cite[\S5.2]{FulIntersection}.  Furthermore, we have canonical isomorphisms
\[
  H_T^{2c(m+1)}(T_x(J_mX))=\BH^T_{2k(m+1)}(T_xJ_m(X))=\BH^T_{2k(m+1)}(J_m(T_xX)),
\]
so it suffices to show the two classes have equal image in this group.  %This will be a consequence of Lemma~\ref{l:flat-equal}

Finally, observe that the flat families $J_m(M^\circ_xX/\P^1) \to \P^1$ (the deformation constructed above, applied to $X$) and $M^\circ_x(J_mX) \to \P^1$ (the deformation to the normal cone) are naturally isomorphic.  Denote them both by $\mathcal{X}$.  Viewing $\mathcal{X}$ as $J_m(M^\circ_xX/\P^1)$, we have a flat subfamily $\mathcal{Z} \subseteq \mathcal{X}$, with $Z_0 = (J_mV)_{\exp}$ and $Z_\infty = Z_{x,m}(V)$.  Viewing $\mathcal{X}$ as $M^\circ_x(J_mX)$, we have a flat subfamily $M^\circ_x((J_mV)_{\exp}) \subseteq \mathcal{X}$, with fiber over $0$ equal to $(J_mV)_{\exp}$ and fiber over $\infty$ equal to $C_x((J_mV)_{\exp})$.  The claim now follows from a simple fact from intersection theory, Lemma~\ref{l:flat-equal} below.
\end{proof}

\begin{lemma}\label{l:flat-equal}
Suppose $\mathcal{X} \to \P^1$ is an equivariant flat family of algebraic schemes, and $\mathcal{Z}, \mathcal{V} \subseteq \mathcal{X}$ are equivariant flat (closed) subfamilies.  If $[Z_0]=[V_0]$ in $\BH^T_*(X_0)$ (or $A^T_*(X_0)$), then $[Z_\infty]=[V_\infty]$ in $\BH^T_*(X_\infty)$ (resp., $A^T_*(X_\infty)$).
\end{lemma}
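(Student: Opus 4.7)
The plan is to argue by the standard principle that fiber classes of a flat family over a smooth curve are propagated across the base via rational equivalence. Since $\mathcal{X}\to\P^1$ is flat and $\{0\},\{\infty\}\hookrightarrow\P^1$ are regular embeddings of codimension one, their pullbacks give regular embeddings $i_0\colon X_0\hookrightarrow\mathcal{X}$ and $i_\infty\colon X_\infty\hookrightarrow\mathcal{X}$, with refined Gysin homomorphisms $i_t^!\colon\BH^T_*(\mathcal{X})\to\BH^T_{*-2}(X_t)$. For any subfamily $\mathcal{W}\subseteq\mathcal{X}$ flat over $\P^1$, the fundamental compatibility $i_t^![\mathcal{W}]=[W_t]$ holds (equivariant analogue of Fulton, Prop.~10.1). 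Setting $\alpha=[\mathcal{Z}]-[\mathcal{V}]$, the hypothesis then reads $i_0^!\alpha=0$, and the goal is $i_\infty^!\alpha=0$.

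The key input is the linear equivalence of the divisors $X_0$ and $X_\infty$ on $\mathcal{X}$: both arise as pullbacks via $\pi\colon\mathcal{X}\to\P^1$ of a rational function on $\P^1$ with divisor $[0]-[\infty]$. Restricting this function to $\mathcal{Z}$ (which is nowhere identically zero or infinite on any component, by flatness of $\mathcal{Z}\to\P^1$), one obtains a principal divisor on $\mathcal{Z}$ with zero and pole loci $Z_0$ and $Z_\infty$; consequently $(i_0)_*[Z_0]=(i_\infty)_*[Z_\infty]$ in $\BH^T_{*-2}(\mathcal{X})$, and similarly for $\mathcal{V}$. Subtracting and invoking the hypothesis $[Z_0]=[V_0]$ yields $(i_\infty)_*([Z_\infty]-[V_\infty])=0$ in $\BH^T_{*-2}(\mathcal{X})$.

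The main obstacle is the final step: promoting this vanishing after $(i_\infty)_*$ to the desired equality $[Z_\infty]=[V_\infty]$ in $\BH^T_*(X_\infty)$, since $(i_\infty)_*$ need not be injective in general. In the setting where this lemma is invoked, namely the deformation to the normal cone used in Lemma~\ref{l:zv-equal}, one has $\mathcal{X}|_{\P^1\setminus\{0\}}\cong X_\infty\times\A^1$, so $X_\infty$ sits as a deformation retract of an open neighborhood in $\mathcal{X}$, and $(i_\infty)_*$ is injective in the relevant range by the open-closed long exact sequence. In full generality, I would instead work directly at the level of Gysin pullbacks, applying $i_\infty^!$ to $\alpha$ and using flatness of $\mathcal{Z},\mathcal{V}$ together with the identification $i_0^!\alpha=i_\infty^!\alpha$ coming from the rational equivalence $[\{0\}]\sim[\{\infty\}]$ on $\P^1$ (together with base change along the flat map $\pi$, so that $i_t^!$ factors through the specialization to a common generic fiber class in $\BH^T_*(\mathcal{X}|_{\P^1\setminus\{0,\infty\}})$). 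This second route is the more delicate part, and the heart of any clean proof of the lemma in the stated generality.
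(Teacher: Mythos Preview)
The paper does not prove this lemma; it simply declares it ``a simple exercise.''  Your analysis is in fact more careful than the paper's: you correctly isolate the two natural mechanisms (refined Gysin maps $i_t^!$ with $i_t^![\mathcal W]=[W_t]$ for flat $\mathcal W$, and the equality $(i_0)_*i_0^!=(i_\infty)_*i_\infty^!$ coming from $[0]=[\infty]$ on $\P^1$), and you correctly flag that promoting $(i_\infty)_*\bigl([Z_\infty]-[V_\infty]\bigr)=0$ to $[Z_\infty]=[V_\infty]$ is where the content lies.

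Two remarks.  First, a small slip: in the deformation to the normal cone as set up before Lemma~\ref{l:zv-equal}, one has $\mathcal{X}|_{\P^1\setminus\{\infty\}}\cong X_0\times\A^1$ (the special fiber sits at $\infty$), not the other way around.  The clean argument for the application is therefore the \emph{specialization map} $\sigma\colon \BH^T_*(X_0)\to\BH^T_*(X_\infty)$: identify $\BH^T_*(X_0)\cong\BH^T_{*+2}(X_0\times\A^1)=\BH^T_{*+2}(\mathcal{X}\setminus X_\infty)$, lift to $\BH^T_{*+2}(\mathcal{X})$, and apply $i_\infty^!$; this is well-defined because $N_{X_\infty/\mathcal X}$ is trivial.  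Both subfamilies used in Lemma~\ref{l:zv-equal} restrict over $\A^1$ to $Z_0\times\A^1$, so $[\mathcal Z]$ and $[\mathcal V]$ are each lifts of $[Z_0]=[V_0]$, whence $[Z_\infty]=\sigma([Z_0])=\sigma([V_0])=[V_\infty]$.  This replaces your injectivity-of-$(i_\infty)_*$ step and does not need it.

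Second, your hesitation about the fully general statement is well founded: for $A^T_*$ the lemma as written is false.  On a rational elliptic surface $\pi\colon\mathcal X\to\P^1$, take $\mathcal Z$ and $\mathcal V$ to be two sections meeting transversally at a single point (e.g.\ an exceptional divisor $E_1$ and the proper transform of a line through $p_1,p_2$), relabel so they meet over $0$, and choose $\infty$ with smooth fiber.  Then $Z_0=V_0$ as points, but $Z_\infty$ and $V_\infty$ are distinct points of the elliptic curve $X_\infty$, hence $[Z_\infty]\ne[V_\infty]$ in $A_0(X_\infty)$.  So the ``simple exercise'' is really only simple under the extra hypothesis---satisfied in the paper's sole use of the lemma---that the family is trivial away from one end, which is exactly what makes the specialization map available.
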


The proof of this lemma is a simple exercise.  We now prove the theorem.

\begin{proof}[Proof of Theorem~\ref{t:mult-invt}]
We start with Part \eqref{mult-inv1}.  Let $T_xV$ be the Zariski tangent space, and suppose it has dimension $k'\geq k$.  Write $N_x = T_xX/T_xV$, and $c' = \dim N_x$.  By the self-intersection formula, for any class $\alpha \in H_T^*(T_xV)$, we have $\iota^*\iota_*\alpha = c^T_{c'}(N_x)\cdot \alpha$, where $\iota\colon T_xV \hookrightarrow T_xX$ is the inclusion.

Now write $\nu$ for the class of $C_xV$ in $H_T^*(T_xV)$ and $\zeta$ for the class of $Z_{x,m}(V)$ in $H_T^*(J_m(T_xV))$.  Note that these classes are intrinsic to $V$.    Using the self-intersection formula, together with the fact that the normal space to $J_m(T_xV)$ in $J_m(T_xX)$ has top Chern class equal to $c_{c'}(N_x)^{m+1}$, we have
\begin{align*}
 [Z_{x,m}(V)] &= \zeta\cdot c^T_{c'}(N_x)^{m+1} ,\\
 [C_xV]  &= \nu \cdot c^T_{c'}(N_x).
\end{align*}

Finally, the basic construction of intersection theory identifies $[V]|_x$ with $[C_xV]$ in $H_T^*(x) = H_T^*(T_xX)$.  Using these observations and Lemma~\ref{l:zv-equal}, we have
\begin{align*}
  e^{T}_{x,m}(V) &= \frac{[(J_mV)_{\exp}]|_x}{c^T_d(T_xX)\cdot ([V]|_x)^m} \\
              &= \frac{[Z_{x,m}(V)]}{c^T_d(T_xX) \cdot [C_xV]^m} \\
              &= \frac{\zeta \cdot c_{c'}(N_x)^{m+1}}{ c^T_d(T_xX) \cdot \nu^m \cdot c_{c'}(N_x)^m } \\
              &= \frac{\zeta}{c^T_{k'}(T_xV) \cdot \nu^m}.
\end{align*}
Since the last expression is intrinsic to $V$, so is the equivariant multiplicity.

The proof of Part \eqref{mult-inv2} is much easier.  Let $N_{X/X'}$ be the normal bundle for the embedding $X \hookrightarrow X'$.  By Lemma~\ref{l:reltan}, $c^T_{top}(N_{J_mX/J_mX'}) = c^T_{top}(N_{X/X'})^{m+1}$.  Therefore
\begin{align*}
 j^*[(J_mV)_{\exp}]_{X'} &= [(J_mV)_{\exp}]_{X}\cdot c^T_{top}(N_{X/X'})^{m+1},\\
 j^*[V]_{X'}  &= [V]_X \cdot c_{top}(N_{X/X'}),
\end{align*}
and substituting these into the definition of $e^T_m(V)$ proves the claimed equality.
\end{proof}

%\begin{lemma}\label{l:def-cone}
%For any variety $V$ and any $G$-invariant subvariety $W\subseteq V$, there is a $G$-equivariant flat deformation from $(J_mV)_{\exp}$ to $(J_m(C_WV))_{\exp}$.
%\end{lemma}

%\begin{proof}
%Let $M=M^\circ_WV \subseteq \Bl_{W \times \infty}(V \times \P^1)$ be the total space of the deformation to the normal cone \cite[\S5]{FulIntersection}, so the projection $M \to \P^1$ is flat, with fibers identified as $M_s = V$ for $s\neq\infty$, and $M_\infty = C_WV$.  Let $J_m(M/\P^1) \to \P^1$ be the relative jet scheme (cf.~\cite{MusSingularities}), so the fiber over $s\in \P^1$ is naturally identified with $J_m(M_s)$.  Let $J_m(M/\P^1)_{\exp}$ be the union of all components of expected dimension $k(m+1)+1$.  We claim that the projection $p\colon J_m(M/\P^1)_{\exp} \to \P^1$ yields the desired flat deformation.

%Removing the fiber $p^{-1}(\infty)$, the family is isomorphic to $(J_mV)_{\exp} \times \A^1 \to \A^1$.
%\end{proof}

%%
% \comment{false: already for cusp, with $m=1$.}
%%

The higher-order multiplicities are already interesting, and difficult to compute, for affine plane curves $V\subseteq \A^2$, with $T=\C^*$.

\begin{example}\label{ex:node-mult}
Let $V=\{x^2-y^2=0\}$, with $T$ acting with weights $(1,1)$ (i.e., $z\cdot(a,b)=(za,zb)$).  Using induction on $m$, it is easy to show that $J_mV$ is pure-dimensional for all $m$, so we have $[J_mV]=[V]^{m+1} = (2t)^{m+1}$ for all $m$.  
%
% \comment{check: is the Ein-Mustata thm valid, e.g., for semi-normal instead of normal?}
%
On the other hand, $[\mJ_mV]=[V]=2t$ for all $m$.  So
\[
 e^T_{x,m} = {2}/{t} \quad \text{and} \quad \te^T_{x,m} = {1}/{(2^m t)}.
\]
\end{example}

\begin{example}\label{ex:cusp-mult}
As in Example~\ref{ex:cusp1}, let $V = \{x^3-y^2=0\}$, with $T$ acting with weights $(2,3)$ (i.e., $z\cdot(a,b)=(z^2a,z^3b)$).  The jet schemes $J_mV$ are pure-dimensional for $m<5$, so $[(J_m V)_{\exp}]=[V]^{m+1} = (6t)^{m+1}$ in this range.  Using Macaulay 2, we can compute $[\mJ_mV]$ for $m\leq 5$.  Noting that $c^T_2(\A^2)=6t^2$, the data are as follows:
\renewcommand{\arraystretch}{1.4}
\begin{equation*}
\begin{array}{|r||c|c|c|c|c|c|} \hline
  m            & 0   & 1      & 2      &  3    &  4     &  5 \\ \hline
 e^T_{x,m}(V)  & 1/t & 1/t    & 1/t    & 1/t   & 1/t    &  ? \\ \hline
\te^T_{x,m}(V) & 1/t & 1/(2t) & 1/(3t) & 1/(4t)& 1/(6t) & 1/(9t) \\ \hline
\end{array}
\end{equation*}
\renewcommand{\arraystretch}{1.0}

\noindent
The jet scheme $J_5V$ is not pure-dimensional; it has an irreducible component of codimension $5$ in addition to the ``main'' component $\mJ_5V$.  To compute the class $[(J_5V)_{\exp}]$, including embedded components, one needs to find the primary decomposition for the ideal of $J_5V$, which exhausted our computing capability.
\end{example}

\begin{example}
Let $V=\{x^5-y^2=0\}$, with $T$ acting by weights $(2,5)$.  Note that $[V]=10t$ and $c^T_2(\A^2)=10t^2$.  Using Macaulay 2, we compute:
\renewcommand{\arraystretch}{1.4}
\begin{equation*}
\begin{array}{|r||c|c|c|c|c|} \hline
  m            & 0   & 1      & 2      &  3       &  4      \\ \hline
 e^T_{x,m}(V)  & 1/t & 1/t    & 1/t    & 79/(50t) & ?    \\ \hline
\te^T_{x,m}(V) & 1/t & 1/(2t) & 1/(4t) & 3/(20t)  & 1/(10t)  \\ \hline
\end{array}
\end{equation*}
\renewcommand{\arraystretch}{1.0}

\noindent
The interesting entry is $e^T_{x,3}(V)$, since $J_3V$ is not pure dimensional.  According to Macaulay 2, there are three components of codimension $4$, yielding $[(J_3V)_{\exp}]=(1500+11000+3300)t^4 = 15800t^4$.
\end{example}

%%%%%%%%%%%%%%%%%%%%%%%%%%%%%%%%%%%%%%%%%%%%%%%%%
%%%%%%%%%%%%%%%%%%%%%%%%%%%%%%%%%%%%%%%%%%%%%%%%%
%%%%%%%%%%%%%%%%%%%%%%%%%%%%%%%%%%%%%%%%%%%%%%%%%
%%%%%%%%%%%%%%%%%%%%%%%%%%%%%%%%%%%%%%%%%%%%%%%%%
\section{Example: smooth toric varieties}\label{s:toric}
%%%%%%%%%%%%%%%%%%%%%%%%%%%%%%%%%%%%%%%%%%%%%%%%%
%%%%%%%%%%%%%%%%%%%%%%%%%%%%%%%%%%%%%%%%%%%%%%%%%
%%%%%%%%%%%%%%%%%%%%%%%%%%%%%%%%%%%%%%%%%%%%%%%%%
%%%%%%%%%%%%%%%%%%%%%%%%%%%%%%%%%%%%%%%%%%%%%%%%%

The goal of this section is to apply our results to give a new interpretation of 
the equivariant cohomology ring of a smooth toric variety.  We refer the reader to 
\cite{FulIntroduction} for an introduction to toric varieties.

Let $X = X(\Sigma)$ be a smooth $d$-dimensional toric variety corresponding to a fan $\Sigma$ in a lattice $N$ of rank $d$, and let $T$ be the dense torus acting on $X$.  Let $v_{1}, \ldots, v_{r}$ denote the primitive integer vectors of the rays in $\Sigma$ and let $D_{1}, \ldots, D_{r}$ denote the corresponding torus-invariant prime divisors of $X$.  
The \define{Stanley-Reisner ring} $\SR(\Sigma)$ is the quotient of $\Z[x_1, \ldots, x_r]$ by the ideal generated by monomials of the form $x_{i_1}\cdots x_{i_s}$, such that $v_{i_{1}}, \ldots, v_{i_s}$ do not span a cone in $\Sigma$. 
The equivariant cohomology ring of $X$ may be described as follows:

\begin{theorem}\cite[Theorem 8]{BDPCohomology}\label{BDP}
With the notation above, there is an isomorphism $H^*_T X \cong \SR(\Sigma)$, sending 
$[D_i]$ to $x_i$.   
\end{theorem}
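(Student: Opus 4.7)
The plan is to apply Theorem~\ref{t:basis} with $D = X\setminus T$, use Ishii's classification of $J_\infty T$-orbits on $J_\infty X$ to produce a $\Z$-basis of $H_T^*X$ indexed by $|\Sigma|\cap N$, and then identify this basis with the standard monomial basis of $\SR(\Sigma)$ via the multiplication formula of \S\ref{s:mult2}.

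First, since $T$ acts freely on $X\setminus D = T$, the unipotent-stabilizer hypothesis of Theorem~\ref{t:basis} is immediate. By a theorem of Ishii \cite{IshArc}, the $J_\infty T$-orbits on $J_\infty X \setminus J_\infty D$ are parametrized by $v \in |\Sigma|\cap N$: writing $\sigma$ for the unique cone of $\Sigma$ whose relative interior contains $v$, with rays $v_{i_1},\dots,v_{i_k}$ and $v = \sum_j a_j v_{i_j}$ (each $a_j > 0$, since $\sigma$ is simplicial), the orbit $O_v$ consists of arcs $\gamma$ with $\ord_\gamma(D_{i_j}) = a_j$ for $j \leq k$ and $\ord_\gamma(D_{i'}) = 0$ for the remaining rays. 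Each $O_v$ has trivial stabilizer in $J_\infty T$, and $\overline{O_v} = \Cont^{\geq \ab}(D_{i_\bullet})$ with $\ab = (a_1,\dots,a_k)$. Starting from the filtration by closed cylinders $Z_j = \Cont^{\geq j}(D)$ and refining by peeling off the finitely many codimension-$j$ orbits one at a time within the locally closed stratum $\Cont^j(D)$, we obtain an equivariant affine paving in the sense of Definition~\ref{d:paving}, and Theorem~\ref{t:basis} then produces
\[
H_T^*X = \bigoplus_{v\in|\Sigma|\cap N} \Z\cdot[\overline{O_v}].
\]

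Next, to show $[\overline{O_v}] = \prod_j [D_{i_j}]^{a_j}$, relabel so that $a_1 \geq a_2 \geq \cdots \geq a_k > 0$ and apply Theorem~\ref{t:smooth} to the nested chain of smooth invariant subvarieties $V_k \subseteq \cdots \subseteq V_1$ defined by $V_j = D_{i_1}\cap\cdots\cap D_{i_j}$, with multiplicities $\mm = (a_1-a_2,\,\dots,\,a_{k-1}-a_k,\,a_k)$. The singular-locus hypothesis is vacuous since each $V_j$ is smooth; a direct computation gives $\lambda(\mm) = \ab$, $[V_j] = [D_{i_1}]\cdots[D_{i_j}]$ (by transversality of the SNC divisors), and $\Cont^{\geq \lambda(\mm)}(V_\bullet) = \Cont^{\geq \ab}(D_{i_\bullet}) = \overline{O_v}$, so Theorem~\ref{t:smooth} yields $\prod_j [D_{i_j}]^{a_j} = [\overline{O_v}]$.

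Finally, for the Stanley-Reisner relations: if $v_{i_1},\dots,v_{i_s}$ do not span a cone of $\Sigma$, then $D_{i_1}\cap\cdots\cap D_{i_s} = \emptyset$, and since the $D_{i_j}$ are smooth, transverse divisors the equivariant product $[D_{i_1}]\cdots[D_{i_s}]$ represents the class of this empty transverse intersection and hence vanishes (under hypothesis~\eqref{hypothesis}, which is satisfied whenever $X$ has a torus fixed point, this is a direct consequence of Example~\ref{e:ncd}). The ring map $\SR(\Sigma) \to H_T^*X$ sending $x_i \mapsto [D_i]$ is therefore well defined; under the bijection between $|\Sigma|\cap N$ and the standard monomial basis $\{\prod x_{i_j}^{a_j}\}$ of $\SR(\Sigma)$ indexed by $v = \sum a_j v_{i_j}$ in the unique cone containing $v$ in its interior, it carries the latter bijectively onto $\{[\overline{O_v}]\}$, proving it is an isomorphism. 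The main obstacle is the identification $\overline{O_v} = \Cont^{\geq \ab}(D_{i_\bullet})$ via Ishii's orbit classification, together with the verification that the constructed filtration is genuinely an equivariant affine paving (in particular, that each excised stratum is a single affine family of orbits, which forces one to peel off orbits individually within each codimension stratum).
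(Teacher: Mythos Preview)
Your proposal is correct and follows essentially the same route as the paper's own reproof in Corollary~\ref{tv}: Ishii's orbit classification (Lemma~\ref{Ish}) supplies the equivariant affine paving, Theorem~\ref{t:basis} gives the $\Z$-basis indexed by $|\Sigma|\cap N$, and a multiplication formula identifies this basis with the monomial basis of $\SR(\Sigma)$.

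The one genuine difference is your choice of multiplication result. The paper invokes Example~\ref{e:ncd} (hence Corollary~\ref{c:multiplication}, which rests on hypothesis~\eqref{hypothesis}) to obtain $[D_1]^{m_1}\cdots[D_s]^{m_s}=[\Cont^{\ge\mathbf{m}}(D)]$ directly for the full SNC divisor. You instead pass through Theorem~\ref{t:smooth} applied to the nested chain $V_j=D_{i_1}\cap\cdots\cap D_{i_j}$; since these are smooth the singular-locus hypothesis is vacuous, and you correctly verify $\Cont^{\ge\lambda(\mathbf{m})}(V_\bullet)=\Cont^{\ge\mathbf{a}}(D_{i_\bullet})$ using that $\ord_\gamma(V_j)=\min_{l\le j}\ord_\gamma(D_{i_l})$ and that the $a_l$ are decreasing. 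This buys you something: Theorem~\ref{t:smooth} does not require hypothesis~\eqref{hypothesis}, so your argument for the nonzero products goes through even when $X^T=\emptyset$ (i.e., $\Sigma$ has no top-dimensional cones). For the Stanley--Reisner vanishing you revert to Example~\ref{e:ncd} and hence to~\eqref{hypothesis}; note that the elementary transversality statement you allude to---that $[D_{i_1}]\cdots[D_{i_s}]=[D_{i_1}\cap\cdots\cap D_{i_s}]=0$ for smooth transverse divisors with empty intersection---is valid in equivariant cohomology without~\eqref{hypothesis}, so you could drop that caveat entirely. Your explicit check that the orbits within a fixed codimension stratum can be peeled off one at a time (using incomparability under $\le^\Sigma$) is a detail the paper leaves implicit in its citation of Lemma~\ref{Ish}.
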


Our goal is to apply our results to give a new geometric proof of this fact. 
Observe that $\SR(\Sigma)$ has a $\Z$-basis indexed by lattice points in $N$ which lie in the support $|\Sigma|$ of $\Sigma$: if $\sigma$ is a maximal cone with primitive integer vectors $v_{i_{1}}, \ldots, v_{i_d}$, then a lattice point $v = \sum_{j = 1}^d a_j v_{i_j}$ corresponds 
to the monomial $x^v := x_{i_{1}}^{a_1}\cdots x_{i_{d}}^{a_d}$ in $\SR(\Sigma)$.  In fact, 
$\SR(\Sigma)$ is isomorphic to the \define{deformed group ring} $\Z[N]^{\Sigma}$: 
this is the $\Z$-algebra with $\Z$-basis $\{ y^v \mid v \in |\Sigma| \cap N \}$ 
and multiplication defined by 
\begin{equation}\label{deformed}
y^u \cdot y^v = \left\{ \begin{array}{ll}
y^{u + v} & \textrm{ if } u,v \in \sigma \textrm{ for some } \sigma \in \Sigma, \\
0 & \textrm{ otherwise}.
\end{array} \right.
\end{equation} 
On the other hand, with the notation above, for each $v = \sum_{j = 1}^d a_j v_{i_j} \in |\Sigma| \cap N$ consider the cylinder $\Cont^{v}(D) := \bigcap_{1 \leq j \leq d} \Cont^{a_{j}} (D_{i_j})$ 
in $J_{\infty} X$.  One verifies the decomposition 
\[
  J_{\infty} X \setminus \bigcup_i J_{\infty} D_i = \coprod_{v \in |\Sigma| \cap N} \Cont^v (D). 
\]
We will let $\Cont^{\ge v}(D)$ denote the closure of $\Cont^{v}(D)$ in $J_{\infty} X$, and 
define a partial order $\leq^{\Sigma}$ on $|\Sigma| \cap N$ by setting $v \leq^{\Sigma} w$ if 
$w - v$ lies in some maximal cone in $\Sigma$ containing $v$ and $w$.  The following lemma may 
be deduced from the case when $X = \A^d$ (see Example~\ref{ex:t-linear}), and also follows from 
a more general result of Ishii. 

\begin{lemma}\cite{IshArc}\label{Ish}
The cylinders $\{ \Cont^v (D) \mid v \in |\Sigma| \cap N \}$ are precisely the  $J_{\infty} T$-orbits of 
$J_{\infty} X \setminus \bigcup_i J_{\infty} D_i$, and $\Cont^{\ge v}(D) \setminus   \bigcup_i J_{\infty} D_i  = \coprod_{v \leq^{\Sigma} w}
\Cont^w (D)$.   
\end{lemma}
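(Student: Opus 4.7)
The plan is to reduce to the affine case $X = \A^d$ by working in torus-invariant affine charts, then exploit the explicit parametric form of arcs in $\A^d$ to identify both the $J_\infty T$-orbits and their closures.

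The reduction rests on the following observation: any arc $\gamma \in J_\infty X \setminus \bigcup_i J_\infty D_i$ has generic point in the dense torus $T$ (since it avoids every boundary divisor), while its special point $\gamma(0)$ lies in some torus orbit $O_\sigma$.  Choosing any maximal cone $\sigma'$ of $\Sigma$ containing $\sigma$, the affine chart $U_{\sigma'}$ is an open neighborhood of $\gamma(0)$, and since $\Spec \C[[t]]$ is local, $\gamma$ factors through $U_{\sigma'}$.  On $U_{\sigma'} \cong \A^d$, with $T$ acting diagonally with weights dual to the primitive generators $v_{i_1},\dots,v_{i_d}$ of $\sigma'$ (cf.\ Example~\ref{ex:t-linear}), the arc $\gamma$ is recorded by a tuple $(t^{a_1} u_1(t),\dots, t^{a_d} u_d(t))$ with $a_j \ge 0$ and $u_j(t) \in \C[[t]]^\times$.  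Setting $v = \sum_j a_j v_{i_j} \in \sigma' \cap N$ exhibits $\gamma \in \Cont^v(D)$, and the tuple of units $(u_j'/u_j) \in J_\infty T$ translates any other arc in $\Cont^v(D) \cap J_\infty U_{\sigma'}$ to $\gamma$; since arcs in $\Cont^v(D)$ share the same support cone they all fit into a common chart $U_{\sigma'}$, so $\Cont^v(D)$ is a single $J_\infty T$-orbit.

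For the closure identity, if $v \leq^{\Sigma} w$ via a common maximal cone $\sigma'$, then expanding $w - v$ in the $\sigma'$-basis shows the contact-order inequalities defining $\Cont^{\ge v}(D)$ are implied by those for $\Cont^w(D)$, giving $\Cont^w(D) \subseteq \Cont^{\ge v}(D)$.  Conversely, take $\gamma \in \Cont^w(D) \cap \Cont^{\ge v}(D)$, with $\gamma \in J_\infty U_{\sigma'}$ chosen as above; then $w \in \sigma'$ automatically, and the hypothesis $\gamma \in \Cont^{\ge v}(D)$ forces every ray $v_i$ appearing with positive coefficient in $v$ to satisfy $\ord_\gamma D_i > 0$, hence to lie in the support cone of $\gamma(0)$, which is contained in $\sigma'$.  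Therefore $v \in \sigma'$ and $w - v$ has non-negative $\sigma'$-coordinates, so $v \leq^{\Sigma} w$.

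The main obstacle is a bookkeeping issue in the very definition of $\Cont^v(D)$: one must verify that the expansion $v = \sum a_j v_{i_j}$, and hence the cylinder, is unambiguous when $v$ lies in more than one maximal cone.  This follows from the smoothness of $\Sigma$ (each cone is spanned by part of a $\Z$-basis), which forces the coefficients of $v$ to agree with those of its unique expansion in the minimal cone containing it.  After this is settled, the rest of the argument is a routine unwinding of coordinates on $U_{\sigma'}$.
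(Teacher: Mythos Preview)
Your argument is correct and is precisely the deduction the paper points to: the lemma is stated with a citation to Ishii and the remark that it ``may be deduced from the case when $X = \A^d$ (see Example~\ref{ex:t-linear}),'' and your proof carries out exactly that reduction via the affine charts $U_{\sigma'}$. The one place to tighten the wording is the forward inclusion, where you speak of ``the contact-order inequalities defining $\Cont^{\ge v}(D)$''; since $\Cont^{\ge v}(D)$ is \emph{defined} as a closure, you should say instead that in the chart $J_\infty U_{\sigma'}$ the closure of $\Cont^v(D)$ is visibly $\{\ord\gamma_j \ge a_j\}$, which contains $\Cont^w(D)$, and closures only grow upon passing to $J_\infty X$.
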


We are now ready to state our geometric interpretation of the equivariant cohomology ring of $X$. 

\begin{corollary}\label{tv}
There is a natural isomorphism $H^*_T X \cong \SR(\Sigma)$ such that the class 
$[\Cont^{ \ge v} (D)] \in H_T^* X$ corresponds to the monomial $x^v \in \SR(\Sigma)$, 
for each lattice point $v \in |\Sigma| \cap N$. 
\end{corollary}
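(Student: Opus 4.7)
The plan is to combine Theorem~\ref{t:basis} with Example~\ref{e:ncd} to produce a ring isomorphism $\Z[N]^\Sigma \cong H^*_T X$ sending $y^v \mapsto [\Cont^{\ge v}(D)]$, which then yields the result via the identification $\Z[N]^\Sigma \cong \SR(\Sigma)$. First Theorem~\ref{t:basis} supplies the additive structure, and then Example~\ref{e:ncd} encodes the multiplicative rule \eqref{deformed}.

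For the additive part, note that $T$ acts freely on $X \setminus D = T$, so stabilizers are unipotent. By Ishii's Lemma~\ref{Ish}, the $J_\infty T$-orbits on $J_\infty X \setminus J_\infty D$ are precisely the cylinders $\Cont^v(D)$ for $v \in |\Sigma| \cap N$. To organize these into an equivariant affine paving, enumerate lattice points as $v_0, v_1, v_2, \ldots$ so that $\codim \Cont^{v_i}(D)$ is weakly increasing; this refines $\leq^{\Sigma}$, since if $v \leq^{\Sigma} w$ then writing both in the ray generators of a common maximal cone shows $\codim \Cont^v(D) \leq \codim \Cont^w(D)$. Setting
\[
Z_j \;=\; J_\infty D \,\cup\, \bigcup_{i \geq j} \Cont^{v_i}(D) \;=\; J_\infty D \,\cup\, \bigcup_{i \geq j} \Cont^{\ge v_i}(D)
\]
(using Lemma~\ref{Ish} to see that closures stay within later orbits and $J_\infty D$) gives a descending chain of closed $J_\infty T$-invariant cylinders with successive quotients $Z_j \setminus Z_{j+1} = \Cont^{v_j}(D)$, each a single $J_\infty T$-orbit with trivial (hence contractible) stabilizer, and therefore an affine family of orbits in the sense of Definition~\ref{d:contractible}. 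Theorem~\ref{t:basis} then gives $H^*_T X = \bigoplus_v \Z \cdot [\Cont^{\ge v}(D)]$.

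For the multiplicative part, $D = D_1 + \cdots + D_r$ is a $T$-invariant simple normal crossings divisor on the smooth toric variety $X$. Assuming hypothesis $(*)$ of Theorem~\ref{t:multiplication}, Example~\ref{e:ncd} applies to yield $[D_1]^{m_1}\cdots[D_r]^{m_r} = [\Cont^{\ge \mathbf{m}}(D)]$ for every tuple $\mathbf{m} = (m_1,\ldots,m_r) \in \Z^r_{\ge 0}$. If the rays $\{v_i : m_i > 0\}$ fail to span a cone in $\Sigma$, then $\Cont^{\ge \mathbf{m}}(D) = \emptyset$ and the product vanishes; otherwise these rays span a cone, $v := \sum_i m_i v_i$ lies in $|\Sigma| \cap N$, and $\Cont^{\ge \mathbf{m}}(D) = \Cont^{\ge v}(D)$. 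This is exactly the multiplication rule \eqref{deformed} of $\Z[N]^\Sigma$, so the assignment $y^v \mapsto [\Cont^{\ge v}(D)]$ (equivalently, $x_i \mapsto [D_i]$) extends to a well-defined ring homomorphism $\SR(\Sigma) \to H^*_T X$ carrying the monomial $\Z$-basis of the Stanley-Reisner ring bijectively to the basis produced above, hence is a ring isomorphism.

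The main obstacle is that hypothesis $(*)$ is not automatic for a general smooth toric variety: the fixed locus $X^T$ is finite only when every maximal cone of $\Sigma$ is $d$-dimensional, which fails in the non-complete case. I would circumvent this by embedding $\Sigma$ into a smooth complete fan $\Sigma'$ via a toric completion followed by toric resolution of singularities, producing an open equivariant inclusion $\iota\colon X(\Sigma) \hookrightarrow X(\Sigma')$. On the smooth complete toric variety $X(\Sigma')$, $X(\Sigma')^T$ is finite and the standard equivariant localization (via the Bia\l ynicki-Birula filtration associated to a generic one-parameter subgroup) gives the required injectivity over $\Z$; hence Example~\ref{e:ncd} holds on $X(\Sigma')$. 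The relevant divisors and contact cylinders on $X(\Sigma)$ are the restrictions under $\iota$ of those on $X(\Sigma')$, so the multiplication formula pulls back to establish the claim on $X(\Sigma)$.
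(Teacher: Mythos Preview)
Your argument follows the paper's approach exactly: apply Theorem~\ref{t:basis} via Lemma~\ref{Ish} for the additive basis, then Example~\ref{e:ncd} for the multiplication rule~\eqref{deformed}.

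The paper's proof simply invokes Example~\ref{e:ncd} without commenting on hypothesis~\eqref{hypothesis}; you are right to flag that for a general smooth fan $\Sigma$ the fixed locus $X^T$ may be empty (e.g.\ when $\Sigma$ has no $d$-dimensional cones), so the injectivity in~\eqref{hypothesis} can fail outright. Your workaround---embed $\Sigma$ as a subfan of a smooth complete fan $\Sigma'$, verify the multiplication on $X(\Sigma')$, and pull back along the open inclusion $X(\Sigma)\hookrightarrow X(\Sigma')$---is a correct way to close this gap, and is in the spirit of the localization argument underlying Theorem~\ref{t:multiplication}. Two small refinements: take $\Sigma'$ \emph{projective} rather than merely complete, so that the Bia\l ynicki--Birula paving gives injectivity of $H_T^*X(\Sigma')\to H_T^*(X(\Sigma')^T)$ over $\Z$ directly; and note that since the cones of $\Sigma$ are already smooth, the toric resolution step in constructing $\Sigma'$ can be confined to the cones added in the completion, so $\Sigma$ indeed survives as a subfan.
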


\begin{proof}
It follows from Lemma~\ref{Ish} that $J_{\infty} X \setminus \bigcup_i J_{\infty} D_i = \coprod_{ v \in |\Sigma| \cap N} \Cont^v (D)$ is an equivariant affine paving, and hence Theorem~\ref{t:basis} implies that the classes $\{ [\Cont^{ \ge v} (D)]  \mid v \in |\Sigma| \cap N \}$ form a $\Z$-basis of $H_T^* X$.  Moreover, it follows from Example~\ref{e:ncd} that these classes satisfy the 
multiplication rule \eqref{deformed}:
\begin{equation*}
 [\Cont^{ \ge u} (D)]  \cdot  [\Cont^{ \ge v} (D)]  = \left\{ \begin{array}{ll}
 [\Cont^{ \ge u + v} (D)]  & \textrm{ if } u,v \in \sigma \textrm{ for some } \sigma \in \Sigma, \\
0 & \textrm{ otherwise}.
\end{array} \right.
\end{equation*}
\end{proof}

\begin{remark}\label{r:toric}
In Section~\ref{jet&eq}, we described how one can compare equivariant cohomology rings under proper, birational morphisms. In the toric setting we have the following application:
a proper, birational morphism $f \colon Y(\Delta)  \rightarrow X(\Sigma)$ between smooth toric varieties corresponds to a refinement $\Delta$ of a fan $\Sigma$  in a lattice $N$. Let $\psi$ and $\phi$ denote the piecewise linear functions on $|\Delta| = |\Sigma|$ with value $1$ on the primitive integer vectors of $\Delta$ and $\Sigma$, respectively, and let $E$ and $D$ denote the union of the torus-invariant divisors of $Y$ and $X$, respectively.  We have a bijection between $\Z$-bases of $H^*_T Y$ and $H^*_T X$ such that, for each $v \in |\Delta| \cap N$, 
$\Cont^{v} (E) \subseteq \Cont^{\phi(v) - \psi(v)} (K_{Y/X})$ and
\[
[\Cont^{ \ge v} (E)] \in H_T^{2\psi(v)} Y, \; \; \; [\Cont^{ \ge v} (D)] \in H_T^{2\phi(v)} X. 
\]
\end{remark}

\begin{remark}\label{pretoric}
\emph{Toric prevarieties} are not necessarily separated analogues of toric varieties which first arose in W{\l}odarczyk's work on embeddings of varieties \cite{WloEmbeddings}.  The geometry of a toric prevariety is controlled by an associated multi-fan\footnote{Roughly speaking, a multi-fan is a fan where one does not require two cones to intersect along a common face.}, and we refer the reader to Section 4 in \cite{PayEquivariant} for an introduction to the subject.  The analogue of Corollary~\ref{tv} holds in this case: if $X = X(\Sigma)$ is a smooth $d$-dimensional toric prevariety associated to a multi-fan $\Sigma$ in a lattice $N$ of rank $d$, then the equivariant cohomology ring $H_T^*X$ is isomorphic to the Stanley-Reisner ring of $\Sigma$ \cite{PayEquivariant}.  On the other hand, if $D_1, \ldots, D_r$ denote the $T$-invariant divisors of $X$, the classes 
\[
  \left\{  [\Cont^{\ge \ab} (D_\bullet)] \in H^*_T X \mid \ab =  (a_1,\ldots, a_r) \in \N^r, \; \bigcap_{a_i > 0} D_i \ne \emptyset \right\} 
\]
form a $\Z$-basis of $H_T^*X$, corresponding to a monomial basis of $\SR(\Sigma)$. 
\end{remark}

\begin{remark}
\emph{Hypertoric varieties} may be viewed as a complex-symplectic analogue of toric varieties; 
their geometry is related to the combinatorics of matroids and hyperplane arrangements.  (We refer the reader to \cite{HSToric} and \cite{ProSurvey} for an introduction to the subject.)  A smooth $2d$-dimensional hypertoric variety $Y$ comes with the action of a $d$-dimensional torus $T$, and  
Proudfoot and Webster \cite{PWIntersection} observed that there is an associated smooth toric prevariety $X = X(\Sigma)$ with torus $T$, and a natural $T$-equivariant affine bundle $p: Y \rightarrow X$.  In particular, $H^*_T Y \cong H^*_T X$.  With the notation of Remark~\ref{pretoric}, the classes 
\[
  \left\{ [\Cont^{\ge \ab} (p^{-1}(D_\bullet))] \in H^*_T Y \mid (a_1,\ldots, a_r) \in \N^r, \; \cap_{a_i > 0} \, p^{-1}(D_i) \ne \emptyset \right\} 
\]
therefore form a $\Z$-basis of $H^*_T Y$, corresponding to a monomial basis of $\SR(\Sigma)$. 
\end{remark}

%%%%%%%%%%%%%%%%%%%%%%%%%%%%%%%%%%%%%%%%%%%%%%%%%
%%%%%%%%%%%%%%%%%%%%%%%%%%%%%%%%%%%%%%%%%%%%%%%%%
%%%%%%%%%%%%%%%%%%%%%%%%%%%%%%%%%%%%%%%%%%%%%%%%%
%%%%%%%%%%%%%%%%%%%%%%%%%%%%%%%%%%%%%%%%%%%%%%%%%
\section{Example: determinantal varieties and $GL_n$}\label{s:gln}
%%%%%%%%%%%%%%%%%%%%%%%%%%%%%%%%%%%%%%%%%%%%%%%%%
%%%%%%%%%%%%%%%%%%%%%%%%%%%%%%%%%%%%%%%%%%%%%%%%%
%%%%%%%%%%%%%%%%%%%%%%%%%%%%%%%%%%%%%%%%%%%%%%%%%
%%%%%%%%%%%%%%%%%%%%%%%%%%%%%%%%%%%%%%%%%%%%%%%%%

In this section, we apply our results to give a new interpretation of the 
$GL_n$-equivariant cohomology ring of a partial flag variety via contact loci of determinantal varieties.

Consider  $G = GL_n (\C)$ acting by left multiplication on the variety of $n \times n$ matrices $M_{n,n} = M_{n,n}(\C)$. Since $M_{n,n}$ is contractible, Lemma~\ref{l:homotopy} implies that 
$H^*_G M_{n,n} \cong  H^*_G (\pt) = \Lambda_G$. Our first aim is to present a natural, geometric $\Z$-basis for $\Lambda_G$.  Consider the chain of closed subvarieties
\[
  V_n \subseteq \cdots \subseteq V_1 \subseteq V_0 = M_{n,n}, 
\]
where
\[
  V_r = \{ A = (a_{i,j}) \in M_{n,n} \mid \rk (a_{i,j})_{1 \le j \le n + 1 - r } < n + 1 - r \}.
\]
That is, $V_r$ is the subvariety of $M_{n,n}$ defined by setting all $(n+1 -r) \times (n+1 -r)$ minors involving the first $n+1-r$ columns equal to zero.  It is well known that $V_r$ is a normal, irreducible variety of codimension $r$ in $M_{n,n}$.

\begin{remark}
Note that $V_n \cong \A^{n(n - 1)}$ is a smooth ($T$-equivariant) complete intersection, 
and $V_1$ is a singular hypersurface provided $n \ge 2$.  On the other hand, $V_r$ is not a local complete intersection variety for $1 < r < n$. 
\end{remark}

%\comment{When is $V_i$ $\Q$-Gorenstein?}

The jet schemes of determinantal varieties have been studied by Musta{\c{t}}{\u{a}}~\cite{MusJet}, 
Yuen~\cite{YueJet}, Ko{\v{s}}ir and Sethuraman~\cite{KSDeterminantal}, and 
Docampo~\cite{DocArcs}.  We will use the following fact:

\begin{theorem}[{\cite[Theorem~3.1]{KSDeterminantal}}]\label{t:determinantal}
The jet schemes $J_m V_r$ are irreducible for all $m \ge 0$ and $1 \le r \le n$.  
\end{theorem}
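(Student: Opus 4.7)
Since only the first $n+1-r$ columns of $A\in V_r$ enter the defining equations, there is a scheme isomorphism $V_r\cong W_{n,n+1-r}\times M_{n,r-1}$, where $W_{p,q}\subseteq M_{p,q}$ (for $p\ge q$) denotes the classical maximal-minor determinantal variety of $p\times q$ matrices of rank less than $q$. Since $J_m$ commutes with products and $J_m M_{n,r-1}\cong\A^{(m+1)n(r-1)}$ is irreducible, the problem reduces to showing that $J_m W_{p,q}$ is irreducible for all $p\ge q\ge 1$.

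My plan is to identify a single irreducible component and show that it exhausts the whole jet scheme set-theoretically. The smooth locus $\sm(W_{p,q})=W_{p,q}\setminus W_{p,q}^{\le q-2}$ is irreducible of dimension $pq-(p-q+1)$, so by Lemma~\ref{smootharc} the subscheme $\mJ_m W_{p,q}:=\overline{J_m\sm(W_{p,q})}$ is an irreducible component of $J_m W_{p,q}$ of dimension $(m+1)\dim W_{p,q}$. It remains to prove that every $m$-jet $\gamma$ with $\gamma(0)\in W_{p,q}^{\le q-2}$ also lies in $\mJ_m W_{p,q}$. Equivalently, I need a construction that deforms any such $\gamma$ within $J_m W_{p,q}$ to an $m$-jet based at a smooth point of $W_{p,q}$; the existence of this deformation for every $\gamma$ would imply $J_m W_{p,q}=\mJ_m W_{p,q}$.

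To construct the deformation systematically, I would exploit the $GL_p\times GL_q$-action on $M_{p,q}$ by $(g,h)\cdot A=gAh^{-1}$, which preserves each rank stratum $W_{p,q}^{\le s}\setminus W_{p,q}^{\le s-1}$ as a single orbit of dimension $s(p+q-s)$ and lifts to a $J_m GL_p\times J_m GL_q$-action on $J_m M_{p,q}$. The action reduces the problem to analyzing $m$-jets $\gamma$ whose $0$-jet $A_0$ is in a fixed normal form, with a single $s\times s$ identity block and zeros elsewhere. In this normal form, writing $\gamma(t)=A_0+A_1 t+\cdots+A_m t^m$, the vanishing of the $q\times q$ minors modulo $t^{m+1}$ becomes a triangular cascade of polynomial conditions on the Taylor coefficients $A_j$, in which the order-$j$ equations constrain $A_j$ through its $(q-s)\times(q-s)$ complementary block modulo lower-order data; working order by order should produce an explicit deformation increasing the rank of $A_0$ while preserving the higher-order constraints.

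The main obstacle is carrying out this deformation construction uniformly in $s$, $m$, $p$, $q$: the triangular cascade becomes combinatorially intricate as the Taylor orders interact with the determinantal conditions. A promising organizational tool is the Kempf--Lascoux--Weyman desingularization $\widetilde W_{p,q}=\{(H,A)\in \Gr(q-1,\C^p)\times M_{p,q}:\im A\subseteq H\}$, a vector bundle over the Grassmannian and hence smooth and irreducible, with a proper birational map $\widetilde W_{p,q}\to W_{p,q}$. The induced map $J_m\widetilde W_{p,q}\to J_m W_{p,q}$ has irreducible image of dimension $(m+1)\dim W_{p,q}$, necessarily equal to $\mJ_m W_{p,q}$; although it is not surjective---an $m$-jet $A(t)$ lifts iff its Taylor expansion admits a factorization $A(t)=H(t)\alpha(t)$ with $H(t)$ a $p\times(q-1)$ matrix over $\C[t]/(t^{m+1})$, which can genuinely fail---one expects the non-lifting locus to be approximable by lifts, so that a careful analysis of the associated infinitesimal deformation theory, perhaps by induction on $q$, would furnish the missing deformations and complete the proof.
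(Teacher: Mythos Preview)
The paper does not prove this theorem: it is quoted from \cite{KSDeterminantal}, and the only commentary is the remark following the statement, which records the easy cases $r=n$, $r=1$, and $r=n-1$. So there is no in-paper argument to compare against; one can only assess your proposal on its own terms and against the Ko\v{s}ir--Sethuraman proof.

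Your reduction $V_r\cong W_{n,n+1-r}\times M_{n,r-1}$ and the identification of the main component $\mJ_m W_{p,q}$ are correct and standard. The genuine gap is exactly where you flag it: you never produce the deformation showing that an arbitrary $m$-jet based at a point of corank $\ge 2$ lies in $\mJ_m W_{p,q}$. The phrase ``should produce an explicit deformation'' is not a proof, and the desingularization route runs into the obstacle you yourself identify---the map $J_m\widetilde W_{p,q}\to J_m W_{p,q}$ is not surjective, and ``one expects the non-lifting locus to be approximable by lifts'' is precisely the statement to be proved. An $m$-jet $A(t)$ whose Smith normal form over $\C[t]/(t^{m+1})$ has elementary divisors $t^{a_1},\ldots,t^{a_q}$ with $\sum a_i\le m$ will fail to lift, and there are many such; showing these are all limits of liftable jets is the entire content of the theorem in the hard cases.

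For comparison, the argument in \cite{KSDeterminantal} does not proceed via deformation to smooth points or via the Kempf--Lascoux--Weyman resolution. They work directly with the coordinate ring of $J_m W_{p,q}$, viewed as matrices over $\C[t]/(t^{m+1})$, and establish primeness of the defining ideal by an inductive analysis of the equations (essentially a Gr\"obner-type argument on the minors and their ``Taylor coefficients''). If you want to complete your approach, the cleanest route is probably to use the $J_m GL_p\times J_m GL_q$-action to put $A(t)$ in Smith normal form $\diag(t^{a_1},\ldots,t^{a_q})$ with $a_1\le\cdots\le a_q$ and $a_q\ge m+1$ (this last inequality is the condition $A(t)\in J_m W_{p,q}$), and then exhibit an explicit one-parameter degeneration from a generic jet to this normal form; but carrying this out for all exponent vectors $(a_1,\ldots,a_q)$ is real work and is not done in your proposal.
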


%\comment{Do they prove $J_m V_i$ are reduced? Certainly they are generically reduced.}

\begin{remark}
The cases $r = n$ and $r=1$ are easy: $V_n$ is smooth and the result is immediate, while $V_1$ is a normal hypersurface (hence a local complete intersection) with canonical singularities, so the theorem follows from Theorem~\ref{maddog}.  The case $r = n - 1$ is due to Musta{\c{t}}{\u{a}} \cite[Example 4.7]{MusJet}.
\end{remark}

Given a tuple of non-negative integers $\mm = (m_1,\ldots,m_n)$, recall that the partition $\lambda(\mm)=(\lambda_1,\ldots,\lambda_n)$ is defined by $\lambda_i = m_i+\cdots+m_n$ (see \S\S\ref{s:mult1}--\ref{s:mult2}).  Considering the $J_{\infty}G$-invariant cylinders 
\[
\Cont^{\lambda}(V_\bullet) = \Cont^{\lambda(\mm)}(V_\bullet) := \bigcap_{i = 1}^n \Cont^{\lambda_i} (V_i)  \subseteq J_{\infty} M_{n,n}
\]
and
\[
\Cont^{\ge \lambda}(V_\bullet) = \Cont^{\ge \lambda(\mm)}(V_\bullet) := \bigcap_{i = 1}^n \Cont^{\ge \lambda_i} (V_i)  \subseteq J_{\infty} M_{n,n},
\]
observe that
\[
J_\infty M_{n,n}  \setminus J_\infty V_1 = \coprod_{\lambda} \Cont^{\lambda} (V_\bullet),
\]
where $\lambda$ varies over all partitions of length at most $n$.

\begin{lemma}\label{l:gl_contact}
The contact locus $\Cont^\lambda(V_\bullet)$ is an affine family of orbits.
\end{lemma}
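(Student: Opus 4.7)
The plan is to exhibit $\Cont^\lambda(V_\bullet) = \psi_m^{-1}(S_m)$ for some $m$ sufficiently large, where $S_m \subseteq J_m M_{n,n}$ is an affine family of $J_m G$-orbits over an affine space $\A^k$. Since each $V_i$ is cut out by polynomials involving only the first $n+1-i$ columns of the matrix, the contact condition $\ord_\gamma(V_i) = \lambda_i$ depends only on the truncated jet $\psi_m(\gamma)$ whenever $m \ge \lambda_1$; this realizes the contact locus as a cylinder over a locally closed subvariety $S_m \subseteq J_m M_{n,n} \setminus \bigcup_i J_m V_i$, reducing the problem to finite level.

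Set $\mu_i := m_{n+1-i}$, so that $\lambda_{n+1-i} = \mu_1 + \cdots + \mu_i$, and let $k := \sum_{1 \le i < j \le n} \mu_j = \sum_{\ell=1}^n (n-\ell)\, m_\ell$. I would construct an explicit section $s\colon \A^k \to S_m$ by sending $c = (c_{ij})_{i<j}$, with each $c_{ij} \in \C[t]$ of degree less than $\mu_j$, to the $m$-jet of the upper triangular matrix $T(c)$ whose $(i,i)$-entry is $t^{\mu_i}$ and whose $(i,j)$-entry is $c_{ij}$ for $i<j$. A direct minor computation shows that the only nonzero $r \times r$ minor of the first $r$ columns of $T(c)$ is $\pm t^{\mu_1+\cdots+\mu_r} = \pm t^{\lambda_{n+1-r}}$, confirming that $\ord_{s(c)}(V_{n+1-r}) = \lambda_{n+1-r}$ and hence $s(c) \in S_m$.

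The core step is to show that the action map $\phi\colon J_m G \times \A^k \to S_m$, $(g,c) \mapsto g \cdot s(c)$, is smooth and surjective. Surjectivity will come from an inductive Hermite-style column reduction: the condition $\ord_\gamma(V_n) = \mu_1$ forces the first column of any $\gamma \in S_m$ to have content $t^{\mu_1}$, so left row operations normalize it to $(t^{\mu_1}, 0, \ldots, 0)^T$; the condition on $V_{n-1}$ then forces the remaining rows of the second column to have content $t^{\mu_2}$, normalized to $(*, t^{\mu_2}, 0, \ldots, 0)^T$, and one continues by induction to reach upper-triangular form; a final ``upward'' step reduces each $(i,j)$-entry modulo $t^{\mu_j}$ using row~$j$. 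Smoothness of $\phi$ will then follow from the dimension count $\dim J_m G + k - \dim S_m = (m+1)n^2 + \sum_\ell (n-\ell) m_\ell - \bigl((m+1)n^2 - \sum_i i\, m_i\bigr) = n\lambda_1$, matching the expected stabilizer dimension. The stabilizers are moreover unipotent, hence contractible, by Proposition~\ref{p:boundary}, since $GL_n$ acts freely on $M_{n,n} \setminus V_1 = GL_n$ by left multiplication.

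The main obstacle will be to realize the Hermite reduction as a morphism of schemes rather than a pointwise procedure: the pivot selection in each column is only algebraic on the open stratum where a chosen matrix entry achieves the minimum $t$-valuation, so $S_m$ must be covered by finitely many such opens and the resulting normal forms checked to agree after patching, in order that $\phi$ be globally defined and smooth and thus witness the affine-family-of-orbits structure required by Definition~\ref{d:contractible}.
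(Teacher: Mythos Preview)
Your proposal is correct and takes essentially the same approach as the paper: the same upper-triangular section space $L_m$ (with $(i,i)$-entry $t^{\mu_i}$ and $(i,j)$-entry of degree $<\mu_j$), the same Hermite-style row reduction carried out on a permutation-indexed open cover to produce the projection $p\colon C_m \to L_m$, and the same identification of patching these local reductions into a global morphism as the main technical point. The one minor difference is that the paper deduces smoothness of both $p$ and the action map from the identification $C_m = (J_mG \times L_m)/\mathcal{H}$ as a quotient by a flat subgroup scheme, rather than from a fiber-dimension count; your count is consistent but would need to be supplemented by such a quotient argument (or a flatness check) to actually conclude smoothness.
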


\begin{proof}
Identify $J_\infty M_{n,n}$ with $n \times n$ matrices whose entries are power series in 
$\C[[t]]$, and set $m_i = \lambda_i - \lambda_{i + 1}$ for $1 \le i \le n$, so that 
$\lambda = \lambda(\mm)$.  For brevity, we will use the notation
\[
  C = \Cont^\lambda(V_\bullet) \quad \text{ and } C_m = \Cont^\lambda(V_\bullet)_m 
\]
in this proof.

Let $L \subseteq C$ be the set of $n\times n$ %lower
upper triangular matrices with 
$(i,i)^{\textrm{th}}$ entry equal to $t^{m_{n + 1 - i}}$ and $(i,j)^{\textrm{th}}$ entry equal to 
a polynomial in $t$ of degree strictly less than $m_{n + 1 - j}$ for $i < j$; this is an affine space $\A^N$, for $N = n(m_1+\cdots + m_n) = n\lambda_1$.  Let $L_m \subseteq C_m \subseteq J_m M_{n,n}$ be defined similarly.  Take $m>\lambda_1$, so that $L_m \isom L \isom \A^N$ and $L_m$ is not contained in $J_m V_1$.

Using row operations, one sees that every $J_m G$-orbit in $C_m$ has a unique representative in $L_m$. 
%Moreover, each $J_m G$-orbit intersects $L_m$ in exactly one point.  
%(Equivalently, for $x\in L_m$ and $g\in J_m G$, we have $g\cdot x \in L_m$ iff $g\cdot x = x$.  This holds whenever $m>\max\{m_i + m_j\}$, and in particular for $m>\lambda_1$.)
%Next 
We claim that the map $p\colon C_m \to L_m$ given by
\[
  p(x) = (J_m G \cdot x) \cap L_m
\]
is a smooth, algebraic morphism of varieties.  To see this, consider $x$ as a matrix, and assume it lies in the open subset $U \subseteq C_m$ where the %bottom-right 
top-left minor of size $i$ has order $m_{n+1-i}$ (in $t$).  (By definition, $C_m$ is covered by $n!$ such open sets $U_w$, one for each permutation, since some minor on the first $i$ columns has order $m_{n+1-i}$.)  Thus the entry in position $(1,1)$ has the form $x_{1,1} = t^{m_n}\cdot q(t)$, where $q(t)$ is an invertible element of $\C[t]/(t^{m+1})$.  Scale the $n^\textrm{th}$ row by $q(t)^{-1}$, and use row operations to set the entries below $x_{1,1}$ to zero.  Note that the entries of the resulting matrix $x'$ are rational functions of the coordinates of $x$.  Repeat this process for $x'$, starting with $x'_{2,2}$, with the additional step of using row operations to ensure the entry $x'_{1,2}$ is a polynomial of degree strictly less than $m_{n-1}$.  Continuing in this way, one obtains a matrix in $L_m$ whose entries are rational functions of the coordinates of $x$; that is, we have described a morphism $U\to L_m$.  Here is an example, for $n=2$, $\lambda=(2,1)$, and $m=3$:
\begin{multline*}
x=\left[\begin{array}{ll} t + t^2 & 1 + 2t \\ t & 1 +t^2\end{array}\right]
\leadsto
\left[\begin{array}{ll} t & (1 + 2t)(1 - t + t^2) \\ t & 1 + t^2 \end{array}\right] 
\leadsto
\left[\begin{array}{ll} t & 1  + t - t^2 + 2t^3 \\ 0 & - t + 2t^2 - 2t^3 \end{array}\right] \\
\leadsto
\left[\begin{array}{ll} t & 1  + t - t^2 + 2t^3 \\ 0 &  t \end{array}\right]
\leadsto
\left[\begin{array}{ll}t & 1 \\0 & t\end{array}\right]=p(x).
\end{multline*}

The map is defined similarly on the other open sets $U_w$, by composing with an appropriate permutation of the rows.  Since $p(x)$ is the unique element of $L_m$ in the orbit $J_m G\cdot x$, it follows that these maps patch to give a morphism $C_m \to L_m$.  (In fact, we have described morphisms $s_w\colon U_w \to J_m G$, with $s_w(x)\cdot x = p(x)$.  These maps to $J_m G$ do not glue, however---only the composition with the action map is well defined on the overlaps of the $U_w$'s.)

Finally, consider $\GG = J_m G \times L_m$ as a group scheme over $L_m$, and let $\HH \subseteq \GG$ be the flat subgroup scheme defined by $\HH=\{(g,x) \,|\, g\cdot x = x \}$.  Since the quotient $\GG/\HH = C_m$ exists as a scheme (in fact, a variety), general facts about quotients imply that the maps $\GG \to C_m$ and $C_m \to L_m$ are smooth (see, e.g., \cite[\S I.5]{jantzen}).  The lemma follows.
\end{proof}

Our geometric description of $\Lambda_G$ now follows immediately from Theorem~\ref{t:basis}: 

\begin{corollary}\label{c:glnbasis}
With the notation above, the classes 
$[\overline{\Cont^{\lambda} (V_\bullet)}]$ form a $\Z$-basis of $\Lambda_G$, as $\lambda$ varies over all partitions of length at most $n$.
\end{corollary}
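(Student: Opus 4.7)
The plan is to apply Theorem~\ref{t:basis} with $X = M_{n,n}$, $G = GL_n$, and $D = V_1$. Connectedness of $G$ is immediate, and because $GL_n$ acts freely on $X \setminus D = GL_n$ by left translation, the stabilizer hypothesis is trivially satisfied. Since $M_{n,n}$ is contractible, Lemma~\ref{l:homotopy} identifies $H_G^* M_{n,n}$ with $\Lambda_G$, so the task reduces to verifying that the decomposition $J_\infty M_{n,n} \setminus J_\infty V_1 = \coprod_\lambda \Cont^\lambda(V_\bullet)$ is an equivariant affine paving in the sense of Definition~\ref{d:paving}.

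Any arc outside $J_\infty V_1$ has finite contact order $\lambda_i$ with each $V_i$, and the chain $V_n \subseteq \cdots \subseteq V_1$ forces $\lambda_n \leq \cdots \leq \lambda_1$, so $\lambda$ is indeed a partition of length at most $n$. By Lemma~\ref{l:gl_contact}, each $\Cont^\lambda(V_\bullet)$ is an affine family of $J_\infty G$-orbits, so the only remaining task is to organize the strata into a filtration by $J_\infty G$-invariant closed cylinders. I will fix a total order $\preceq$ on such partitions that refines the component-wise partial order and has finite initial segments (for example, order by $|\lambda|$ and break ties lexicographically), enumerate them as $\lambda^{(0)} \prec \lambda^{(1)} \prec \cdots$, and set
\[
  Z_j := J_\infty V_1 \cup \bigsqcup_{k \geq j} \Cont^{\lambda^{(k)}}(V_\bullet),
\]
so that $Z_0 = J_\infty M_{n,n}$, $Z_{j+1} \subseteq Z_j$, $Z_j \supseteq J_\infty V_1$, and $Z_j \setminus Z_{j+1} = \Cont^{\lambda^{(j)}}(V_\bullet)$.

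The main obstacle is checking that each $Z_j$ is a closed cylinder. I plan to choose $m_j$ strictly larger than $\max_{l < j} \lambda^{(l)}_1$; then any arc $\gamma$ with $\ord_\gamma(V_1) > m_j$ automatically lies in $Z_j$, from which a direct check gives $Z_j = \psi_{m_j}^{-1}(\psi_{m_j}(Z_j))$. The image
\[
  \psi_{m_j}(Z_j) = J_{m_j} V_1 \cup \bigcup_{k \geq j,\; \lambda^{(k)}_1 \leq m_j} \psi_{m_j}(\Cont^{\lambda^{(k)}}(V_\bullet))
\]
is a finite union whose complement in $J_{m_j} M_{n,n}$ is $\bigsqcup_{l < j} \psi_{m_j}(\Cont^{\lambda^{(l)}}(V_\bullet))$; the key closedness input is that any specialization to a jet $\xi \in \psi_{m_j}(\Cont^{\lambda^{(l)}})$ ($l < j$) coming from $\psi_{m_j}(\Cont^{\lambda^{(k)}})$ ($k \geq j$) would, via the inclusion $\overline{\psi_{m_j}(\Cont^{\lambda^{(k)}})} \subseteq \psi_{m_j}(\Cont^{\geq \lambda^{(k)}}(V_\bullet))$ together with the partition inequalities $\lambda^{(l)}_i \leq \lambda^{(l)}_1 \leq m_j$, force $\lambda^{(l)} \geq \lambda^{(k)}$ componentwise and hence $l \geq k$, contradicting $l < j \leq k$. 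Once each $Z_j$ is known to be a closed $J_\infty G$-invariant cylinder containing $J_\infty V_1$, Lemma~\ref{l:gl_contact} identifies each $U_j = Z_j \setminus Z_{j+1} = \Cont^{\lambda^{(j)}}(V_\bullet)$ as an affine family of orbits, and Theorem~\ref{t:basis} produces the claimed $\Z$-basis $\{[\overline{\Cont^\lambda(V_\bullet)}]\}$ of $\Lambda_G$.
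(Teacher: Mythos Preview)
Your proposal is correct and follows essentially the same approach as the paper: apply Theorem~\ref{t:basis} with $X=M_{n,n}$, $G=GL_n$, $D=V_1$, using Lemma~\ref{l:gl_contact} to identify each $\Cont^\lambda(V_\bullet)$ as an affine family of orbits. The paper treats the corollary as an immediate consequence and does not spell out the filtration, whereas you carefully construct the chain $\{Z_j\}$ and verify that each $Z_j$ is a closed $J_\infty G$-invariant cylinder; your argument for closedness (finite union at level $m_j$, plus $\overline{\psi_{m_j}(\Cont^{\lambda^{(k)}})}\subseteq \Cont^{\ge\lambda^{(k)}}(V_\bullet)_{m_j}$ forcing $\lambda^{(l)}\geq\lambda^{(k)}$ componentwise) is sound and fills in exactly the detail the paper leaves implicit.
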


%\begin{proof}
%Identify $J_\infty M_{n,n}$ with $n \times n$ matrices whose entries are given by power series in $\C[[t]]$, and set $m_i = \lambda_i - \lambda_{i + 1}$ for $1 \le i \le n$, %and $m_n = \lambda_n$ 
%so that $\lambda = \lambda(\mm)$.  Representatives for the $J_\infty G$-orbits of 
%$\Cont^{\lambda} (V_\bullet)$ are given by all $n \times n$ lower triangular matrices with 
%$(i,i)^{\textrm{th}}$ entry equal to $t^{m_{n + 1 - i}}$ and $(i,j)^{\textrm{th}}$ entry equal to 
%a polynomial in $t$ of degree strictly less than $m_{n + 1 - j}$ for $i > j$; this is an affine space $\A^N$, for an appropriate $N$.  We deduce that
%$\Cont^{\lambda}(V_\bullet)$ 
%is a family of $J_\infty G$-orbits parametrized by an affine space. It easily follows 
%that $J_\infty M_{n,n} \setminus J_\infty V_1 = \coprod_{\lambda} \Cont^{\lambda}(V_\bullet)$ is 
%an equivariant affine paving, and Theorem~\ref{t:basis} then gives the result. 
%\end{proof}

Recall from Example~\ref{ex:gln} that $\Lambda_G  = \Z[c_1, \ldots, c_n]$, where $c_i$ 
is the $i^{\textrm{th}}$ equivariant Chern class of the standard representation 
of $G=GL_n$.  Given a partition $\mu = (\mu_1\geq \cdots \geq \mu_p \geq 0)$ with $\mu_1\leq n$, we also write $c_\mu = c_{\mu_1}\cdot c_{\mu_2} \cdots c_{\mu_p}$.  We offer the following conjecture.

\begin{conjecture}\label{c:gln}
Let $\mm = (m_1,\ldots,m_n)$ be a tuple of non-negative integers, and set $\lambda = \lambda(\mm)$.  Then
\begin{equation}\label{eq:c:gln}
 [\Cont^{\ge \lambda} (V_\bullet)]  = [\overline{\Cont^{\lambda} (V_\bullet)}] = c_1^{m_1} \cdots c_n^{m_n} = c_{\lambda'},
\end{equation}
where $\lambda'$ is the conjugate partition to $\lambda$.
\end{conjecture}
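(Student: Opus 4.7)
The plan is to reduce the conjecture to two statements: (1) $[V_r]=c_r$ in $\Lambda_G$ for each $r$, and (2) the product $[V_1]^{m_1}\cdots[V_n]^{m_n}$ is represented by the closed contact locus $\overline{\Cont^\lambda(V_\bullet)}$. Statement (1) I would prove by the equivariant Thom-Porteous formula applied to the morphism of trivial bundles $\underline{\C^{n+1-r}}\to\underline{\C^n}$ on $M_{n,n}$, where only the target carries the standard $GL_n$-action. Since $V_r$ is exactly the locus where this morphism drops rank by one, Thom-Porteous yields $[V_r]=c_r(\underline{\C^n}-\underline{\C^{n+1-r}})=c_r$. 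Once (2) is known, the conjectural identity $[\overline{\Cont^\lambda(V_\bullet)}]=c_{\lambda'}$ is immediate.

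Next I would establish that $\codim\Cont^{\geq\lambda}(V_\bullet)=\sum_i i\,m_i$ and that the closed cylinder $\Cont^{\geq\lambda}(V_\bullet)$ equals $\overline{\Cont^\lambda(V_\bullet)}$. The irreducibility result of Ko\v sir and Sethuraman (Theorem~\ref{t:determinantal}) together with Remark~\ref{pure-dim} forces $\dim J_m V_r=(n^2-r)(m+1)$, i.e., $J_m V_r$ is of expected codimension $r(m+1)$. The desired codimension of the multi-contact locus I would then read off from the explicit cross-section $L\subseteq\Cont^\lambda(V_\bullet)$ constructed in the proof of Lemma~\ref{l:gl_contact}: its dimension is $n\lambda_1=n\sum m_i$, and comparing with $\dim J_m M_{n,n}=n^2(m+1)$ for $m\geq\lambda_1-1$ yields the required codimension $\sum_i i\,m_i$. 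The equality $[\overline{\Cont^\lambda(V_\bullet)}]=[\Cont^{\geq\lambda}(V_\bullet)]$ would then follow from Remark~\ref{r:codimension-closure} (applied in the generality allowed by Theorem~\ref{t:determinantal}) together with the orbit structure from Lemma~\ref{l:gl_contact}.

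For the multiplication step, since the $V_r$ are not e.l.c.i.\ for $1<r<n$ and have large singular loci, neither Theorem~\ref{t:multiplication} nor Theorem~\ref{t:smooth} applies directly. I propose to reduce to a case where they do, by constructing a $G$-equivariant proper birational resolution $\pi\colon Y\to M_{n,n}$, with
\[
  Y=\{(A,F_\bullet)\in M_{n,n}\times Fl(\C^n)\mid \text{col}_k(A)\in F_k\text{ for }k=1,\dots,n-1\}.
\]
This $Y$ is a vector bundle over the full flag variety, hence smooth, and the natural divisors $E_r=\{(A,F_\bullet)\mid\text{col}_{n+1-r}(A)\in F_{n-r}\}$ are $G$-invariant, form a simple normal crossings configuration on $Y$, and satisfy $\pi_*[E_r]=[V_r]$. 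On $Y$, Example~\ref{e:ncd} together with the transversality of the $E_r$'s gives $[\overline{\Cont^{\geq\mm}(E_\bullet)}]=\prod[E_r]^{m_r}$ for all tuples $\mm$. The goal is then to show, using Corollary~\ref{c:birational}, that the paving of $J_\infty M_{n,n}\setminus J_\infty V_1$ by contact loci $\Cont^\lambda(V_\bullet)$ lifts to a compatible paving on $Y$ whose cells match the normal-crossings contact cells up to the shift by $K_{Y/M_{n,n}}$, and to push forward the identity from $Y$ to $M_{n,n}$.

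The hard part will be the precise bookkeeping in this pushforward: one must check that each generic arc in $\Cont^\lambda(V_\bullet)$ lifts uniquely (away from exceptional loci) to an arc in a specific cell $\Cont^{\geq\mm(\lambda)}(E_\bullet)$ for a tuple $\mm(\lambda)$ depending canonically on $\lambda$, and then verify that the relative canonical discrepancy $\ord_{f_\infty^{-1}(U_\lambda)}K_{Y/M_{n,n}}$ balances the codimension shift so that $\pi_*[\overline{\Cont^{\geq\mm(\lambda)}(E_\bullet)}]=[\overline{\Cont^\lambda(V_\bullet)}]$ holds in $\Lambda_G$. Equivalently, the two natural $\Z$-bases of $\Lambda_G$ (the contact-locus basis from Corollary~\ref{c:glnbasis} and the monomial basis $\{c_{\lambda'}\}$) should be related by a lower-triangular matrix (with respect to the dominance order on $\lambda$) with $1$'s on the diagonal, a uni-triangularity property whose verification I expect to be the central obstacle. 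A positivity or Gr\"obner-degeneration argument on the equations of $\Cont^{\geq\lambda}(V_\bullet)$ in $J_m M_{n,n}$ (extending the hypersurface deformation in Lemma~\ref{l:hypersurface}) may provide an alternate, more direct route.
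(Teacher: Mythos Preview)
This statement is a \emph{conjecture} in the paper, not a theorem; the authors do not claim a proof.  What they do prove is collected in Remark~\ref{r:known}: the identity $[V_r]=c_r$ (your step (1)), the cases $r=1$ and $r=n$ for all $m$, the case $m=1$ for all $r$, the case $n=2$, and a few computer checks.  So the relevant comparison is between your proposed route and the obstacles the paper identifies.

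Your resolution $Y\to M_{n,n}$ is precisely the full–flag case of the construction the paper carries out in \S\ref{s:gln} (pp.~preceding Corollary~\ref{c:flag} and Example~\ref{e:fullflag}): $Y=G\times^B\lieb$, with the $E_i$ the components of $\phi^{-1}(V_1)$.  The paper already shows (Example~\ref{e:fullflag}) that on $Y$ one has $[\overline{\Cont^{\mm}(E_\bullet)}]=t_n^{m_1}\cdots t_1^{m_n}$, exactly as you propose via Example~\ref{e:ncd}.  The paper also proves (Corollary~\ref{c:birational}) that this gives a bijection of $\Z$-bases between $\Lambda_G$ and $\Lambda_B$, sending $[\overline{\Cont^\lambda(V_\bullet)}]$ to $[\overline{\Cont^{\tilde\lambda}(\tilde V_\bullet)}]$.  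But this bijection is, as the paper stresses in \eqref{eq:g-p}, only a \emph{group} isomorphism.  Knowing the image of each basis element on the $Y$-side does not tell you what the preimage is as a polynomial in $c_1,\dots,c_n$, because the bijection is not the ring map $\pi^*$.  Your ``uni-triangularity'' obstacle is therefore not a bookkeeping detail but is equivalent to the conjecture itself; the paper raises exactly this issue as the open Question following Corollary~\ref{c:birational}.

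There is also a concrete error in your setup: the claim $\pi_*[E_r]=[V_r]$ is false for $r>1$.  Each $E_r$ is a divisor in $Y$ (codimension $1$), while $V_r$ has codimension $r$ in $M_{n,n}$; since $\pi$ is birational, proper pushforward preserves dimension, so $\pi_*[E_r]$ lands in $H_G^2(M_{n,n})$, not $H_G^{2r}$.  In fact for $r>1$ the divisor $E_r$ is $\pi$-exceptional and $\pi_*[E_r]=0$.  Consequently, even granting the SNC computation on $Y$, there is no pushforward identity of the form $\pi_*\big(\prod[E_r]^{m_r}\big)=\prod[V_r]^{m_r}$ to exploit.  Your alternate suggestion, a direct Gr\"obner/degeneration argument extending Lemma~\ref{l:hypersurface}, is plausible in spirit but runs into the fact that for $1<r<n$ the $V_r$ are not e.l.c.i., which is precisely why Theorems~\ref{t:multiplication} and~\ref{t:smooth} fail here and why the problem remains open.
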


\begin{remark}\label{r:degrees}
It follows from Lemma~\ref{l:gl_contact} that $\Cont^{\lambda} (V_\bullet)$ is a smooth cylinder of codimension $|\lambda|:= \sum \lambda_i$, and hence $\Cont^{\ge \lambda} (V_\bullet)$ and $\overline{\Cont^{\lambda} (V_\bullet)}$ are closed cylinders of codimension $|\lambda|$. 
In particular, the classes in Conjecture~\ref{c:gln} all have the correct degree. 
\end{remark}

\begin{remark}
If $\lambda_1 = \cdots = \lambda_r = m + 1$ and $\lambda_{r + 1} = \cdots = \lambda_n = 0$, then, using Theorem~\ref{t:determinantal},
$\Cont^{\ge \lambda} (V_\bullet)  = \overline{\Cont^{\lambda} (V_\bullet)} =  \psi_{m}^{-1}(J_m V_r)$. 
\end{remark}

\begin{remark}\label{r:known}
We can establish Conjecture~\ref{c:gln} in several cases:
\begin{enumerate}
\item The fact that $[V_r] = c_r$ 
is well known; for example, it follows from the Giambelli-Thom-Porteous formula for 
cohomology classes of degeneracy loci \cite[\S14]{FulIntersection}.

\item Since $V_1$ is a normal e.l.c.i. with rational singularities, it follows from Corollary~\ref{c:affine-CI} that $[V_1]^{m + 1} = [J_m V_1] = c_1^{m + 1}$.

\item Since $V_n$ is smooth, Corollary~\ref{c:mult} says $[J_m V_n] = [V_n]^{m + 1} = c_n^{m + 1}$.  (This is also easy to see directly.) \label{r:Vn}

\item For $m = 1$ and any $r$, Corollary~\ref{c:mult} implies $[J_1 V_r] = [V_r]^2$.  Indeed, for $1\leq r < n$, the singular locus of $V_r$ has codimension $2(r+1)$, so the hypothesis of Corollary~\ref{c:mult} is satisfied when $(m-1)r < 2$.

\item When $n = 2$, the conjecture follows from Theorem~\ref{t:multiplication}, Remark~\ref{r:codimension-closure} and Theorem~\ref{maddog}. 

\item When $n = 3$, we have verified that $[J_m V_2]  = [V_2]^{m + 1} = c_2^{m + 1}$ for $m \leq 5$ using Macaulay 2. 
\end{enumerate}
\end{remark}

Now we use Corollary~\ref{c:birational} to relate the discussion above with partial flag varieties.  
Fix integers $0 = r_0 < r_1 < r_2 <\cdots < r_k < r_{k + 1} = n$, and consider the partial flag variety
\[
  \Fl(\rr) = \Fl(r_1,\ldots,r_k;n) = \{ (V_{r_1} \subseteq \cdots \subseteq V_{r_k} \subseteq \C^n) \mid \dim V_{r_i} = r_i \}.
\]
Let $F_\bullet \in \Fl(\rr)$ be the standard (partial) flag, and let $P$ be the parabolic subgroup of $G$ which fixes $F_\bullet$.  That is, $P$ is the group of invertible block upper-triangular matrices, with diagonal blocks of sizes $r_1, r_2-r_1, \ldots, r_k-r_{k-1}, n-r_k$:

\[
       \begin{array}{r|c|c|c|c|} 
    \multicolumn{1}{c}{}    &  \multicolumn{1}{c}{\overbrace{}^{r_1}} & \multicolumn{1}{c}{\overbrace{}^{r_2-r_1}} &   \multicolumn{1}{c}{}     & \multicolumn{1}{c}{\overbrace{}^{n-r_k}}  \\ 
        \cline{2-5}
 r_1\big\{      &             *        &   *                   & \cdots & *    \\ \cline{2-5}
 r_2-r_1\big\{  &             0        &   *                   & \cdots & *    \\ \cline{2-5}
                &      \vdots          &  \vdots               & \ddots &\vdots\\ \cline{2-5} 
  n-r_k \big\{  &             0        &   0                   & \cdots & *    \\ \cline{2-5}
           \end{array} 
\]
Let $\liep$ be the Lie algebra of $P$; it consists of all matrices with the same block form as $P$.  Note that $P$ acts on $\liep$ by left matrix multiplication, and that $\Fl(\rr)$ is naturally identified with $G/P$.
Consider
\[
  Y = G \times^P \liep,
\]
the quotient of $G\times\liep$ by the relation $(g\cdot p,x)\sim (g,p\cdot x)$ for $p\in P$.  This comes with a $G$-equivariant map $\phi\colon Y \to M_{n,n}$, induced by the multiplication map $G \times \liep \to M_{n,n}$ sending $(g,x)$ to $g\cdot x$.  It is also a vector bundle over $G/P=\Fl(\rr)$ via the first projection, and hence  $H^*_G Y \cong H^*_G \Fl(\rr)$ by Lemma~\ref{l:homotopy}. 
Moreover, 
we have an identification
\[
  Y = S_{r_1}^{\oplus r_1} \oplus S_{r_2}^{\oplus r_2-r_1} \oplus \cdots \oplus S_n^{\oplus n-r_k} \subseteq S_n^{\oplus n} \isom \Fl(\rr)\times M_{n,n},
\]
where $S_r$ is the tautological rank $r$ bundle on $\Fl(\rr)$.  (So $S_n = \C^n$ is the trivial bundle.)    From this perspective, the map $\phi$ is simply projection on the second factor; in particular, $\phi$ is proper.

Recall that $V_{n + 1 - r} \subseteq M_{n,n}$ is the locus of matrices where the first $r$ columns have rank strictly less than $r$.  One sees that $\phi$ is an isomorphism over the open set $M_{n,n}\setminus V_{n + 1 - r_k}$.  Moreover, $E=\phi^{-1}(V_{n + 1 - r_k})$ is a reduced divisor with $k$ irreducible components $E_{n + 1 - r_1},\ldots, E_{n + 1 - r_k}$.  (To see this, lift $\phi$ to the multiplication map $\tilde\phi\colon G\times\liep \to M_{n,n}$, and observe that $\tilde\phi^{-1}(V_{n + 1 - r_k})$ is defined by the vanishing of the principal $r_k\times r_k$ minor in $\liep$.  This determinant factors into $k$ block determinants, of sizes $r_1, r_2-r_1,\ldots r_k-r_{k-1}$.)  \label{page:lift-mult}
In fact, for $1 \le i \le k$, $\phi^{-1}(V_{n + 1 - r_i}) = E_{n + 1 - r_1} +  \cdots +  E_{n + 1 - r_i}$. 

To apply Corollary~\ref{c:birational}, we compute $K_{Y/M_{n,n}}$.  This is equivalent to $K_Y$, since $K_{M_{n,n}}=0$.  In fact, we have
\[
  K_{Y/M_{n,n}} = K_Y = \sum_{i=1}^k (n-r_i) E_{n + 1 - r_i}.
\]
We leave the details of this calculation to the reader; it can be done by considering the vector bundle projection $Y \to \Fl(\rr)$, and using standard formulas for $K_{\Fl(\rr)}$ and the relative canonical divisor of a vector bundle.

For $r$ not among the $r_i$'s, let $\tilde{V}_{n + 1 - r}$ be the ``proper transform'' of $V_{n + 1 - r}$, that is, the closure of $\phi^{-1}(V_{n + 1 - r} \setminus V_{n + 1 - (r - 1))})$; let $\tilde{V}_{n + 1 - r_i} = E_{n + 1 - r_i}$.  If $r_{i-1}<r<r_i$, then $\tilde{V}_{n + 1 - r} \subseteq  E_{n + 1 - r_i}$ and $\phi^{-1}(V_{n + 1 - r}) = \tilde{V}_{n + 1 - r} + E_{n + 1 - r_1} +  \cdots +  E_{n + 1 - r_{i - 1}}$.

Given a partition $\lambda = (\lambda_1 \geq \cdots \geq \lambda_n)$ of length at most $n$, we define a new partition $\tilde{\lambda} = (\tilde\lambda_1, \ldots, \tilde\lambda_n )$ by 
\begin{align*}
  \tilde{\lambda}_{n + 1 - r} &= \begin{cases} \lambda_{n + 1 - r} - \lambda_{n + 1 - r_{i - 1}} &\text{ for } r_{i - 1} < r \le r_i, \\
   \lambda_{n + 1 - r} &\text{ for }r \le r_1. \end{cases}
\end{align*} 
Alternatively, if $\mu$ is the subpartition of $\lambda$ given by 
\[
  \underbrace{\lambda_{n + 1 - r_k} \ge \cdots \ge \lambda_{n + 1 - r_k}}_{n - r_k \textrm{ times}}
\ge \underbrace{\lambda_{n + 1 - r_{k - 1}} \ge \cdots \ge \lambda_{n + 1 - r_{k  - 1}}}_{r_k - r_{k - 1} \textrm{ times}}
\ge \cdots \ge  \underbrace{\lambda_{n + 1 - r_{1}} \ge \cdots \ge \lambda_{n + 1 - r_{1}}}_{r_2 - r_{1} \textrm{ times}},
\]
then $\tilde{\lambda} = \lambda - \mu$. 

Observing that $\phi_{\infty}^{-1} (\Cont^{\lambda}(V_\bullet)) = \Cont^{\tilde{\lambda}} (\tilde{V}_\bullet) \subseteq  \Cont^{e(\lambda)}(K_{Y/M_{n,n}})$, where
\[
  e(\lambda) = \sum_{i = 1}^k (n - r_i)\tilde{\lambda}_{n + 1 - r_i} =  \sum_{i = 1}^k (n - r_i)(\lambda_{n + 1 - r_i} - \lambda_{n + 1 - r_{i - 1}}) = \sum_{i = 1}^n \mu_i ,
\]
we have the following application of Corollary~\ref{c:birational} and Remark~\ref{r:degrees}.

\begin{corollary}\label{c:flag}
With the notation above, the classes 
$[ \overline{\Cont^{\tilde{\lambda}} (\tilde{V}_\bullet)}]$ form a $\Z$-basis of 
$H^*_G \Fl(\rr)$, as $\lambda$ varies over all partitions of length at most $n$.  
Moreover, the degree of $[\overline{\Cont^{\tilde{\lambda}} (\tilde{V}_\bullet)}]$ 
in $H^*_G \Fl(\rr)$ is $|\tilde\lambda|:=\sum_i \tilde{\lambda}_i$. 
\end{corollary}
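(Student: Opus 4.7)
The plan is to apply Corollary~\ref{c:birational} to the proper equivariant birational map $\phi\colon Y\to M_{n,n}$ set up above, using the equivariant affine paving of $J_\infty M_{n,n}\setminus J_\infty V_1$ supplied by Corollary~\ref{c:glnbasis}. The first step is to check the hypotheses of Corollary~\ref{c:glnbasis} in a form suitable for transfer: $G=GL_n$ acts on $M_{n,n}\setminus V_1$ (matrices whose first column is nonzero) with unipotent stabilizers, and Lemma~\ref{l:gl_contact} gives that each $\Cont^\lambda(V_\bullet)$ is an affine family of $J_\infty G$-orbits; ordering the cylinders by codimension produces the required filtration. Lemma~\ref{l:homotopy} also gives $H_G^\ast Y\cong H_G^\ast\Fl(\rr)$ via the vector bundle projection $Y\to\Fl(\rr)$, so it suffices to compute $H_G^\ast Y$.

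Next I would verify compatibility of the paving with $\phi$. Since $\phi$ is an isomorphism away from $E=E_{n+1-r_1}\cup\cdots\cup E_{n+1-r_k}$, and since
\[
  \phi^{-1}(V_{n+1-r})=\tilde V_{n+1-r}+E_{n+1-r_1}+\cdots+E_{n+1-r_{i-1}}\quad\text{for }r_{i-1}<r\le r_i,
\]
the inverse image $\phi_\infty^{-1}(\Cont^\lambda(V_\bullet))$ is exactly $\Cont^{\tilde\lambda}(\tilde V_\bullet)$, where $\tilde\lambda=\lambda-\mu$ with $\mu$ as in the definition above. Combining this with the formula $K_{Y/M_{n,n}}=\sum_{i=1}^k(n-r_i)E_{n+1-r_i}$, every arc in $\phi_\infty^{-1}(\Cont^\lambda(V_\bullet))$ meets $K_{Y/M_{n,n}}$ with contact order exactly
\[
  e(\lambda)=\sum_{i=1}^k(n-r_i)\tilde\lambda_{n+1-r_i}=\sum_{i=1}^n\mu_i,
\]
independent of the arc. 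Hence $\phi_\infty^{-1}(\Cont^\lambda(V_\bullet))\subseteq\Cont^{e(\lambda)}(K_{Y/M_{n,n}})$ and the paving is compatible with $\phi$.

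With compatibility in hand, Corollary~\ref{c:birational} immediately produces the $\Z$-basis
\[
  H_G^\ast Y=\bigoplus_\lambda\Z\cdot[\overline{\phi_\infty^{-1}(\Cont^\lambda(V_\bullet))}]=\bigoplus_\lambda\Z\cdot[\overline{\Cont^{\tilde\lambda}(\tilde V_\bullet)}],
\]
indexed by partitions $\lambda$ of length at most $n$. For the degree statement, Corollary~\ref{c:birational} also gives $\codim\Cont^\lambda(V_\bullet)=\codim\Cont^{\tilde\lambda}(\tilde V_\bullet)+e(\lambda)$; by Remark~\ref{r:degrees} the left-hand side equals $|\lambda|$, and $|\lambda|-e(\lambda)=|\lambda|-|\mu|=|\tilde\lambda|$, so $[\overline{\Cont^{\tilde\lambda}(\tilde V_\bullet)}]$ sits in degree $2|\tilde\lambda|$.

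The main obstacle is the bookkeeping in the middle paragraph: correctly identifying the scheme-theoretic pullbacks $\phi^{-1}(V_{n+1-r})$ in terms of the proper transforms and exceptional divisors, so that the contact orders translate as claimed. Once the decomposition $\phi^{-1}(V_{n+1-r})=\tilde V_{n+1-r}+E_{n+1-r_1}+\cdots+E_{n+1-r_{i-1}}$ is verified (which reduces, via the lift $\tilde\phi\colon G\times\liep\to M_{n,n}$, to factoring the principal $r_k\times r_k$ minor on $\liep$ into block determinants as in the discussion preceding the corollary), the contact-order accounting and the combinatorial identity $\tilde\lambda=\lambda-\mu$ collapse to the displayed formula for $e(\lambda)$, and the rest is a routine invocation of Corollary~\ref{c:birational}.
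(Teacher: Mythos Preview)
Your proposal is correct and follows essentially the same approach as the paper, which records the corollary as an immediate application of Corollary~\ref{c:birational} and Remark~\ref{r:degrees} after the observation $\phi_\infty^{-1}(\Cont^\lambda(V_\bullet)) = \Cont^{\tilde\lambda}(\tilde V_\bullet) \subseteq \Cont^{e(\lambda)}(K_{Y/M_{n,n}})$. One small slip: your parenthetical describing $M_{n,n}\setminus V_1$ as ``matrices whose first column is nonzero'' is wrong---that is $M_{n,n}\setminus V_n$; here $V_1$ is the locus of singular matrices, so $M_{n,n}\setminus V_1 = GL_n$, on which $G$ acts freely (hence with trivial, in particular unipotent, stabilizers).
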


Note that $H_G^*Y = H_G^*(G/P) = H_P^*(\pt) = \Lambda_P$ is isomorphic to the ring of ``multiply-symmetric functions'' %(\comment{what is the right name?})
\[
  \Lambda_P = \Z[t_1,\ldots,t_n]^{S_{d_0}\times \cdots \times S_{d_k}},
\]
where $d_i = r_{i + 1}-r_{i}$.  One may view this isomorphism as induced from the inclusion 
of $\liep$ into $G \times^P \liep$ sending $A$ to $(1,A)$, which is equivariant with respect 
to the inclusion $P \hookrightarrow G$.  The corollary therefore describes an isomorphism of groups
\begin{equation}\label{eq:g-p}
\begin{array}{rcl}
  \Lambda_G = \Z[t_1,\ldots,t_n]^{S_n} &\to& \Z[t_1,\ldots,t_n]^{S_{d_0}\times \cdots \times S_{d_k}} = \Lambda_P,  \vspace{.1in} \\
   {[\overline{\Cont^{\lambda}(V_\bullet)}]} & \mapsto & {[\overline{\Cont^{\tilde\lambda}(\tilde{V}_\bullet)}]} .
\end{array}
\end{equation}

For $r_i + 1 \le n + 1 - j \le r_{i + 1}$, let $c_{j, \rr} \in \Z[t_1,\ldots,t_n]^{S_{d_0}\times \cdots \times S_{d_k}}$ denote the $(r_{i + 1} - n + j)^{\textrm{th}}$ elementary symmetric 
function in the variables $t_{r_{i} + 1}, \ldots, t_{r_{i + 1}}$.  At the level of symmetric functions, there is an obvious group isomorphism $\Z[t_1,\ldots,t_n]^{S_n} \to \Z[t_1,\ldots,t_n]^{S_{d_0}\times \cdots \times S_{d_k}}$ defined by sending the monomial $c_1^{m_1}\cdots c_n^{m_n}$ to the monomial $c_{1,\rr}^{m_1} \cdots c_{n, \rr}^{m_{n}}$.  We conjecture that this is precisely the bijection defined geometrically in \eqref{eq:g-p}:

\begin{conjecture}\label{c: degeneration2}
If $\mm = (m_1,\ldots,m_n)$ is a tuple of non-negative integers and $\tilde{\lambda} = \tilde{\lambda}(\mm)$, then
\begin{equation}\label{eq:c2}
  [\Cont^{\ge \tilde{\lambda}} (\tilde{V}_\bullet)]  = [ \overline{\Cont^{\tilde{\lambda}} (\tilde{V}_\bullet)}] =  c_{1,\rr}^{m_1} \cdots c_{n, \rr}^{m_{n}}. 
\end{equation}
\end{conjecture}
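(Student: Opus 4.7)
The plan is to identify each individual class $[\tilde V_{s}] \in \Lambda_P = H^*_G Y$ with $c_{s,\rr}$ and then to compute the multi-contact class $[\overline{\Cont^{\tilde\lambda}(\tilde V_\bullet)}]$ as a block-wise product using Theorem~\ref{t:multiplication}. For the identification step, when $s = n+1-r_i$ for some $i$, the subvariety $\tilde V_s = E_s$ is the divisor on $Y = G \times^P \liep$ cut out by the $i$-th diagonal block determinant of $x \in \liep$, which is a $P$-semi-invariant whose character $t_{r_{i-1}+1} + \cdots + t_{r_i}$ equals $c_{s,\rr}$ by definition. When $s = n+1-r$ with $r_{i-1} < r < r_i$, $\tilde V_s$ is the locus in $Y$ where the tautological bundle map $\mathcal{O}^{r-r_{i-1}} \to Q_{i-1} = S_{r_i}/S_{r_{i-1}}$, given by the projection to $Q_{i-1}$ of the columns $r_{i-1}+1, \ldots, r$ of $y$, has rank less than $r-r_{i-1}$; the Thom--Porteous formula then yields $[\tilde V_s] = c_{r_i-r+1}(Q_{i-1}) = c_{s,\rr}$.

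Next I would partition the indices $\{1, \ldots, n\}$ into blocks indexed by $i = 1, \ldots, k+1$, with the $i$-th block consisting of those $s = n+1-r$ for $r \in (r_{i-1}, r_i]$. Within each block, the subvarieties $\{\tilde V_{n+1-r}\}_{r \in (r_{i-1}, r_i]}$ form an ascending-inclusion chain topped by $E_{n+1-r_i}$, and the restricted sequence $(\tilde\lambda_{n+1-r})_{r \in (r_{i-1}, r_i]}$ is weakly decreasing from top to bottom, compatibly with the partition condition of Theorem~\ref{t:multiplication}. Crucially, the defining equations of the $\tilde V$'s in distinct blocks involve disjoint matrix entries of $\liep$, namely only the corresponding diagonal block of $x$, so restricted to any fiber of $Y \to \Fl(\rr)$ the multi-contact locus factors as a product of block-wise loci. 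Applying Theorem~\ref{t:multiplication} within each block and reindexing, the differences of adjacent $\tilde\lambda$-values along the chain become precisely the exponents $m_a$ from the conjecture (for the $a$'s in the corresponding $c$-block), and the block-$i$ contribution evaluates to $\prod_{a \in \text{block}(i-1)} c_{a,\rr}^{m_a}$; taking the product over all blocks recovers $c_{1,\rr}^{m_1} \cdots c_{n,\rr}^{m_n}$.

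The principal obstacle is justifying the application of Theorem~\ref{t:multiplication} in this context: hypothesis \eqref{hypothesis} demands $Y^G$ finite with injective restriction $H^*_G Y \to H^*_G Y^G$, and this certainly fails, since $Y^T$ is a disjoint union of affine spaces indexed by $W/W_P$. My proposed remedy is to invoke $T$-equivariant localization: the restriction $H^*_G Y \hookrightarrow H^*_T Y \to H^*_T Y^T$ is injective by the GKM description of $H^*_T(G/P)$, and at each component of $Y^T$ the slice theorem identifies a $T$-invariant neighborhood with an affine space on which the block-wise factorization of the multi-contact locus lets Proposition~\ref{p:affine-nested} apply. A secondary subtlety is that $\tilde V_s$ is determinantal (and not an equivariant local complete intersection) in general; however, the defining section of $\tilde V_{n+1-r}$ lies naturally in $\Lambda^{r-r_{i-1}} Q_{i-1}$, and the nested structure of these sections within each block should allow one to present the block multi-contact locus as an equivariant complete intersection of the type required by Proposition~\ref{p:affine-nested}. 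Combining these block computations then completes the argument.
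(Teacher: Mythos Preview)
The statement you are addressing is a \emph{conjecture} in the paper, not a theorem; the paper establishes it only in the special case of the full flag variety (Example~\ref{e:fullflag}), where every $\tilde V_s = E_s$ is a divisor and $E = E_1 + \cdots + E_n$ is simple normal crossings, so Example~\ref{e:ncd} applies directly. For a general partial flag variety the statement remains open, and your proposal does not close the gap.

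The essential obstruction is the one the paper itself flags: for $s = n+1-r$ with $r_{i-1} < r < r_i$, the variety $\tilde V_s$ is a genuine determinantal locus of codimension $r_i - r + 1 > 1$, and such loci are \emph{not} local complete intersections (compare the remark after the definition of the $V_r$ that $V_r$ is not l.c.i.\ for $1 < r < n$). Your suggestion to view the defining section as lying in $\Lambda^{r-r_{i-1}} Q_{i-1}$ does not help: the vanishing locus of that section has expected codimension $\rk \Lambda^{r-r_{i-1}} Q_{i-1} = \binom{d_{i-1}}{r-r_{i-1}}$, which exceeds the actual codimension $r_i - r + 1$ except in trivial cases, so one does not obtain a complete intersection presentation to feed into Proposition~\ref{p:affine-nested}. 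Even granting the block factorization and the localization workaround for hypothesis~\eqref{hypothesis}, within each diagonal block you are reduced to exactly the situation of Conjecture~\ref{c:gln} for the smaller matrix space $M_{d_{i-1},d_{i-1}}$, and that conjecture is open: neither the e.l.c.i.\ hypothesis of Theorem~\ref{t:multiplication} nor the singular-locus bound of Theorem~\ref{t:smooth} holds for these determinantal varieties, and the required codimension equality $\codim \Cont^{\ge\lambda}(V_\bullet) = \sum m_i \codim V_i$ is precisely what one would need to verify. Your outline therefore reduces one open conjecture to another rather than proving either.
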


\begin{example}\label{e:fullflag}
We will show that the conjecture holds in the case of the full flag variety $Fl(n)=G/B$.  
Indeed, in this case $E=\phi^{-1}(V_{1}) = E_{n} + \cdots + E_{1}$ is a simple normal crossings divisor with $n$ irreducible components $E_i = \tilde{V}_i$.  (This is a special case of the remark on p.~\pageref{page:lift-mult}---lift $\phi$ to the multiplication map $\tilde\phi\colon G\times\lieb \to M_{n,n}$, and observe that $\tilde\phi^{-1}(V_{1})$ is defined by the vanishing of the product of the diagonal entries in $\lieb$.)  Moreover, $\tilde{\lambda} = \mm$ and $[E_i] = t_{n + 1 - i}$ under the isomorphism $H_G^* Y \cong H_B^* (\lieb) \cong H_T^* (\pt) = \Z[t_1, \ldots, t_n]$.  The conjecture now follows from Corollary~\ref{c:flag} and Example~\ref{e:ncd}. 
\end{example}

%%%%%%%%%%%%%%%%%%%%%%%%%%%%%%%%%%%%%%
\section{Final remarks}\label{rmks}%%%
%%%%%%%%%%%%%%%%%%%%%%%%%%%%%%%%%%%%%%

It would be interesting to extend the ideas of this paper to the case when $X$ has singularities.  In the case when $X$ has orbifold singularities, we suggest that, on the one hand, one should replace the 
equivariant cohomology $H^*_G X$ with the \define{equivariant orbifold cohomology ring} $H^*_{G, \orb} (X;\Q)$.  Orbifold cohomology was introduced by Chen and Ruan \cite{CRNew} and an algebraic version was developed by Abramovich, Graber and Vistoli \cite{AGVAlgebraic}. One may extend their definitions to define the equivariant version $H^*_{G, \orb} (X;\Q)$.
On the other hand, for $m \in \N \cup \{ \infty \}$, we suggest replacing the jet schemes $J_{m} X$ with the stack of \define{twisted jets} 
$\mathcal{J}_m \mathcal{X}$, as defined by Yasuda \cite{YasMotivic}. 

In the case when $X = X(\Sigma)$ is a simplicial toric variety corresponding to a fan $\Sigma$ in a lattice $N$, one can extend the ideas of Borisov, Chen and Smith \cite{BCSOrbifold} to show that  $H^*_{T, \orb} (X;\Q)$ is isomorphic to the deformed group $\Q[N]^{\Sigma}$ \cite{PSNote}.  
%: recall from \eqref{deformed} that the ring $\Q[N]^{\Sigma}$ has basis 
%$\{ y^v \mid v \in |\Sigma| \cap N \}$, where $|\Sigma|$ denotes the support of $\Sigma$,  and multiplication 
%\[
%y^u \cdot y^v = \left\{ \begin{array}{ll}
%y^{u + v} & \textrm{ if } u,v \in \sigma \textrm{ for some } \sigma \in \Sigma, \\
%0 & \textrm{ otherwise}.
%\end{array} \right.
%\]
%
On the other hand, an explicit description of the stacks $\mathcal{J}_m \mathcal{X}$ was given by the second author in \cite{StaMotivic}: roughly speaking, away from a closed substack of infinite codimension, the $J_{\infty}T$-orbits of $\mathcal{J}_{\infty} \mathcal{X}$ consist of cylinders $\{ C_v \mid v \in |\Sigma| \cap N \}$.  One expects that under the isomorphism $H^*_{T. \orb} (X;\Q) \cong \Q[N]^{\Sigma}$, the class $[\overline{C_v }]$ in $H^*_{T, \orb} (X;\Q)$ corresponds to $y^v$ in $\Q[N]^{\Sigma}$ for all $v \in |\Sigma| \cap N$.

More generally, we expect that our main results should extend to other situations.  For example, the evidence for Conjecture~\ref{c:gln} suggests that the hypotheses in Theorems~\ref{t:multiplication} and \ref{t:smooth} can be relaxed.  It would also be interesting to study spherical varieties in the spirit of Theorem~\ref{t:basis}, generalizing the example of toric varieties.  

We expect the higher-order equivariant multiplicities defined in \S\ref{s:multiplicities} to have interesting relationships with other singularity invariants.  Focusing on the local case, a natural question is this: do the sequences $\{e^T_{x,m}(V)\}$ and $\{e^T_{x,m}(V)\}$ always have well-defined limits as $m\to\infty$?  It should also be interesting to explore a connection between piecewise polynomials on fans and higher-order multiplicities for singular toric varieties, generalizing the work of Katz and Payne \cite{katz-payne}.

Finally, it would be useful to develop a version of this theory for varieties over an arbitrary field, using equivariant Chow groups.  The statements of our results make sense in this context, so we expect this should be possible; however, there are a few technical obstacles, since several of our proofs use analytic neighborhoods and the long exact sequence for Borel-Moore homology.

\bibliographystyle{amsplain}
\def\cprime{$'$}
\providecommand{\bysame}{\leavevmode\hbox to3em{\hrulefill}\thinspace}
\providecommand{\MR}{\relax\ifhmode\unskip\space\fi MR }
% \MRhref is called by the amsart/book/proc definition of \MR.
\providecommand{\MRhref}[2]{%
  \href{http://www.ams.org/mathscinet-getitem?mr=#1}{#2}
}
\providecommand{\href}[2]{#2}
%

%%%%%%%%%%%%%%%%%%%%%%%%%%%%%%%%%%%%%%%%%%%%%%%
%%%%%%%%%%%%%
%%%%%%%%%%%%%%%%%%%%%%%%%%%%%%%%%%

%%%%%%%%%%%%%%%%%%%%%%%%%%%%%%%%%%
\end{document}